\theoremstyle{plain}
\newtheorem{thm}{Theorem}[section]
\theoremstyle{plain}
\newtheorem{conj}[thm]{Conjecture}
\newtheorem{lem}[thm]{Lemma}
\theoremstyle{definition}
\newtheorem{defi}[thm]{Definition}
\theoremstyle{definition}
\newtheorem{example}[thm]{Example}
\theoremstyle{plain}
\newtheorem{prop}[thm]{Proposition}
\theoremstyle{plain}
\newtheorem{cor}[thm]{Corollary}
\theoremstyle{remark}
\newtheorem{notation}[thm]{Notation}
\theoremstyle{remark}
\newtheorem{remark}[thm]{Remark}
\newcommand{\Z}{\operatorname{Z}}
\newcommand{\CE}{\mathcal{E}}
\newcommand\bbhat[1]{%
	\savestack{\tmpbox}{\stretchto{%
			\scaleto{%
				\scalerel*[\widthof{\ensuremath{#1}}]{\kern-.6pt\bigwedge\kern-.6pt}%
				{\rule[-\textheight/2]{1ex}{\textheight}}
			}{\textheight}%
		}{0.5ex}}%
	\stackon[1pt]{#1}{\tmpbox}%
}
\newcommand{\PP}[0]{\ensuremath{\mathbb{P}}}
\newcommand{\CC}[0]{\ensuremath{\mathbb{C}}}
\newcommand{\ZZ}[0]{\ensuremath{\mathbb{Z}}}
\newcommand{\QQ}[0]{\ensuremath{\mathbb{Q}}}
\newcommand{\RR}[0]{\ensuremath{\mathbb{R}}}
\newcommand{\GG}[0]{\ensuremath{\mathbb{G}}}
\newcommand{\pp}{\mathfrak{p}}
\newcommand{\GL}[0]{\ensuremath{\operatorname{GL}}}
\newcommand{\CO}[0]{\ensuremath{\mathcal{O}}}
\newcommand{\Pic}[0]{\ensuremath{\operatorname{Pic}}}
\newcommand{\Cone}{\ensuremath{\operatorname{cone}}}
\newcommand{\Spec}{\operatorname{Spec}}
\newcommand{\ord}{\operatorname{ord}}
\newcommand{\rk}{\operatorname{rk}}
\newcommand{\vol}{\operatorname{vol}}
\newcommand{\Div}{\operatorname{Div}}
\newcommand{\Hom}{\operatorname{Hom}}
\newcommand{\Val}{\operatorname{Val}}
\newcommand{\TT}{\mathbb{T}}
\newcommand{\OL}[1]{\overline{#1}}
\newcommand{\N}{\operatorname{N}}
\newcommand{\hdeg}{\widehat{\operatorname{deg}}}
\let\@wraptoccontribs\wraptoccontribs
\begin{document}
	
	\title[]{Counting rational points on Hirzebruch--Kleinschmidt varieties over number fields}
	
	\author{Sebastián Herrero}
\address{Universidad de Santiago de Chile, Dept.~de Matem\'atica y Ciencia de la Computaci\'on, Av.~Libertador Bernardo O'Higgins 3363, Santiago, Chile, and ETH, Mathematics Dept., CH-8092, Z\"urich, Switzerland}
  \email{sebastian.herrero.m@gmail.com}
  
  \author{Tobías Martínez}
\address{Departamento de Matem\'aticas, Universidad T\'ecnica
  Fe\-de\-ri\-co San\-ta Ma\-r\'\i a, Av.~Espa\~na 1680, Valpara\'\i
  so, Chile}
\email{tobias.martinez@usm.cl}

\author{Pedro Montero}
\address{Departamento de Matem\'aticas, Universidad T\'ecnica
  Fe\-de\-ri\-co San\-ta Ma\-r\'\i a, Av.~Espa\~na 1680, Valpara\'\i so, Chile}  \email{pedro.montero@usm.cl}

	\begin{abstract}
		We study the asymptotic growth of the number of rational points of bounded height on smooth projective split toric varieties with Picard rank 2 over number fields, with respect to Arakelov height functions associated with big metrized line bundles. We show that these varieties can be naturally decomposed into a finite disjoint union of subvarieties, where explicit asymptotic formulas for the number of rational points of bounded height can be given. Additionally, we present various examples, including the case of Hirzebruch surfaces.
	\end{abstract}

\subjclass[2020]{14G05, 14G40 (primary), 14G10, 14M25 (secondary)}
 
	\maketitle


 
	
	\setcounter{tocdepth}{1}
	\tableofcontents
	
	\section{Introduction}

In~\cite{Sch79}, Schanuel considered the projective space~$\PP^n$ over a number field~$K$ with height function~$H([x_0,\ldots,x_n]):=\prod_{v\in \Val(K)} \max \{|x_i|_v\}$ and proved that the number 
$$N(\PP^n, H, B):=\#\{P\in \PP^n(K):H(P)\leq B\},$$
of rational points of height bounded by~$B>0$, has the following asymptotic behavior:
$$N(\PP^n, H,B)=CB^{n+1}+\left\lbrace \begin{array}{ll}
O(B \log B)	& \text{if } n=1, \\
O(B^n)	& \text{if } n>1,
\end{array}\right. \quad \text{as } B\rightarrow \infty, $$
where $C=C(K,n)$ is an explicit constant depending on the field~$K$ and the dimension~$n$. In this case, the height function~$H$ coincides with the one obtained by metrizing the line bundle $\CO_{\PP^n}(1)$ in the sense of \cite{Fran/Man/Tsch89}.
With this observation and certain calculations performed on a cubic surface by Manin in \cite[Appendix]{Fran/Man/Tsch89}, it was conjectured that on a Fano variety $X$ with a dense set of rational points $X(K)$, excluding some degenerate cases, when considering the height function $H$ induced by a natural metrization of the anticanonical line bundle~$-K_X$, the number~$N(X,H,B)$ of rational points of height bounded by~$B$ should satisfy
$$N(X,H,B)\sim C B(\log B)^{\operatorname{rk}\Pic(X) - 1} \quad \text{as }B\rightarrow \infty,$$ 
for some constant~$C>0$.

In 1996, Batyrev and Tschinkel \cite{BatyrevTschinkelRationalPoints} provided a counterexample to the first version of Manin's conjecture. Nowadays, the current expectation is as follows (see e.g. \cite[{\it Formule empirique} 5.1]{Peyre2003PHBTA} or \cite[Conjecture~6.3.1.5]{Arzhantsev_Derenthal_Hausen_Laface_2014CoxRings} for a detailed discussion and the precise definition of the relevant concepts).

\begin{conj}[Manin--Peyre]
Let $X$ be an almost Fano variety\footnote{Following~\cite[{\it D\'efinition}~3.1]{Peyre2003PHBTA}, an almost Fano variety is a smooth, projective, geometrically integral variety~$X$ defined over a field~$K$, with~$H^1(X, \CO_X) = H^2(X, \CO_X) = 0$, torsion-free geometric Picard group~$\Pic(X_{\overline{K}})$ and $-K_X$ big.} over
a number field $K$, with dense set of rational points $X(K)$, finitely generated~
$\Lambda_{\textup{eff}}(X_{\overline{K}})$ and trivial Brauer group $\operatorname{Br}(X_{\overline{K}})$. Let $H=H_{-K_X}$ be the anticanonical height
function, and assume that there is an open subset $U$ of $X$ that is the complement
of the weakly accumulating subvarieties on $X$ with respect to $H$. Then, there is a constant $C>0$  such that
$$N(U, H_{-K_X},B) \sim C B(\log B)^{\rk \operatorname{Pic}(X)-1} \quad \text{as } B\rightarrow \infty.$$
Moreover, the leading constant is of the form
$$C=\alpha(X)\beta(X)\tau_H(X),$$
where
\begin{align*}
       \alpha(X)&:=\frac{1}{\left(\rk \Pic(X)-1\right)!}\int_{\Lambda_{\textup{eff}}(X)^\vee} e^{-\left \langle -K_X,\mathbf{y}\right \rangle }\textup{d}\mathbf{y},  \\
       \beta(X)&:=\#H^1(\textup{Gal}(\overline{K}/K), \Pic(X_{\overline{K}})), 
\end{align*}
and~$\tau_H(X)$ is the Tamagawa number of $X$ with respect to $H$ (as defined by Peyre in~\cite[Section~4]{Peyre2003PHBTA}).
\end{conj}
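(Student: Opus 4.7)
The statement is the Manin--Peyre conjecture in its full (almost Fano) generality, which is open; no known strategy establishes it for arbitrary almost Fano varieties over number fields. The plan I would outline is therefore the blueprint that has succeeded in the special cases where the conjecture has been proved, together with an honest indication of where it breaks down in general.

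The first step is to transfer the counting problem to an analytic study of the height zeta function
$$
Z_U(s) := \sum_{P \in U(K)} H_{-K_X}(P)^{-s}.
$$
One aims to prove that $Z_U(s)$ converges absolutely for $\Re(s)>1$, admits a meromorphic continuation to some half-plane $\Re(s)>1-\delta$, and has a pole of order $r:=\rk\Pic(X)$ at $s=1$ whose leading Laurent coefficient equals $(r-1)!\,\alpha(X)\beta(X)\tau_H(X)$. A Tauberian theorem of Ikehara--Landau--Delange type then converts this into the asymptotic $N(U,H,B)\sim CB(\log B)^{r-1}$, and the identification of $C$ follows at the same time as the identification of the leading coefficient.

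The central obstacle, and the reason the conjecture remains open, is producing the meromorphic continuation of $Z_U$. Three frameworks have borne fruit. Harmonic analysis on the adelic points of a group acting on $X$ with open orbit, applied via Poisson summation, handles smooth projective toric varieties after Batyrev--Tschinkel, equivariant compactifications of vector groups after Chambert-Loir--Tschinkel, and certain flag varieties after Franke--Manin--Tschinkel and Strauch--Tschinkel. Universal torsors combined with lattice-point counting, in the style of Salberger, de la Bret\`eche, Browning, and Derenthal, address most del Pezzo surfaces. The Hardy--Littlewood circle method deals with smooth complete intersections of low degree relative to dimension. None of these methods is available on a generic almost Fano variety, and outside such structured settings there is no mechanism to force $Z_U(s)$ to have any meromorphic continuation beyond its abscissa of convergence.

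Two further difficulties are intrinsic to the conjecture. First, one must locate the correct open subset $U$: the original Batyrev--Tschinkel counterexample shows that weakly accumulating subvarieties can appear unexpectedly and cannot be read off from $\Pic(X)$ or $-K_X$ alone, and no general algorithm for producing $U$ from the geometry of $X$ is known. Second, matching the analytic leading coefficient with Peyre's measure-theoretic constant $\alpha(X)\beta(X)\tau_H(X)$ is delicate: in the harmonic-analysis framework, $\alpha(X)$ emerges from an integral over the dual effective cone that appears inside the Poisson formula, while $\beta(X)$ enters through local Galois cohomology of the N\'eron--Severi torus, but in torsor or circle-method approaches these ingredients must be assembled by hand from the Tamagawa construction. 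For the Picard-rank-$2$ split toric varieties treated in the present paper, the toric harmonic-analysis framework, refined by a decomposition of $X$ via Cox coordinates, supplies all of these ingredients and delivers the conjecture in this range; this is the route that the remainder of the paper develops.
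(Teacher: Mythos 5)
The statement you were asked about is stated in the paper as a \emph{conjecture} (Manin--Peyre); the paper offers no proof of it, and indeed none is known, so your refusal to claim a proof is the correct call. Your outline of the standard blueprint (height zeta function, meromorphic continuation past the abscissa, Tauberian theorem, identification of the leading Laurent coefficient with $\alpha(X)\beta(X)\tau_H(X)$) and of the three frameworks in which the continuation can actually be produced is accurate as a survey of the state of the art, and it matches in spirit what the paper does in its special case.

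One correction on your last paragraph, though: the present paper does \emph{not} run the Batyrev--Tschinkel adelic harmonic-analysis/Poisson-summation machinery, nor does it use Cox coordinates or universal torsors for its Picard-rank-$2$ results. Instead it exploits the explicit model of a Hirzebruch--Kleinschmidt variety as a projective bundle $\PP(\mathcal{E})\to\PP^{t-1}$ with split $\mathcal{E}$, writes the height zeta function of the good open subset $U$ as a sum over $Q\in\PP^{t-1}(K)$ of fiberwise projective-space zeta functions, and controls these via Arakelov-theoretic tools: the integral representation of $\xi_K$ and of $\Z_{\PP(V)}$ over the Arakelov Picard group (Geer--Schoof), the Poisson--Riemann--Roch formula, and explicit bounds on numbers of sections of Hermitian bundles, before invoking the Tauberian theorem. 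Moreover its open set $U$ is strictly larger than the dense toric orbit used by Batyrev--Tschinkel, and its results are finer than the conjecture predicts (asymptotics on each piece of the stratification $X\simeq U_d\sqcup U_{d-1}\sqcup\cdots\sqcup\PP^{t-1}$, for arbitrary big metrized line bundles rather than just $-K_X$). So your proposal is correct about the status of the conjecture, but misattributes the method by which the paper treats its special case.
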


There is also a conjecture, originating in the work of Batyrev and Manin~\cite{Bat/Man90}, concerning the asymptotic growth of the number
$$N(U,H_L,B)=\#\{P\in U(K):H_L(P)\leq B\},$$
when considering height functions~$H_L$ associated to big metrized line bundles~$L$ on a variety~$X$ as above, and for appropriate open subsets~$U\subseteq X$. More precisely, if we denote by $\tau \prec \sigma$ whenever $\tau$ is a face of a cone $\sigma$, then one defines  the following classical numerical invariants for a line bundle class~$L$ on $X$:
\begin{equation*}
    \begin{split}
       a(L)&:=\inf\{a\in \RR: aL+K_X\in \Lambda_{\textup{eff}}(X)\}, \\
       b(L)&:=\max\{\textup{codim}(\tau): a(L)L+K_X\in \tau\prec \Lambda_{\textup{eff}}(X)\},
    \end{split}
\end{equation*}
which measure the position of $L$ inside the effective cone~$\Lambda_{\textup{eff}}(X)$. With this notation, the more general version of the above conjecture states that there exists a constant~$C>0$ such that
\begin{equation}\label{eq Batirev-Manin conjecture}
  N(U,H_L,B)\sim CB^{a(L)}(\log B)^{b(L)-1} \quad \text{as }B\to \infty.  
\end{equation}
We refer the reader to~\cite{Bat/Tsch98} for extensions, and a conjectural description of the leading constant~$C$ in terms of geometric, cohomological and adelic invariants associated to $U$ and~$L$.

The above conjectures have been proven by various authors, either in specific examples or in certain families of varieties (see for instance~\cite{Peyre2003PHBTA}, \cite[Section~3]{Tschinkel03}, \cite{Bro07} and~\cite[Section~4]{Tsch008} for accounts of such results). 

The motivation for the present paper stems from the groundbreaking work of Batyrev and Tschinkel \cite{Bat/Tsch98}, who demonstrated the Manin--Peyre conjecture for smooth projective toric varieties using harmonic analysis techniques, with~$U$ the dense toric orbit and~$H$ the anticanonical height function. We also refer to~\cite{Bat/Tsch96} for a similar result concerning~\eqref{eq Batirev-Manin conjecture} (for some~$C>0$) on toric varieties with height functions associated to  big metrized line bundles.

In this paper, we focus on smooth projective split toric varieties with Picard rank 2 over a number field~$K$, and consider Arakelov height functions associated to  big metrized line bundles. Our results show that these varieties can be naturally decomposed into a finite disjoint union of subvarieties where explicit asympotic formulas for the number of rational points of bounded height can be given, in the spirit of Schanuel's work~\cite{Sch79}. Hence, in this setting, we go beyond the scope of the classical conjectures mentioned above. We achieve these results by using suitable algebraic models for such varieties and by performing explicit computations on the associated height zeta functions. 

In order to be more precise, we recall a geometric result due to Kleinschmidt~\cite{Kleinschmidt88} stating that all smooth projective toric varieties of Picard rank 2 are (up to isomorphism) of the form 
\[
X=\PP(\CO_{\PP^{t-1}}\oplus \CO_{\PP ^{t-1}}(a_1)\oplus\cdots\oplus\CO_{\PP ^{t-1}}(a_r)),
\]
where $r \geq 1$, $t \geq 2$, and $0 \leq a_1 \leq \cdots \leq a_r$ are integers. For computational convenience, we choose a different normalization and put
$$X_d(a_1, \ldots, a_r):=\PP(\CO_{\PP^{t-1}}\oplus \CO_{\PP ^{t-1}}(-a_r)\oplus\cdots\oplus\CO_{\PP ^{t-1}}(a_{r-1}-a_r))\simeq X,$$
with $d:= r + t - 1$ the dimension of~$X_d(a_1, \ldots, a_r)$. We refer to these varieties as \emph{Hirzebruch--Kleinschmidt varieties}.

If~$X=X_d(a_1, \ldots, a_r)$, then we consider the projective subbundle~$F:=\PP(\CO_{\PP ^{t-1}}(-a_r)\oplus\cdots\oplus\CO_{\PP ^{t-1}}(a_{r-1}-a_r))$ and note that~$F\simeq X_{d-1}(a_1, \ldots, a_{r-1})$ when~$r>1$, while $F\simeq \PP^{t-1}$ when~$r=1$. Then, we define the \emph{good open subset} of~$X$ as
$$U_d(a_1, \ldots, a_r):=X\setminus F.$$
We note that this open subset is larger than the dense toric orbit of~$X$.

Our main result gives an asymptotic formula for~$N(U_d(a_1, \ldots, a_r),H_L,B)$ of the form~\eqref{eq Batirev-Manin conjecture}, with an explicit constant~$C=C_{L,K}$, for every big line bundle class~$L\in \Pic(X)$ that we equip with a ``standard metrization''. This general result is given in Section~\ref{subsection:general case} (see Theorems~\ref{thm:generalVersionTheorem} and~\ref{thm:degenerate_case_general_theorem}). For simplicity, we present here the result for~$L=-K_X$, noting that the anticanonical line bundle on smooth projective toric varities is always big.

	\begin{thm}\label{main thm anticanonical}
		Let $X=X_d(a_1,\ldots,a_r)$ be a Hirzebruch--Kleinschmidt variety  of dimension $d=r+t-1$ over a number field $K$, let~$H=H_{-K_X}$ denote the anticanonical height function on~$X(K)$, and let~$U=U_d(a_1,\ldots,a_r)$ be the good open subset of~$X$. Then, we have
        $$N(U,H,B) \sim C B\log(B) \quad \text{as }B\to \infty,$$
     with
     \begin{equation}\label{eq constant main thm anticanonical}
       C:=\frac{R_K^2h_K^2|\Delta_K|^{-\frac{(d+2)}{2}}}{w_K^2 (r+1) (t+(r+1)a_r-|\mathbf{a}|) \xi_K(r+1)\xi_K(t)},  
     \end{equation}
          where~$R_K,h_K,\Delta_K$ are the regulator, class number and discriminant of~$K$, respectively, $|\mathbf{a}|:=\sum_{i=1}^ra_i$ and
     $$\xi_K(s):=\left(\frac{\Gamma(s/2)}{2\pi^{s/2}}\right)^{r_1}\left(\frac{\Gamma(s)}{(2\pi)^{s}}\right)^{r_2}\zeta_K(s),$$
     with~$r_1$ and $r_2$ the number of real and complex Archimedean places of~$K$, respectively, and~$\zeta_K$ the Dedekind zeta function of~$K$.
     \end{thm}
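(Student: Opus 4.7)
The plan is to obtain Theorem~\ref{main thm anticanonical} as the specialization $L=-K_X$ of the general asymptotic formula \eqref{eq Batirev-Manin conjecture}, via an explicit Cox-coordinate parametrization of $U(K)$ together with an analysis of the associated height zeta function. Since $X=X_d(a_1,\ldots,a_r)$ is a smooth split toric variety of Picard rank~$2$, its Cox ring is a polynomial ring $K[y_1,\ldots,y_t,x_0,\ldots,x_r]$ with a $\mathbb{Z}^2$-grading coming from $\Pic(X)=\mathbb{Z}\xi\oplus\mathbb{Z}\eta$, where $\eta=\pi^{*}[\CO_{\PP^{t-1}}(1)]$ and $\xi=[\CO_{\PP(E)}(1)]$. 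Points in $U(K)$ correspond to $(K^{\times})^{2}$-orbits of pairs $(\mathbf{y},\mathbf{x})$ with $\mathbf{y}\in K^{t}\setminus\{0\}$ and $\mathbf{x}\in K^{r+1}$ satisfying $x_{0}\neq 0$ (the latter being equivalent to $P\notin F$). Following Schanuel, I would index orbits by fractional ideal classes and lift each class to an integral representative in order to reduce the problem to lattice-point counting.

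Writing $-K_X=(r+1)\xi+(t+(r+1)a_r-|\mathbf{a}|)\eta$, the anticanonical height with the standard metrization factorizes over the places of $K$ as a product of local maxima of monomials in $(\mathbf{y},\mathbf{x})$, with weights determined by the $a_i$. The condition $x_0\neq 0$ lets one ``dehomogenize'' the $\mathbf{x}$-coordinates and reduces the count to two nested Schanuel-type problems: first choose a projective class $[\mathbf{y}]\in\PP^{t-1}(K)$, then count lifts $\mathbf{x}$ with bounded (twisted) height depending on $[\mathbf{y}]$. Each of the two counts contributes a simple pole at $s=1$ of its Dirichlet series, yielding the factors $\xi_K(t)$ and $\xi_K(r+1)$ in the denominator of the constant, while the combinatorial prefactor $(r+1)(t+(r+1)a_r-|\mathbf{a}|)$ arises as the product of the two ``linear'' coefficients appearing in the anticanonical class expansion.

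The height zeta function $Z(s)=\sum_{P\in U(K)}H_{-K_X}(P)^{-s}$ would then be shown to admit meromorphic continuation past $\mathrm{Re}(s)=1$ with a double pole at $s=1$ whose leading coefficient $C$ equals \eqref{eq constant main thm anticanonical}, and the asymptotic $N(U,H_{-K_X},B)\sim CB\log B$ follows by a standard Tauberian theorem of Landau--Delange type. The factor $|\Delta_K|^{-(d+2)/2}$ and the term $(R_K h_K/w_K)^{2}$ would emerge from the Archimedean local densities and from the class group/unit lattice combinatorics, respectively, exactly as in Schanuel's original computation but applied twice---once for the ``base'' coordinates $\mathbf{y}$ (contributing $d_1=t$ archimedean exponents per embedding) and once for the ``fiber'' coordinates $\mathbf{x}$ (contributing $d_2=r+1$), whose sum is $d+2$.

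The main technical obstacle will be a careful bookkeeping of the constant: one must align (i) the combinatorics of the effective cone $\Lambda_{\textup{eff}}(X)$, (ii) the global contributions from $\Pic(\CO_K)$ and $\CO_K^{\times}$, and (iii) the local Archimedean integrals producing the Gamma-factors in $\xi_K(r+1)\xi_K(t)$. A further subtlety is that the inequality conditions defining the height bound on the lifts $(\mathbf{y},\mathbf{x})$ do not factor cleanly due to the $(K^{\times})^{2}$ identification; this will be handled via a two-step M\"obius inversion, or equivalently by a judicious choice of fundamental domain for $\CO_K^{\times}\times\CO_K^{\times}$, so that the iterated Schanuel counts combine to give precisely the stated constant.
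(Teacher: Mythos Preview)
Your plan takes a genuinely different route from the paper. The paper does \emph{not} use Cox coordinates, lattice-point counting, or M\"obius inversion. Instead, it deduces Theorem~\ref{main thm anticanonical} as the specialization $L=-K_X$ (so $\lambda=r+1$, $\mu=(r+1)a_r+t-|\mathbf{a}|$, hence $\lambda_L=\mu_L=1$) of a general result (Theorems~\ref{thm:generalVersionTheorem} and~\ref{thm:degenerate_case_general_theorem}) proved via Arakelov geometry. Concretely, for each $Q\in\PP^{t-1}(K)$ the paper writes the fiber contribution to the height zeta function as $\Z_{\PP(\mathscr{W}_Q)}(\lambda s)-\Z_{\PP(\mathscr{Y}^\vee_Q)}(\lambda s)$, expresses each such zeta function as an integral over the Arakelov class group $\Pic(K)$ involving the theta-like invariants $\varphi(D\otimes\overline{E})=e^{h^0(D\otimes\overline{E})}-1$, applies the Poisson--Riemann--Roch formula for Hermitian vector bundles, and isolates the singular behaviour by splitting into five explicit summands $F_1,\ldots,F_5$. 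The Gamma factors in $\xi_K$ arise from Gaussian integrals coming from the $\ell^2$ (Fubini--Study) metric, not from lattice-point volume computations. Your universal-torsor approach is in principle viable and is closer in spirit to Salberger's treatment of toric varieties; it has the advantage of being more combinatorial and of making the Cox-ring structure transparent, whereas the paper's method gives cleaner meromorphic continuation and handles all big $L$ uniformly.

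There is, however, a concrete gap you must address before your strategy can reproduce the stated constant. You write that the anticanonical height ``factorizes over the places of $K$ as a product of local maxima of monomials''. That is the description of the $\ell^\infty$-metrized height, but the paper's ``standard height'' is the $\ell^2$ (Fubini--Study) height: at an archimedean place $\sigma$ the local factor on $\PP^n$ is $\sqrt{\sum_i |x_i|_\sigma^2}$, not $\max_i|x_i|_\sigma$ (see Example~\ref{prop:degO(1)} and Remark~\ref{remark:metrization}). With $\ell^\infty$ you would obtain a different leading constant---the archimedean densities would involve volumes of cubes rather than the Gamma factors appearing in $\xi_K(r+1)\xi_K(t)$. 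If you insist on lattice-point counting, you must either (i) carry the $\ell^2$ norm through the Schanuel-type argument, which substantially complicates the fundamental-domain geometry (the region is no longer a polytope), or (ii) work with $\ell^\infty$ and then convert the constant at the end via a comparison of adelic measures. Either way, the phrase ``exactly as in Schanuel's original computation but applied twice'' is too optimistic: the twist by the weights $a_i$ couples the base and fiber counts, and the precise mechanism producing the factor $|\Delta_K|^{-(d+2)/2}$ and the product $\xi_K(r+1)\xi_K(t)$ has to be worked out, not asserted.
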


\begin{remark}
In Section~\ref{subsection Peyre's alpha} we show that
$$\alpha(X)=\frac{1}{(r+1) (t+(r+1)a_r-|\mathbf{a}|)}.$$
Also, since $X$ is a split toric variety, we have $\beta(X)=1$ (this is well-known and follows e.g.~from~\cite[Remark~1.7 and Corollary~1.18]{Bat/Tsch98} and~\cite[Lemma~2.21]{PRR23}). Hence, Theorem~\ref{main thm anticanonical} together with the main theorem of~\cite{Bat/Tsch98} implies that the Tamagawa number of~$X$ with respect to the anticanonical height function~$H_{-K_X}$ is
$$\tau_H(X)=\frac{R_K^2h_K^2|\Delta_K|^{-\frac{(d+2)}{2}}}{w_K^2  \xi_K(r+1)\xi_K(t)}.$$
\end{remark}
    
     Since the good open subset~$U$ in Theorem~\ref{main thm anticanonical} is obtained by removing from~$X_d(a_1,\ldots,a_r)$ the subbundle~$F$, which is either another Hirzebruch--Kleinschmidt variety or a projective space, we can also describe the asymptotic behaviour of the number~$N(F,H,B)$, provided the restriction~$-K_X|_F$ is big in~$\Pic(F)$ (it is easily seen that when this is not the case, one has~$N(F,H,B)=\infty$ for every~$B\geq 1$). In general, the restriction~$-K_X|_F$ does not coincide with the anticanonical class~$-K_F$ of~$F$ (see Section~\ref{sec restriction of divisors}), and this is the main reason why we are lead to study the asymptotic behaviour of~$N(U,H_L,B)$ for height functions~$H_L$ associated to general big line bundle classes. These ideas are illustrated in the following example.

     \begin{example}\label{example HK threefold}
     Given integers~$0\leq a_1\leq a_2$ consider the Hirzebruch--Kleinschmidt threefold
         $$X:=X_3(a_1,a_2)=\PP(\CO_{\PP^{1}}\oplus \CO_{\PP ^{1}}(-a_2)\oplus\CO_{\PP ^{1}}(a_1-a_2)),$$
         with projection map~$\pi:X\to \PP^{1}$ and good open subset~$U=U_3(a_1,a_2)$. 
         Using results from Sections~\ref{subsection effective divisors} and~\ref{sec restriction of divisors}, we get that~$-K_X=\CO_X(3)\otimes \pi^*(\CO_{\PP^1}(2+2a_2-a_1))$, and the restriction~$-K_X|_F$ of the anticanonical divisor of~$X$ to the projective subbundle~$F=\PP( \CO_{\PP ^{1}}(-a_2)\oplus\CO_{\PP ^{1}}(a_1-a_2))$ corresponds, under the isomorphism~$F\simeq X_2(a_1)=:X'$, to the line bundle~$L:=\CO_{X'}(3)\otimes (\pi')^*(\CO_{\PP^1}(2+2a_1-a_2))$, where~$\pi':X'\to \PP^1$ is the corresponding projection map. If we write~$X'=U'\sqcup F'$ with~$U'=U_2(a_1)$ the good open subset of~$X'$ and~$F'$ its complement in~$X'$, then we have~$F'\simeq \PP^1$, and the restriction~$L|_{F'}$ corresponds to the line bundle~$M:=\CO_{\PP^1}(2-a_1-a_2)$. Hence, we have a disjoint decomposition
         \begin{equation}\label{eq decomposition of X first example}
            X\simeq U \sqcup U'\sqcup F' \simeq U_3(a_1,a_2) \sqcup U_2(a_1) \sqcup \PP^1, 
         \end{equation}
         and denoting by~$H=H_{-K_X}$ the anticanonical height function, we obtain 
         $$N(U,H,B)\sim CB\log(B) \quad \text{as }B\to \infty,$$
         with an explicit constant~$C>0$ by Theorem~\ref{main thm anticanonical}. Now, by Lemma~\ref{lem:restrictionBigDivisor} in Section~\ref{sec restriction of divisors} the line bundle~$L$ is big if and only if~$a_2<2a_1+2$, in which case Theorem~\ref{thm:generalVersionTheorem} in Section~\ref{section:rational points} gives
         $$N(U',H,B)=N(U_2(a_1),H_L,B)\sim C' B^{\frac{a_1+2}{2a_1+2-a_2}} \quad \text{as }B\to \infty,$$
         with another explicit constant~$C'>0$. Finally, by Lemma~\ref{lem:restrictionBigDivisorII} $M$ is big if and only if~$a_1+a_2<2$, in which case Schanuel's estimate, in the form of Corollary~\ref{cor:SchanuelTheorem} in Section~\ref{sec: height zeta function projective}, gives
         $$N(F',H,B)=N(\PP^1,H_M,B)\sim C''B^{\frac{2}{2-a_1-a_2}} \quad \text{as }B\to \infty,$$
         with yet another explicit constant~$C''>0$. We can then distinguish several different cases in order to compare the contribution of each of the subsets~$U,U',F'$ in the decomposition~\eqref{eq decomposition of X first example} to the asymptotic growth of the number of rational points of bounded anticanonical height on~$X$, as represented in the following table.


 \begin{center}
         \begin{tabular}{|c|c|c|c|}
         \hline 
         \textbf{Case} & \textbf{Is~$L$ big?} & \textbf{Is~$M$ big?}  & \textbf{Comparison}\\
         \hline 
         $(a_1,a_2)=(0,0)$ &  Yes & Yes & \begin{minipage}{5.5cm}
                                     $N(F',H,B)=o(N(U',H,B))$  \\
                                     $N(U',H,B)=o(N(U,H,B))$ \\
                                     $N(U,H,B)\sim C B\log(B)$  
                                     \end{minipage} \\
                                  \hline
         $(a_1,a_2)=(0,1)$ &  Yes & Yes &  
                \begin{minipage}{5.5cm}
                $N(U,H,B)=o(N(U',H,B))$\\
                $N(U',H,B)\sim C' B^2$\\
                $N(F',H,B)\sim C'' B^2$
                 \end{minipage}
        \\
          \hline
         $1\leq a_1<a_2<2a_1+2$ & Yes & No & 
         \begin{minipage}{5.5cm} 
         $N(F',H,B)=\infty$\\
         $N(U,H,B)=o(N(U',H,B))$\\
         $N(U',H,B)\sim C' B^{\frac{a_1+2}{2a_1+2-a_2}}$
         \end{minipage} \\
         \hline
         $1\leq a_1=a_2 $ &  Yes & No & 
         \begin{minipage}{5.5cm}
         $N(F',H,B)=\infty$\\
         $N(U',H,B)=o(N(U,H,B))$ \\
         $N(U,H,B)\sim CB\log(B)$
         \end{minipage}\\
         \hline
          $2a_1+2\leq a_2$ &  No & No & \begin{minipage}{5.5cm}$N(F',H,B)=\infty$\\
          $N(U',H,B)=\infty$\\
          $N(U,H,B)\sim CB\log(B)$
          \end{minipage}\\
         \hline
                 \end{tabular}
                 \end{center}

     
         Since the values~$r=t=2$ are fixed, the constants~$C,C',C''$ above depend only on the base field~$K$ and on the coefficients~$a_1,a_2$. In order to give a concrete numerical example, let us for simplicity assume~$K=\QQ$ and choose~$(a_1,a_2)=(0,1)$. Using that~$\xi_{\mathbb{Q}}(s)=(2\pi^{s/2})^{-1}\Gamma(s/2)\zeta(s)$, we get
         $$C=\frac{\pi^{2}}{6\zeta(3)\zeta(2)}=\frac{1}{\zeta(3)}=0.83190737\ldots,$$
         while the values of the constants~$C'$ and~$C''$ can be extracted from Example~\ref{example product of projective spaces} and Corollary~\ref{cor:SchanuelTheorem}, and are given by
         $$C'=\frac{3}{\pi}\left(1+\frac{945\zeta(3)}{16\pi^3}\right) = 3.14147564\ldots, \quad C''=\frac{\pi}{2\zeta(2)}=\frac{3}{\pi}=0.95492965\ldots.$$
         In particular, this shows that~$U'$ ``contributes more'' than~$F'$ to the number of rational points of bounded anticanonical height on~$X_3(0,1)$.
           \end{example}

In the general case, our strategy to study the number of rational points of bounded height on a Hirzebruch--Kleinschmidt varierty is as follows: Given~$X=X_d(a_1,\ldots,a_r)$ 
we start by writing
\begin{equation}\label{eq natural decomposition HK}
  X\simeq U_d(a_1,\ldots,a_r)\sqcup U_{d-1}(a_1,\ldots,a_{r-1})\sqcup \cdots \sqcup U_{t}(a_1) \sqcup \PP^{t-1}.  
\end{equation}
Then, starting with the anticanonical height function~$H_{-K_X}$ on~$X(K)$, we give simple criteria to decide if the induced height functions~$H_i$ on~$U_{t+i-1}(a_1,\ldots,a_{i})(K)$ (for~$1\leq i\leq r$), and~$H_{0}$ on~$\PP^{t-1}(K)$, are associated to big line bundle classes in the corresponding Picard groups. Finally, using Theorems~\ref{thm:generalVersionTheorem} and~\ref{thm:degenerate_case_general_theorem} in Section~\ref{subsection:general case}, together with Schanuel's estimate (Corollary~\ref{cor:SchanuelTheorem} in Section~\ref{sec: height zeta function projective}), we can give explicit asymptotic formulas for the numbers~$N(U_{d+i-r}(a_1,\ldots,a_{i}),H_i,B)$ and~$N(\PP^{t-1},H_0,B)$, obtaining the counting of rational points of bounded height on each piece of the decomposition~\eqref{eq natural decomposition HK}. This approach also works if we start with a height function~$H_L$ on~$X(K)$ associated to any~$L\in \Pic(X)$ big.

\begin{remark}
 Assume~$a_1=\ldots=a_j=0$ and~$0<a_{j+1}\leq \ldots \leq a_r$, for some~$j\in \{1,\ldots,r-1\}$. In this case, instead of~\eqref{eq natural decomposition HK}, one can rather write
 \begin{equation}\label{eq natural decomposition HKII}
    X\simeq U_d(a_1,\ldots,a_r)\sqcup U_{d-1}(a_1,\ldots,a_{r-1})\sqcup \cdots \sqcup U_{t+j}(a_1,\ldots,a_{j+1}) \sqcup (\PP^{t-1}\times \PP^j), 
 \end{equation}
 and proceed as above by giving explicit asymptotic formulas for the numbers~$N(U_{d+i-r}(a_1,\ldots,a_{i}),H_i,B)$ (for~$j+1\leq i\leq r$) and~$N(\PP^{t-1}\times \PP^j,H_j,B)$. This variant seems more natural to us in this case since there is no gain in removing a closed subvariety of~$X_{t+j-1}(a_1,\ldots,a_j)=\PP^{t-1}\times \PP^j$ when counting rational points of bounded height on this particular component of the decomposition~\eqref{eq natural decomposition HKII}. For instance, in the case of~$X_3(0,1)$ discussed in Example~\ref{example HK threefold}, one can write~$X_3(0,1)\simeq U_3(0,1)\sqcup (\PP^1\times \PP^1)$ and compute~$N(\PP^1\times \PP^1,H_L,B)$ directly using Theorem~\ref{thm:degenerate_case_general_theorem} in Section~\ref{subsection:general case}.
\end{remark}

We refer the reader to Section~\ref{subsection hirzebruch surfaces} where we apply the above strategy to the case of Hirzebruch surfaces~$X=X_2(a)$ with~$a>0$ an integer. In particular, we recover a classical example going back to Serre~\cite{Serre1989}, and revisited by Batyrev and Manin in~\cite{Bat/Man90} and by Peyre in~\cite{Pey02} (see Remark~\ref{remark Serre and BM}).

Our proof of Theorem~\ref{main thm anticanonical} (and its extension to general big line bundles) is based on the analytic properties of the associated height zeta functions, which we relate to height zeta functions of projective spaces and to the zeta function~$\xi_K$ of the base field~$K$. Then, as usual, a direct application of a Tauberian theorem leads to the desired results. The analytic continuation and identification of the first pole of our height zeta functions are achieved through explicit computations and by exploiting the concrete algebraic models of our Hirzebruch--Kleinschmidt varieties. As such, it would be interesting to investigate whether the techniques used in this paper can be applied to other families of algebraic varieties.

\begin{remark}
\begin{enumerate}
\item Most of the geometric ideas presented in this work, particularly the 
decompositions~\eqref{eq natural decomposition HK} and~\eqref{eq natural decomposition HKII}, stem from our research on the analogous problem in the case of Hirzebruch--Kleinschmidt varieties over global function fields, which is studied in detail in the companion paper~\cite{HMM24b}.
\item When working on this project, we came across an unpublished manuscript by Maruyama~\cite{Maruyama2015}, which proposes an asymptotic formula for the number of rational points of bounded anticanonical height on Hirzebruch surfaces. Unfortunately, the proposed result is incorrect due to convergence issues with the relevant height zeta function that occur when one does not remove the corresponding subbundle~$F$ of the Hirzebruch surface~$X_2(a)$, as we have done here in greater generality. Nevertheless, we remark that the approach used in loc.~cit.~to study the analytic properties of height zeta functions of projective bundles has served as an inspiration for the present work (see Section~\ref{sec: height zeta function projective}).
\end{enumerate}
\end{remark}

This paper is organized as follows. In Section~\ref{sec preliminaries} we establish notation, which in most cases is classical in number theory and algebraic geometry. We then provide a brief introduction to toric varieties and toric vector bundles, including the construction of the fan of the projectivization of a toric bundle. This construction is used in Section~\ref{sec HK} to compute relevant geometric invariants of Hirzebruch--Kleinschmidt varieties, including a description of all big line bundles. In Section~\ref{sec hermitian vector bundles} we revisit the theory of Hermitian vector bundles over arithmetic curves and the notion of Arakelov degree. Additionally, we state the Poisson--Riemann--Roch formula, and demonstrate several estimates on the number of non-zero sections of a Hermitian vector bundle that are key in our proofs. In Section~\ref{sec: height zeta function projective} we define a ``standard height function'' on the set of rational points of the projective space $\PP^n$, in terms of the Arakelov degree of tautological Hermitian line bundles. With these ideas in mind, we revisit Maruyama's proof of Schanuel's estimate on the number of rational points of bounded standard height on projective spaces (see Corollary \ref{cor:SchanuelTheorem}). Finally, in Section~\ref{section:rational points}, we proceed to state and prove our main results on the asymptotic growth of the number of rational points of bounded height on Hirzebruch--Kleinschmidt varieties, with respect to height functions associated to big line bundles (Theorems~\ref{thm:generalVersionTheorem} and~\ref{thm:degenerate_case_general_theorem}), and in Section~\ref{subsection hirzebruch surfaces} we apply these theorems to Hirzebruch surfaces in order to further illustrate the scope of our results. 

\section*{Acknowledgments}

The second author thanks \textsc{Dan Loughran} for a warm reception during his academic stay at the University of Bath, and for useful discussions and comments. The authors thank \textsc{Giancarlo Lucchini-Arteche} for helpful comments on the first Galois cohomology group of an algebraic variety.

S.~Herrero's research is supported by ANID FONDECYT Iniciaci\'on grant 11220567 and by SNF grant CRSK-2{\_}220746. T.~Martinez's research is supported by ANID FONDECYT Iniciaci\'on grant 11220567, CONICYT-PFCHA Doctorado Nacional 2020-21201321, and by UTFSM/DPP Programa de Incentivo a la Investigación Cient\'ifica (PIIC) Convenio N°026/2022. P.~Montero's research is supported by ANID FONDECYT Regular grants 1231214 and 1240101.

 \section{Preliminaries}\label{sec preliminaries}

 \subsection{Basic notation}\label{subsection:Notation}
	Throughout this article we let~$K$ denote a number field of degree~$n_K$ over~$\mathbb{Q}$. Associated to~$K$ we have the following objects:
	\begin{itemize}
		\item The ring of integers~$\mathcal{O}_K$ and the associated arithmetic curve~$S:=\operatorname{Spec}(\CO_K)$.
		\item The number~$w_K$ of roots of unity in~$K$.
		\item The discriminant~$\Delta_K$, regulator~$R_K$ and class number~$h_K$.
		\item The set of discrete valuations~$\Val_f(K)$, which is in bijection with the set of non-zero prime ideals~$\pp\subset \mathcal{O}_K$.
        \item The set~$\Sigma_K$ of field embeddings~$K\hookrightarrow \CC$, and $r_1,r_2$ the number of real and complex Archimedean places, respectively. In particular, $\#\Sigma_K=n_K=r_1+2r_2$.
		\item Given~$v\in \Val_f(K)$ we denote by~$K_v$ the corresponding completion, and for~$x\in K_v$ we put~$|x|_v:=|\mathrm{Nr}_{K_v|\QQ_p}(x)|_p$ where~$p$ is the unique prime associated to the restriction of~$v$ to~$\mathbb{Q}$, and~$|\cdot|_p$ denotes the standard $p$-adic norm (namely, with~$|p|_p=p^{-1}$). Similarly, given~$\sigma\in \Sigma_K$ we denote by~$K_\sigma$ the completion of~$K$ with respect to the norm~$|x|_{\sigma}:=|\sigma(x)|$ where~$|\cdot |$ stands for the usual Euclidean norm on~$\CC$ (namely, $|x|=\sqrt{x\overline{x}}$). 
  With this normalization, the product formula
  $$\prod_{v\in \Val_f(K)}|x|_v \prod_{\sigma \in \Sigma_K}|x|_\sigma=1$$
  holds for every~$x\in K, x\neq 0$.
		\item $\eta: \Spec(\CO_K)\to \Spec(\ZZ)$ is the morphism of schemes induced by the inclusion~$\ZZ\hookrightarrow \mathcal{O}_K$.
        \end{itemize}

For a vector~$\mathbf{a}=(a_1,\ldots,a_r)\in \mathbb{N}_0^r$, we write $|\mathbf{a}|:=\sum_{i=1}^r a_i$.

In this article all varieties will be assumed to be irreducible, reduced and separated schemes of finite type over the base number field $K$. For simplicity we write~$\mathbb{A}^n$ and~$\mathbb{P}^{n}$ for the affine and projective space of dimension~$n$ over~$K$, and products of varieties are to be understood as fiber products over~$\mathrm{Spec}(K)$. The sheaf of regular functions on a variety~$X$ is denoted by~$\mathscr{O}_X$.

For a $K$-vector space~$V$, we denote by~$\mathbb{P}(V)$ the projective space of lines in~$V$. 
        
\subsection{A Tauberian theorem}

As mentioned in the Introduction, our main results 
follow from the analytic properties of certain height zeta functions, by using a  Tauberian theorem. We will use the following formulation, which follows from~\cite[Th\'eor\`eme~III]{DelangeTauberianThm}.
 
 \begin{thm}
 	Let $X$ be a countable set, $H:X\to \RR^{+}$ a function, and suppose that
 	$$\Z(s):=\sum_{x\in X} 
 	H(x)^{-s}$$
 	is absolutely convergent for $\Re(s)>a>0$ and
 	$$\Z(s)=\frac{g(s)}{(s-a)^b},$$ where $b$ is a positive integer and $g(s)$ is a holomorphic function in the half-plane $\Re(s)>a-\varepsilon$, for some~$\varepsilon>0$, with~$g(a)\neq 0$. Then, for every~$B>0$ the cardinality
 	$$N(X,H,B):=\#\{x\in X:H(x)\leq B\}$$
 	is finite, and
 	$$N(X,H,B)\sim \frac{g(a)}{(b-1)!\, a}B^a (\log B)^{b-1} \quad \text{as }B\to \infty.$$
 	\label{thm:tauberianThm}
 \end{thm}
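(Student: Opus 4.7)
The plan is to recognize $Z(s)$ as a Laplace--Stieltjes transform of the counting function $N(X,H,B)$ and to reduce the statement directly to the Tauberian theorem of Delange already cited in the hypothesis~\cite{DelangeTauberianThm}.

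First I would check that $N(X,H,B)$ is indeed finite for every $B>0$. Fix any $s_0 > a$; since the series defining $Z(s_0)$ converges absolutely, we have
$$N(X,H,B) = \sum_{H(x)\leq B} 1 \leq \sum_{H(x)\leq B} \left(\frac{B}{H(x)}\right)^{s_0} \leq B^{s_0}\, Z(s_0) < \infty.$$
Hence $N(X,H,\cdot)$ is a well-defined, non-decreasing, right-continuous step function, and $Z(s)$ may be written as a Stieltjes integral
$$Z(s) = \int_{0}^{\infty} t^{-s}\, dN(X,H,t).$$

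Next I would perform the change of variables $t=e^u$ in order to put this in standard Laplace--Stieltjes form. Setting $\beta(u):=N(X,H,e^{u})$, which is a non-decreasing function of $u\in\RR$, we obtain
$$Z(s) = \int_{-\infty}^{\infty} e^{-su}\, d\beta(u).$$
By hypothesis this integral converges for $\Re(s)>a>0$, and $Z(s)=g(s)/(s-a)^{b}$ extends meromorphically to $\Re(s)>a-\varepsilon$ with a single pole of integer order $b$ at $s=a$ and $g(a)\neq 0$. This is precisely the setting of~\cite[Th\'eor\`eme III]{DelangeTauberianThm}, which yields
$$\beta(u) \sim \frac{g(a)}{a\,\Gamma(b)}\, e^{au}\, u^{b-1} \quad \text{as } u\to\infty.$$

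Finally, substituting $u=\log B$ and using $\Gamma(b)=(b-1)!$ recovers the claimed asymptotic
$$N(X,H,B) \sim \frac{g(a)}{a\,(b-1)!}\, B^{a}\,(\log B)^{b-1} \quad \text{as } B\to\infty.$$
The only nontrivial analytic ingredient is the treatment of a pole of arbitrary integer order $b\geq 1$, which is what produces the $(\log B)^{b-1}$ factor; this is entirely encapsulated in Delange's theorem, so no further obstacle arises once the reformulation of $Z(s)$ as a Laplace--Stieltjes transform has been carried out.
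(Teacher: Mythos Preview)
Your proposal is correct and follows exactly the approach the paper indicates: the paper does not give a proof of this theorem at all, merely stating that it ``follows from~\cite[Th\'eor\`eme~III]{DelangeTauberianThm}'', and you have supplied the standard reduction (finiteness of $N(X,H,B)$, rewriting $Z(s)$ as a Laplace--Stieltjes transform, and invoking Delange) that makes this citation precise.
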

 
 \subsection{Toric varieties}
\label{subsec:ToricVarieties}
We refer the reader to \cite{CoxLittleSchenckToricVarieties} for the general theory of toric varieties. Here we fix the notation that will be used along the article.
 
	Let $N\simeq \mathbb{Z}^d$ be a rank~$d$ lattice and~$M=\Hom_\ZZ(N, \ZZ)$ its dual lattice. Let us denote by~$\TT=\Spec(K[M])\simeq \GG_m^d$ the corresponding split algebraic torus, where~$K[M]$ is the~$K$-algebra generated by~$M$ as a semigroup. We identify the lattice $M$ with the group of characters of the torus~$\TT$ and $N$ with the one-parameter subgroups of~$\TT$.

Let $\Sigma$ be a fan in~$N_\RR:=N\otimes \RR$. This is, $\Sigma$ is a finite collection of strongly convex, rational polyhedral
cones $\sigma\subset N_\RR$ containing all the faces of its elements, and such that for every~$\sigma_1,\sigma_2\in \Sigma$, the intersection~$\sigma_1\cap \sigma_2$ is a face of both~$\sigma_1$ and~$\sigma_2$ (hence it is also in~$\Sigma$). We denote by $\Sigma(1)$ the set of rays (i.e., one-dimensional cones) in~$\Sigma$. More generally, for $\sigma\in \Sigma$ we denote by $\sigma(1)=\sigma \cap \Sigma(1)$ the set of rays on $\sigma$ and, by abuse of notation, we identify  rays with their primitive generators, i.e., with the unique primitive element~$u_\rho \in N$ that generates the ray~$\rho\in \Sigma(1)$. Also, given vector~$v_1,\ldots,v_r\in N_\RR$ we denote by~$\operatorname{cone}(v_1,\ldots,v_r)$ the cone that they generate.

Given a cone $\sigma\in \Sigma$, its dual $\sigma^\vee:=\{m\in M:\langle m, n\rangle \geq 0 \textup{ for all } n\in \sigma\}$ is a cone in $M_\RR$ and $U_\sigma=\Spec(K[\sigma^\vee \cap M])$ is the associated affine toric variety. The toric variety $X_{\Sigma}$ associated to~$\Sigma$ is obtained by gluing the affine toric varieties $\{U_\sigma\}_{\sigma\in  \Sigma}$ along~$U_{\sigma_1}\cap U_{\sigma_2}\simeq U_{\sigma_1\cap \sigma_2}$. It is a normal and separated variety that contains a maximal torus $U_{\{0\}}\simeq \TT$ as an open subset and admits an effective regular action of the torus $\TT$ extending the natural action of the torus over itself.  The toric variety~$X_{\Sigma}$ is smooth if and only if~$\Sigma$ is regular, meaning that every cone in~$\Sigma$ is generated by vectors that are part of a basis of~$N$. 


On a toric variety~$X_{\Sigma}$, each ray~$\rho\in \Sigma(1)$ corresponds to a prime $\TT$-invariant Weil divisor $D_\rho$, and the classes of~$D_\rho$ with~$\rho \in \Sigma(1)$ generate the class group~$\operatorname{Cl}(X_{\Sigma})$. In particular, every Weil divisor $D$ on $X$ is linearly equivalent to $\sum_{\rho\in \Sigma(1)} a_\rho D_\rho$ for some integers $a_\rho \in \mathbb{Z}$. Similarly, the classes of the $\TT$-invariant Cartier divisor generate the Picard group~$\operatorname{Pic}(X_{\Sigma})$. If the fan~$\Sigma$ contains a cone of maximal dimension~$d=\mathrm{dim}_{\RR}(N_\RR)$, then~$\operatorname{Pic}(X_{\Sigma})$ is a free abelian group of rank~$\#\Sigma(1)-d$.

The relevant toric varieties appearing in this paper are all smooth. We recall that on smooth varieties every Weil divisor is Cartier, and in particular~$\operatorname{Cl}(X_{\Sigma})\simeq \operatorname{Pic}(X_{\Sigma})$.

\subsection{Toric vector bundles}
\label{subsec:ToricVectorBundles}
Let $X$ be an algebraic variety. A variety $V$ is a vector bundle of rank $r$ over $X$ if there is a morphism $\pi: V\rightarrow X$ and an open cover $\{U_i\}_{i\in I}$ of $X$ such that:
\begin{enumerate}
    \item For every $i\in I$, there exists an isomorphism
    $$\varphi_i:\pi^{-1}(U_i)\xrightarrow{\sim}U_i\times \mathbb{A}^r,$$
    such that $p_1\circ \varphi_i=\pi|_{\pi^{-1}(U_i)}$, where $p_1$ denotes the projection onto the first coordinate.
    \item For every pair $i,j\in I$ there exists $g_{ij}\in \GL_r(\mathscr{O}_X(U_i\cap U_j) )$ such that the following diagram is commutative:
\begin{center}
\begin{tikzcd}
&&(U_i\cap U_j)\times\mathbb{A}^r                              \\
\pi^{-1}(U_i\cap U_j)  \arrow[rrd, "\varphi_j|_{\pi^{-1}(U_i\cap U_j)}"'] \arrow[rru, "\varphi_i|_{\pi^{-1}(U_i\cap U_j)}"] &  &                    \\ &  & (U_i\cap U_j)\times \mathbb{A}^r \arrow[uu, "\operatorname{Id}\times g_{ij}"']
\end{tikzcd}
\end{center}
\end{enumerate}
 The data $\{(U_i,\varphi_i)\}_{i\in I}$ satisfying (1) and (2) is called a trivialization for $\pi:V\to X$. The $g_{ij}$ are called transition matrices, and $V_p:=\pi^{-1}(p)\simeq \mathbb{A}^r$ is called the fiber at $p\in X$.

 Let $\pi:V\to X$ be a vector bundle of rank $r$ and $\{(U_i,\phi_i)\}_{i\in I}$ be a trivialization with transition matrices $g_{ij}$. Then, the functions $\operatorname{Id}\times g_{ij}$ induce isomorphisms
 $$\operatorname{Id}\times \overline{g}_{ij}:(U_i\cap U_j)\times \PP^{r-1}\xrightarrow{\sim} (U_i\cap U_j)\times \PP^{r-1},$$
 where $\overline{g}_{ij}\in \operatorname{PGL}_r(\mathscr{O}_X(U_i\cap U_j))$ is the projective map induced by~$g_{ij}$. This gives gluing data for a variety $\PP(V)$ and $\pi$ induces a morphism
$$\overline{\pi}:\PP(V)\rightarrow X,$$
with trivializations $\{(U_i,\overline{\varphi}_i)\}_{i\in I}$ for $\PP(V)$ where $$\overline{\varphi}_i:\overline{\pi}^{-1}(U_i)\xrightarrow{\sim}  U_i\times \PP^{r-1}.$$ 
The algebraic variety~$\PP(V)$ constructed in this way is called the projective bundle associated to $V$.

If $\mathcal{E}$ is a locally free sheaf over~$X$ of rank $r$, then $\mathcal{E}$ is the sheaf of sections of a vector bundle $\pi_\mathcal{E}:V_\mathcal{E}\to X$ of rank $r$. In this case, we define the projectivization of $\mathcal{E}$ to be
\begin{equation*}
		\PP(\mathcal{E}):=\PP(V_\mathcal{E}^\vee),
		\label{eq:defProjectiveBundle}
	\end{equation*}
 where~$V_\mathcal{E}^\vee$ denotes the vector bundle dual to~$V_\mathcal{E}$. The projective bundle~$\PP(\mathcal{E})$ has fiber over $p\in X$ given by $\PP(\mathcal{E})_p=\mathbb{P}(W_p)$ where $W_p=(V_\mathcal{E}^\vee)_p$. 


	Let $X$ be a toric variety defined by a fan $\Sigma$ in $N_\mathbb{R}$ as in Section~\ref{subsec:ToricVarieties}. A toric vector bundle over $X$ is a vector bundle $\pi:V\rightarrow X$ such that the  action of $\TT$ on $X$ extends to an action on $V$ in such a way that $\pi$ is $\TT$-equivariant and the action is linear on the fibers. The algebraic variety $V$ is not toric in general, and Oda \cite[Section~7.6]{Oda78} notes that the toric vector bundles which are toric varieties are precisely the decomposables ones, i.e., those of the form
	$V_{\mathcal{E}}$ with $\mathcal{E}=\CO_X(D_0)\oplus \cdots\oplus\CO_X(D_r)$ for some $\TT$-invariant Cartier divisors $D_0,\ldots, D_r$ on $X$.
 
	For decomposables toric vector bundles $V_{\mathcal{E}}\to X$ of rank $r+1$, we can construct the fan that defines the projective bundle $\mathbb{P}(\mathcal{E})\to X$ as a toric variety following \cite[\S 7.3]{CoxLittleSchenckToricVarieties}: Let $D_0,\ldots, D_r$ be $\TT$-invariant Cartier divisors, and write $D_i=\sum_{\rho\in \Sigma(1)}a_{i\rho}D_\rho$ with~$a_{i\rho}\in \ZZ$ for $i\in \{0,\ldots,r\}$. To construct the fan of $V_\mathcal{E}$ 
 we work in the vector space $N_\RR \oplus\RR^{r+1}$. We will denote by~$\{e_0,\ldots,e_r\}$ the canonical basis of $\RR^{r+1}\subset N_\RR \oplus \RR^{r+1} $ and write the elements of $N_\RR\oplus\RR ^{r+1}$ in the form $u+\lambda_0e_0+\cdots+\lambda_r e_r$, with $u\in N_\mathbb{R}$ and $\lambda_0,\ldots,\lambda_r\in \mathbb{R}$. Then, given $\sigma\in \Sigma$ we define $\overline{\sigma}\subset N_\mathbb{R}\oplus \mathbb{R}^{r+1}$ to be the Minkowski sum
	\begin{equation*}
			\overline{\sigma}
			:=\operatorname{cone}(u_\rho-a_{0\rho}e_0-\cdots-a_{r\rho}e_r:\rho\in\sigma(1))+\operatorname{cone}(e_0,\ldots,e_r),
	\end{equation*}
	where $u_\rho\in N$ is the primitive generator of the ray $\rho\in \sigma(1)$. The set of cones~$\{\overline{\sigma}\}_{\sigma \in \Sigma}$, together with their faces, defines a fan~$\overline{\Sigma}$ in~$N_\RR\oplus \RR^{r+1}$ for a toric variety $X_{\overline{\Sigma}}$ with a vector bundle structure $X_{\overline{\Sigma}}\to X_{\Sigma}$ whose sheaf of sections is isomorphic to $\mathcal{E}$, hence~$X_{\overline{\Sigma}}\simeq V_{\mathcal{E}}.$
 
Now consider $\PP(\mathcal{E})\rightarrow X_\Sigma$, which is a projective bundle with fibers isomorphic to $\PP^r$. To construct the fan of $\PP(\mathcal{E})$ we need to consider the dual sheaf $\mathcal{E}^\vee=\CO_X(-D_0)\oplus \cdots\oplus\CO_X(-D_r)$ and the associated vector bundle $V_{\mathcal{E}^\vee}=V_{\mathcal{E}}^{\vee}$. By the above construction, the fan of $V_{\mathcal{E}^\vee}$ is built from the cones 
$$\Cone(u_\rho+a_{0\rho}e_0+\cdots+a_{r\rho}e_r:\rho\in\sigma(1))+\Cone(e_0,\ldots,e_r),$$
	and their faces, where~$\sigma$ ranges over all the cones $\sigma\in \Sigma$. The fan of $\PP(\mathcal{E})$ is obtained as follows: for each $\sigma\in \Sigma$ and~$i\in \{0,\ldots,r\}$ we put $F_i=\Cone(e_0,\ldots,\hat{e}_i,\ldots,e_r)$, where~$\hat{e}_i$ means that we omit the vector~$e_i$, and define 
	$$\overline{\sigma}_i:=\Cone(u_\rho+a_{0\rho}e_0+\cdots+a_{r\rho}e_r:\rho\in\sigma(1))+F_i\subset N_\RR\times \RR^{r+1}.$$
	Let $\sigma_i$ be the image of $\overline{\sigma}_i$ under the canonical projection $N_\RR\oplus \RR^{r+1}\to N_\RR\oplus \overline{N}_\RR$, where $\overline{N}_\RR:=\RR^{r+1}/\RR(e_0+e_1+\cdots+e_r)$. Then, we have the following result (see~\cite[Proposition~7.3.3]{CoxLittleSchenckToricVarieties}).
 
	\begin{prop}\label{prop cone of projective bundle}
		The cones $\{\sigma_i\}_{\sigma\in \Sigma,\; i\in \{0,\ldots,r\}}$ and their faces form a fan $\Sigma_\mathcal{E}$ in $N_\RR\oplus \overline{N}_\RR$ whose associated toric variety $X_{\Sigma_\mathcal{E}}$ is isomorphic to $\PP(\mathcal{E})$.
	\end{prop}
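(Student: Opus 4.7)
The plan is to exploit the description of $\PP(\mathcal{E})$ as the geometric quotient of $V_{\mathcal{E}^\vee}\setminus Z$ by $\GG_m$, where $Z$ denotes the zero section of the vector bundle $V_{\mathcal{E}^\vee}\to X_\Sigma$ and $\GG_m$ acts by scalar multiplication on the fibers. The fan of $V_{\mathcal{E}^\vee}$, recalled just before the statement, is built from the cones $\Cone(u_\rho+\sum_i a_{i\rho}e_i : \rho\in\sigma(1))+\Cone(e_0,\ldots,e_r)$ for $\sigma\in\Sigma$. Two operations transform it into $\Sigma_\mathcal{E}$. First, removing the zero section corresponds to replacing the fiber cone $\Cone(e_0,\ldots,e_r)$ by the union of its facets $F_0,\ldots,F_r$, producing the lifted cones $\overline{\sigma}_i$. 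Second, taking the $\GG_m$-quotient translates, at the level of fans, into the projection $N_\RR\oplus\RR^{r+1}\to N_\RR\oplus\overline{N}_\RR$ along the one-parameter subgroup $\ZZ(e_0+\cdots+e_r)$, and the images of the $\overline{\sigma}_i$ under this projection are exactly the cones $\sigma_i$ in the statement.

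To show that the $\sigma_i$ together with their faces form a fan, I would verify the axioms directly. Strong convexity of each $\sigma_i$ follows because $e_0+\cdots+e_r$ is not contained in the linear span of $\overline{\sigma}_i$ (since $e_i$ is not among its generators), so the projection restricts injectively; rationality and polyhedrality are preserved. For the intersection property, the lifted cones $\overline{\sigma}_i$ and $\overline{\tau}_j$, for $\sigma,\tau\in\Sigma$ and any indices $i,j$, meet in a common face in $N_\RR\oplus\RR^{r+1}$ --- this uses that $\sigma\cap\tau$ is a face of both $\sigma$ and $\tau$ in $\Sigma$, together with the elementary fact that $F_i\cap F_j$ is a common face of $F_i$ and $F_j$ --- and injectivity of the projection on these cones then implies that their images still intersect in a common face.

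For the identification $X_{\Sigma_\mathcal{E}}\simeq\PP(\mathcal{E})$, I would work chart-by-chart. For each $\sigma\in\Sigma$, the $r+1$ cones $\sigma_0,\ldots,\sigma_r$ cover a subset of $X_{\Sigma_\mathcal{E}}$ that should correspond to the local trivialization $\overline{\pi}^{-1}(U_\sigma)\simeq U_\sigma\times\PP^r$ of $\PP(\mathcal{E})\to X_\Sigma$. A direct computation of $\sigma_i^\vee$ in the dual lattice shows that the associated affine toric chart $U_{\sigma_i}$ is isomorphic to $U_\sigma\times\mathbb{A}^r$, and that the transition functions between $U_{\sigma_i}$ and $U_{\sigma_j}$ involve the cocycle $\{a_{i\rho}-a_{j\rho}\}$, which is precisely the transition data defining the projective bundle $\PP(\mathcal{E})=\PP(V_\mathcal{E}^\vee)$ associated to the divisors $D_0,\ldots,D_r$. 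Gluing across the different $\sigma\in\Sigma$ then produces the required global isomorphism. The main obstacle is the bookkeeping around the identification $\PP(\mathcal{E})=\PP(V_\mathcal{E}^\vee)$: the $+a_{i\rho}e_i$ appearing in the generators of $\Sigma_\mathcal{E}$ arise from dualizing the divisors $-D_i$ defining $\mathcal{E}^\vee$, so one must verify carefully that after projectivization the resulting transition matrices agree with those of $\mathcal{E}=\bigoplus_i\CO_X(D_i)$ rather than with those of its dual.
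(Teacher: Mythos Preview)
The paper does not give its own proof of this proposition: it simply cites \cite[Proposition~7.3.3]{CoxLittleSchenckToricVarieties}. Your proposal is a correct outline and is essentially the argument given in that reference (quotient of $V_{\mathcal{E}^\vee}\setminus Z$ by the fiberwise $\GG_m$-action, translated into the removal of the interior fiber cone followed by projection along $\ZZ(e_0+\cdots+e_r)$, and then a chart-by-chart check of the transition data), so there is nothing to compare against beyond noting that your sketch matches the cited source.
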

	
	In practice, we will replace $\overline{N}_\RR=\RR^{r+1}/\RR(e_0+e_1+\cdots+e_r) $ by $\RR^r$ with basis $e_1,\ldots, e_r$ and define $e_0:=-e_1-\cdots-e_r$. Hence, we consider~$F_i=\Cone(e_0,\ldots,\hat{e}_i,\ldots,e_r)\subset \RR^r$, and for a cone $\sigma\in \Sigma$ we get 
	$$\sigma_i=\Cone(u_\rho+(a_{1\rho}-a_{0\rho})e_1+\cdots+(a_{r\rho}-a_{0\rho})e_r:\rho\in \sigma(1))+F_i\subset N_\RR\oplus \RR ^r.$$
	The cones $\sigma_i$ and their faces define a fan $\Sigma_\mathcal{E}$ in $N_\RR\oplus \RR ^r$ for $\PP(\mathcal{E})$.

\section{Hirzebruch--Kleinschmidt varieties}\label{sec HK}
 
	Given integers $r\geq 1, t\geq 2, 0\leq a_1\leq \ldots\leq a_r,$ consider the vector bundle
	$$\mathcal{E}:=\CO_{\PP ^{t-1}}\oplus\CO_{\PP ^{t-1}}(a_1)\oplus\cdots\oplus\CO_{\PP ^{t-1}}(a_r).$$
	Here, as usual, we put $\CO_{\PP ^{t-1}}(a_i):=\CO_{\PP ^{t-1}}(a_iH_0)$ where $H_0\subset \PP^{t-1}$ is a hyperplane, which we can choose as~$H_0=\{x_0=0\}$ in homogeneous coordinates~$[x_0:\ldots:x_t]$.

We can use Proposition~\ref{prop cone of projective bundle} to describe the fan $\Sigma_\mathcal{E}$ of the smooth toric variety~$\PP(\CE)$. For this, it is enough to describe the maximal cones in~$\Sigma_\mathcal{E}$. Consider $N\oplus \overline{N}=\ZZ^{t-1}\oplus \ZZ^r$ with canonical bases~$\{u_1,\ldots,u_{t-1}\}$ and~$\{e_1,\ldots,e_r\}$ for $N$ and $\overline{N}$, respectively. As before, we set $u_0:=-u_1-\cdots-u_{t-1}$ and $e_0:=-e_1-\cdots-e_r$. Note that~$\{u_0,\ldots u_{t-1}\}$ is the set of primitive generators of the rays of a fan for the toric variety~$\PP^{t-1}$ with $u_0$ corresponding to the divisor $H_0$. 
 
Put~$a_0:=0$ and~$D_i:=a_iH_0$ for~$i\in \{0,\ldots ,r\}$. As in Section~\ref{subsec:ToricVectorBundles}, we write~$D_i=\sum_{\rho\in \Sigma(1)}a_{i\rho}D_\rho$. For~$\rho\in \Sigma(1)$ with~$u_{\rho}=u_i$ we define
$$v_i:=u_\rho+(a_{1\rho}-a_{0\rho})e_1+\ldots+(a_{r\rho}-a_{0\rho})e_r=
\left\{
\begin{array}{ll}
u_0+a_1e_1+\cdots+a_re_r   & \text{ if }i=0, \\
 u_i    & \text{ if }i\in \{1,\ldots,t-1\}.\
\end{array}
\right.$$

Since the maximal cones of $\PP^{t-1}$ are $\{\Cone(u_0,\ldots,\hat{u}_i,\ldots,u_{t-1})\}_{i\in \{0,\ldots,t-1\}}$, we see that the maximal cones of $\Sigma_\CE$ are 
	$$\Cone(v_0,\ldots,\hat{v}_j,\ldots,v_{t-1})+\Cone(e_0,\ldots,\hat{e}_i,\ldots,e_r),$$
	for $j\in \{0,\ldots,t-1\}$ and $i\in \{0,\ldots, r\}$. Therefore, the primitive generators of the rays of $\Sigma_\mathcal{E}$ are $v_0,\ldots,v_{t-1}, e_0, e_1,\ldots,e_r$ (compare with~\cite[p.~256]{Kleinschmidt88}).

    Note that $\mathbb{P}(\mathcal{E})$ has dimension  
    $$d:=\dim(\mathbb{P}(\mathcal{E}))=r+t-1.$$
    Since~$\Sigma_\mathcal{E}$ contains cones of maximal dimension~$d$ and $\#\Sigma_\mathcal{E}=d+2$, we conclude that $\operatorname{Pic}(\mathbb{P}(\mathcal{E}))\simeq \mathbb{Z}^2$. Conversely, Kleinschmidt \cite{Kleinschmidt88} proved the following classification result of smooth projective\footnote{Actually, the classification in \cite{Kleinschmidt88} does not assume the projectivity hypothesis.} varieties of Picard rank 2.
    
	\begin{thm}[Kleinschmidt]
		Let $X_\Sigma$ be a smooth projective toric variety with $\Pic(X_\Sigma)\simeq \ZZ^2$. Then, there exists integers $r\geq 1, t\geq 2, 0\leq a_1\leq\cdots\leq a_r$ with $r+t-1=d=\dim(X_\Sigma)$ such that
		$$X_\Sigma\simeq \PP(\CO_{\PP ^{t-1}}\oplus\CO_{\PP ^{t-1}}(a_1)\oplus\cdots\oplus\CO_{\PP ^{t-1}}(a_r)).$$
	\end{thm}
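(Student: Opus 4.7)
The plan is to prove this as in Kleinschmidt's original argument, via combinatorial analysis of the fan using the theory of primitive collections. First, since $X_\Sigma$ is smooth complete of dimension $d$ with $\rk\Pic(X_\Sigma) = 2$, the short exact sequence $0 \to M \to \ZZ^{\Sigma(1)} \to \Pic(X_\Sigma) \to 0$ yields $\#\Sigma(1) = d+2$. Recall that a \emph{primitive collection} is a minimal subset $P \subseteq \Sigma(1)$ whose rays do not span a cone of $\Sigma$. By Batyrev's theory for smooth projective toric varieties, the associated primitive relations generate the Mori cone $\overline{\operatorname{NE}}(X_\Sigma)$, which in our setting has dimension $\rk\Pic(X_\Sigma) = 2$. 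I would use this together with a standard combinatorial argument to deduce that $\Sigma$ admits exactly two primitive collections $P_1$ and $P_2$, that they are disjoint, and that $P_1 \sqcup P_2 = \Sigma(1)$. Writing $t := |P_1|$ and $r+1 := |P_2|$ gives $d = r+t-1$, matching the target dimension.

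Next, I would put the fan into standard form. The maximal cones of $\Sigma$ are exactly the size-$d$ subsets of $\Sigma(1)$ containing no primitive collection, hence obtained by deleting one ray from $P_1$ and one from $P_2$. By smoothness each such cone is generated by a $\ZZ$-basis of $N$; I pick one and change basis so that its rays are the standard basis $\{u_1,\ldots,u_{t-1}, e_1,\ldots,e_r\}$ of $N = \ZZ^{t-1}\oplus\ZZ^r$. Label the two unused rays $v_0 \in P_1$ and $w_0 \in P_2$. Applying unimodularity of every other maximal cone (each obtained from the chosen one by swapping one basis vector with $v_0$ or $w_0$), together with completeness of the fan to pin down signs, I expect to obtain
$$v_0 = -u_1 - \cdots - u_{t-1} + a_1 e_1 + \cdots + a_r e_r, \qquad w_0 = -e_1 - \cdots - e_r,$$
for some integers $a_1,\ldots,a_r$. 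This matches the fan $\Sigma_\CE$ constructed in Proposition~\ref{prop cone of projective bundle} for $\CE = \CO_{\PP^{t-1}}\oplus\CO_{\PP^{t-1}}(a_1)\oplus\cdots\oplus\CO_{\PP^{t-1}}(a_r)$, giving the desired toric isomorphism $X_\Sigma \simeq \PP(\CE)$.

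To ensure $0 \leq a_1 \leq \cdots \leq a_r$, I would use the two operations that preserve the isomorphism class of the projective bundle: twisting $\CE$ by a line bundle $\mathcal{L}$ on $\PP^{t-1}$ shifts all $a_i$ by the same integer, so twisting by $\CO_{\PP^{t-1}}(-\min_i a_i)$ makes them non-negative; and permuting the direct summands of $\CE$ reorders the $a_i$ into non-decreasing order.

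The main obstacle I anticipate is extracting the precise form of $v_0$ and $w_0$. Unimodularity of the relevant maximal cones forces the coefficient of each $u_k$ in $v_0$ to be $\pm 1$, and completeness of the fan pins these signs down to $-1$, and symmetrically for $w_0$ in the $e$-direction. The genuinely delicate point is showing that $w_0$ has \emph{no} component in the $u$-direction — an asymmetry that reflects the distinct roles of $P_1$ (base $\PP^{t-1}$) and $P_2$ (fiber $\PP^r$) in the projective bundle structure. This should follow from the extremality of the Mori ray associated to $P_2$, but verifying it cleanly requires a careful analysis of the two primitive relations together with the disjointness of $P_1$ and $P_2$ established in the first step.
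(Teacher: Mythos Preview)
The paper does not prove this theorem; it is stated and attributed to Kleinschmidt~\cite{Kleinschmidt88} without proof, so there is no argument in the paper to compare against.

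Your outline is the standard modern proof, essentially Kleinschmidt's argument recast in Batyrev's language of primitive collections; a textbook version appears as \cite[Theorem~7.3.7]{CoxLittleSchenckToricVarieties}. The structure is sound: the count $\#\Sigma(1)=d+2$, the partition of $\Sigma(1)$ into two disjoint primitive collections, the description of maximal cones as complements of one ray from each $P_i$, and the twist-and-permute normalization of the $a_i$ are all correct. The point you flag as delicate---that after choosing coordinates one of the two extra rays has the form $w_0=-e_1-\cdots-e_r$ with \emph{no} $u$-component---is exactly where the work lies. Your heuristic via extremality of a Mori ray is pointing in the right direction but is not yet a proof: what one actually shows is that of the two primitive relations, at least one has trivial right-hand side (i.e., $\sum_{\rho\in P_i}u_\rho=0$), and one obtains this by writing out both relations, using that the right-hand side of each is supported on the \emph{other} primitive collection, and comparing. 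Once that is in hand, the identification with the fan $\Sigma_{\mathcal{E}}$ described before Proposition~\ref{prop cone of projective bundle} is immediate. If you fill in that step, the argument is complete.
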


We now recall the definition of Hirzebruch--Kleinschmidt variety given in the Introduction, which is the main family of algebraic varieties studied in this paper.

\begin{defi}\label{defi:HK varieties}
    Given integers $r\geq 1$, $t\geq 2$ and $0\leq a_1 \leq \cdots \leq a_r$, the  \emph{Hirzebruch--Kleinschmidt variety} $X_d(a_1,\ldots,a_r)$ is defined as
    \[
    X_d(a_1,\ldots,a_r):=\PP ( \CO_{\PP^{t-1}} \oplus \CO_{\PP ^{t-1}}(-a_r) \oplus \CO_{\PP ^{t-1}}(a_1-a_r)\oplus\cdots\oplus\CO_{\PP ^{t-1}}(a_{r-1}-a_r)),
    \]
    where $d=\dim(X_d(a_1,\ldots,a_r))=r+t-1$. We denote by $\pi:X_d(a_1,\ldots,a_r)\to \mathbb{P}^{t-1}$ the associated projective bundle.
    \label{def:HKVarieties}
\end{defi}

\begin{remark}\label{rmk isom projective bundles}
    Recall that for any line bundle $\mathcal{L}\in \operatorname{Pic}(X)$ and every locally free sheaf~$\mathcal{E}$ on an algebraic variety $X$ there is a canonical isomorphism of projective bundles~$\mathbb{P}(\mathcal{E}\otimes_{\mathcal{O}_X} \mathcal{L})\simeq \mathbb{P}(\mathcal{E})$ (see e.g.~\cite[Lemma~7.0.8(b)]{CoxLittleSchenckToricVarieties}). In particular, 
    $$X_d(a_1,\ldots,a_r)\simeq \PP(\CO_{\PP ^{t-1}}\oplus\CO_{\PP ^{t-1}}(a_1)\oplus\cdots\oplus\CO_{\PP ^{t-1}}(a_r)).$$
    We choose the description in Definition \ref{def:HKVarieties} because it allows for a simpler characterization of the cone of effective divisors (see Proposition~\ref{prop:EffectiveCone} below).
\end{remark}

Note that the minimal generators of the rays in the fan of~$X_d(a_1,\ldots,a_r)$ are the vectors 
$$w_i:=
\left\{
\begin{array}{ll}
u_0-a_r e_1+(a_1-a_r)e_2+\ldots+(a_{r-1}-a_r)e_r  & \text{ if }i=0, \\
 u_i    & \text{ if }i\in \{1,\ldots,t-1\},
\end{array}
\right.$$
 together with the primitive elements $e_0, \ldots, e_r$. From now on we denote by $D_i$ the divisor on $X_d(a_1,\ldots,a_r)$ corresponding to the minimal generator $w_i$, for $i\in \{0, \ldots, t-1\}$, and by $E_j$ the corresponding divisor corresponding to $e_j$, for $j\in \{0,\ldots,r\}$. It is easy to see that~$\CO_X(D_i)\simeq \pi^*\CO_{\PP^{t-1}}(1)$ for every~$i\in \{1, \ldots, t-1\}$ and~$\CO_X(E_0)\simeq \CO_{X}(1)$ (e.g., by using local trivializations). 
  


\subsection{Effective divisors}\label{subsection effective divisors}

The following description of the cone of effective divisors on~$X_d(a_1,\ldots,a_r)$ is a natural extension to higher dimensions of the description for Hirzebruch surfaces (see e.g. \cite[Chapter V, Corollary~2.18]{Har77}). 

\begin{prop}\label{prop effective cone}
		Let $X=X_d(a_1, \ldots,a_r)$ be a Hirzebruch--Kleinschmidt variety and let us denote by~$f$ the class of~$\pi^\ast \mathcal{O}_{\mathbb{P}^{t-1}}(1)$ and by~$h$ the class of~$\mathcal{O}_X(1)$, both in~$\Pic(X)$. 
  Then:
  \begin{enumerate}
  \item $\Pic(X)\simeq \ZZ h \oplus \ZZ f$.
      \item The anticanonical divisor class of $X$ is given by
      $$-K_X=(r+1)h+\left((r+1)a_r+ t-|\mathbf{a}| \right) f,$$
      where $|\mathbf{a}|=\sum_{i=1}^r a_i$.
      \item The cone of effective divisors of $X$ is given by
      $$\Lambda_{\textup{eff}}(X)=\{\lambda h+\mu  f:\lambda \geq 0, \mu \geq 0\}\subset \Pic(X)_\RR$$
      where $\Pic(X)_\RR:=\Pic(X)\otimes_{\ZZ}\RR$.
  \end{enumerate}
  \label{prop:EffectiveCone}
	\end{prop}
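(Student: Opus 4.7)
The plan is to read everything off the fan $\Sigma_X$ of $X = X_d(a_1,\ldots,a_r)$ described just before the statement, using the standard dictionary between toric divisors, characters, and the Picard group (see Section \ref{subsec:ToricVarieties}).

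For (1), the rays of $\Sigma_X$ are $w_0,w_1,\ldots,w_{t-1},e_0,e_1,\ldots,e_r$, so $\#\Sigma_X(1) = d+2$, and since $\Sigma_X$ contains cones of maximal dimension, the discussion in Section \ref{subsec:ToricVarieties} gives $\rk \Pic(X) = 2$. To exhibit a basis, I would use the character relations $\mathrm{div}(\chi^m) = \sum_\rho \langle m,u_\rho\rangle D_\rho$ with $m$ running over the dual basis $u_1^\ast,\ldots,u_{t-1}^\ast,e_1^\ast,\ldots,e_r^\ast$ of $M$. Taking $m = u_j^\ast$ gives $D_j \sim D_0$ for $j = 1,\ldots,t-1$ (recalling $u_0 = -u_1-\cdots-u_{t-1}$). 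Taking $m = e_k^\ast$ and reading off the $e_k$--coefficient of $w_0$ (with the convention $a_0 := 0$) gives
\[
E_k \sim E_0 + (a_r - a_{k-1})\, D_0, \qquad k = 1,\ldots,r.
\]
Together with the identifications $\mathcal{O}_X(D_0) \simeq \pi^\ast \mathcal{O}_{\mathbb{P}^{t-1}}(1)$ and $\mathcal{O}_X(E_0) \simeq \mathcal{O}_X(1)$ noted in the excerpt, this shows that $f = [D_0]$ and $h = [E_0]$ generate $\Pic(X)$ freely, proving (1).

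For (2), I would apply the toric formula $-K_X = \sum_{\rho \in \Sigma_X(1)} D_\rho = \sum_{i=0}^{t-1} D_i + \sum_{k=0}^r E_k$ and substitute the linear equivalences just derived. The coefficient of $h$ is $r+1$, and the coefficient of $f$ becomes
\[
t + \sum_{k=1}^r (a_r - a_{k-1}) \;=\; t + r a_r - (a_0 + a_1 + \cdots + a_{r-1}) \;=\; t + (r+1)a_r - |\mathbf{a}|,
\]
using $a_0 = 0$, as claimed.

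For (3), I would use the standard fact that on a complete toric variety the effective cone is generated by the classes of the $T$-invariant prime divisors (see e.g.\ \cite[Lemma 15.1.8]{CoxLittleSchenckToricVarieties}). From the relations above, $[D_i] = f$ and $[E_k] = h + (a_r - a_{k-1})f$, and the assumption $0 = a_0 \leq a_1 \leq \cdots \leq a_r$ ensures every coefficient $a_r - a_{k-1}$ is $\geq 0$. Hence every generator lies in $\mathbb{R}_{\geq 0}h + \mathbb{R}_{\geq 0}f$, so $\Lambda_{\textup{eff}}(X) \subseteq \mathbb{R}_{\geq 0}h + \mathbb{R}_{\geq 0}f$; the reverse inclusion is immediate since $h = [E_0]$ and $f = [D_0]$ are classes of effective divisors. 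The only mild obstacle is bookkeeping for the signs of the pairings $\langle e_k^\ast, w_0\rangle$ and the role of the shift by $-a_r$ in the chosen normalization of $\mathcal{E}$, which is precisely why Definition \ref{def:HKVarieties} is set up so that all coefficients $a_r - a_{k-1}$ come out non-negative.
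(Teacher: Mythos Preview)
Your proof is correct and follows essentially the same approach as the paper: both arguments compute the divisors of the characters $\chi^{u_j^\ast}$ and $\chi^{e_k^\ast}$ to obtain the relations $D_j\sim D_0$ and $E_k\sim E_0+(a_r-a_{k-1})D_0$, then apply the toric formula $-K_X=\sum_\rho D_\rho$ and \cite[Lemma~15.1.8]{CoxLittleSchenckToricVarieties} for the effective cone. The paper's write-up is slightly terser (it does not spell out both inclusions in part~(3)), but the content is identical.
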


\begin{proof} We follow \cite[Section~7]{HuangRationalAppToricVar2021}.
With the above notation we have that the vectors $w_1,\ldots,w_{t-1},e_1,\ldots,e_r$ form a basis for $\ZZ^{t-1}\oplus\ZZ ^r$. For $1\leq i\leq t-1, 1\leq j\leq r$ we denote by $w_i^*,e_j^*$ the corresponding dual basis elements. We then compute the  divisors of the characters $\chi^{w_i^*}$ for $i\in \{1,\ldots,t-1\}$, which are
\[
\textup{div}\left( \chi^{w_i^*}\right) =\sum_{k=0}^{t-1}\left\langle w_i^*,w_k \right\rangle D_k+\sum_{k=0}^r\left\langle w_i^*,e_k \right\rangle E_k =-D_0+D_i,
\]
	and similarly the divisors of the characters $\chi^{e_j^*}$ for $j\in\{1,\ldots,r\}$ are
 \[
 \textup{div}\left( \chi^{e_j^*}\right)=(a_{j-1}-a_r)D_0-E_0+E_j
 \]
 (recalling that~$a_0:=0$). Therefore, in $\Pic(X_d(a_1,\ldots,a_r))$ we have the relations 
 \begin{equation}\label{eq relations D_i and E_j}
     D_i=D_0 \text{ and }E_j=E_0+(a_r-a_{j-1})D_0 \text{ for $1\leq i\leq t-1, 1\leq j\leq r$.}
 \end{equation}
 In particular, $\Pic(X_d(a_1,\ldots,a_r))=\ZZ\cdot E_0\oplus \ZZ\cdot D_0=\ZZ h \oplus \ZZ f$. This proves item~$(1)$.
 
	It follows from~\cite[Theorem~8.2.3]{CoxLittleSchenckToricVarieties} that the anticanonical divisor class of $X=X_d(a_1,\ldots,a_r)$ is given by the class
 \[
 \sum_{i=0}^{t-1}D_i+\sum_{j=0}^rE_j=(r+1)E_0+\left( (r+1)a_r+t-|\mathbf{a}|\right)D_0.
 \]
 This proves item~$(2)$. 

Finally, by~\cite[Lemma~15.1.8]{CoxLittleSchenckToricVarieties} the effective cone equals the cone generated by the classes of the divisors~$D_i$ and~$E_j$. Hence, item~$(3)$ is a consequence of~\eqref{eq relations D_i and E_j}. This completes the proof of the proposition.

\end{proof}

It follows from~\cite[Theorem~2.2.26]{LazI} that a divisor class in~$\Pic(X_d(a_1,\ldots,a_r))$ is  big if and only if it lies in the interior of the effective cone~$\Lambda_{\textup{eff}}(X)$. Hence, we get the following corollary from Proposition~\ref{prop effective cone}.

\begin{cor}\label{cor big divisors}
  Let~$L=\lambda h+\mu f$ with~$\lambda,\mu\in \ZZ$, where~$\{h,f\}$ is the basis of~$\Pic(X_d(a_1,\ldots,a_r))$ given in Proposition~\ref{prop effective cone}. Then, $L$ is big if and only if~$\lambda>0$ and~$\mu>0$.
\end{cor}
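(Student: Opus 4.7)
The plan is to deduce this directly from Proposition~\ref{prop effective cone} together with the standard characterization of big divisor classes as those lying in the interior of the pseudo-effective cone (equivalently, here, the closure of the effective cone, since on a toric variety the effective cone is closed and rational polyhedral). I would invoke~\cite[Theorem~2.2.26]{LazI}, which is already cited in the text right before the corollary, to assert that a class $L \in \operatorname{Pic}(X)_{\RR}$ is big if and only if $L$ lies in the topological interior of $\Lambda_{\textup{eff}}(X)$.

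Next, I would identify this interior explicitly. By Proposition~\ref{prop effective cone}(3), one has
\[
\Lambda_{\textup{eff}}(X)=\{\lambda h+\mu f : \lambda \geq 0,\ \mu \geq 0\}\subset \Pic(X)_{\RR},
\]
and $\Pic(X)_{\RR}$ is the two-dimensional real vector space with basis $\{h,f\}$ by Proposition~\ref{prop effective cone}(1). Thus $\Lambda_{\textup{eff}}(X)$ is the closed first quadrant in these coordinates, and its interior is the open first quadrant
\[
\{\lambda h+\mu f : \lambda>0,\ \mu>0\}.
\]
Combining these two observations yields the stated equivalence: $L=\lambda h+\mu f$ is big if and only if $\lambda>0$ and $\mu>0$.

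There is essentially no obstacle here; the only point worth checking is that the interior being taken is the interior in $\Pic(X)_{\RR}$ (not in some ambient larger space), which is automatic because $\{h,f\}$ is a basis, so the cone $\Lambda_{\textup{eff}}(X)$ has nonempty interior in $\Pic(X)_{\RR}$ and this interior is precisely the locus where both coefficients are strictly positive. The integrality assumption $\lambda,\mu\in \ZZ$ plays no role beyond the fact that $L$ lies in $\Pic(X)$ rather than $\Pic(X)_{\RR}$; the characterization of bigness in terms of interior membership is a statement in $\Pic(X)_{\RR}$ and applies verbatim.
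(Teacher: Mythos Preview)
Your proposal is correct and follows essentially the same approach as the paper: invoke \cite[Theorem~2.2.26]{LazI} to identify big classes with the interior of $\Lambda_{\textup{eff}}(X)$, and then use Proposition~\ref{prop effective cone}(3) to read off that this interior is the open first quadrant in the $\{h,f\}$-coordinates.
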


In particular, Proposition~\ref{prop effective cone} implies that the anticanonical divisor class in a Hirzebruch--Kleinschmidt variety is big. This is true for any smooth projective toric variety.


\subsection{Peyre's $\alpha$-constant}\label{subsection Peyre's alpha}

Let us recall that \emph{Peyre's $\alpha$-constant} of an almost Fano\footnote{Every smooth projective toric variety is almost Fano.} variety~$X$ is defined as
\[
\alpha(X)=\frac{1}{\left(\rk \Pic(X)-1\right)!}\int_{\Lambda_{\textup{eff}}(X)^\vee} e^{-\left \langle -K_X,\mathbf{y}\right \rangle }\textup{d}\mathbf{y},
\]
where~$\langle \cdot ,\cdot \rangle$ denotes the natural pairing~$\Pic(X)_\RR \times \Pic(X)_\RR^\vee \to \RR$ and $\textup{d}\mathbf{y}$ denotes the Lebesgue measure on~$\Pic(X)_\RR^\vee $ normalized to give covolume 1 to the lattice~$\Pic(X)^\vee$ (see e.g.~\cite[Definition~2.5]{Pey/Tsch2001}). 

A straightforward computation using Proposition~\ref{prop effective cone} give us the following result.

 \begin{lem}
 \label{lem:calculusAlpha}
     Let $X=X_d(a_1,\ldots,a_r)$ be a Hirzebruch--Kleinschmidt variety. Then, its $\alpha$-constant is given by
     \[
     \alpha(X)=\int_{0}^{\infty}\int_{0}^{\infty} e^{-(r+1)y_1-((r+1)a_r+t-|\mathbf{a}|)y_2}\textup{d}y_1\, \textup{d}y_2 =\frac{1}{(r+1)\left((r+1)a_r+t- |\mathbf{a}|\right) }.
     \]
 \end{lem}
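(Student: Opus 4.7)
The proof amounts to unwinding the definition of $\alpha(X)$ using the information already recorded in Proposition~\ref{prop effective cone}. First, I would choose coordinates on $\Pic(X)_\RR$ and its dual. Proposition~\ref{prop effective cone}(1) gives the basis $\{h,f\}$ of $\Pic(X)$, and I let $\{h^\ast,f^\ast\}$ denote the dual basis of $\Pic(X)^\vee$. Under this identification $\Pic(X)_\RR^\vee\simeq \RR^2$ via $(y_1,y_2)\mapsto y_1 h^\ast+y_2 f^\ast$, and the lattice $\Pic(X)^\vee$ is sent to $\ZZ^2$, so the normalized Lebesgue measure $\mathrm{d}\mathbf{y}$ on $\Pic(X)_\RR^\vee$ corresponds to the standard Lebesgue measure $\mathrm{d}y_1\,\mathrm{d}y_2$ on $\RR^2$.

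Next I would identify the region of integration. By Proposition~\ref{prop effective cone}(3), $\Lambda_{\textup{eff}}(X)$ is the non-negative orthant generated by $h$ and $f$, hence its dual cone is the non-negative orthant generated by $h^\ast$ and $f^\ast$. In the chosen coordinates this is $\{(y_1,y_2)\in\RR^2:y_1\geq 0,\,y_2\geq 0\}$.

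Then I would write down the integrand. By Proposition~\ref{prop effective cone}(2) we have $-K_X=(r+1)h+\bigl((r+1)a_r+t-|\mathbf{a}|\bigr)f$, so the natural pairing evaluates to
\[
\langle -K_X,\mathbf{y}\rangle=(r+1)y_1+\bigl((r+1)a_r+t-|\mathbf{a}|\bigr)y_2.
\]
Since $\rk\Pic(X)=2$, the prefactor $1/(\rk\Pic(X)-1)!$ equals $1$, and the definition of $\alpha(X)$ reduces to the iterated integral displayed in the lemma.

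Finally, the computation is just the product of two elementary one-dimensional exponential integrals, using $\int_0^\infty e^{-cy}\,\mathrm{d}y=1/c$ for $c>0$ (note that $(r+1)>0$ and $(r+1)a_r+t-|\mathbf{a}|=(r+1)a_r+t-\sum_{i=1}^r a_i\geq t\geq 2>0$ since $a_i\leq a_r$ for all $i$, so Fubini applies and both factors are finite). No real obstacle arises; the only point worth double-checking is the covolume-one normalization of the dual measure, which is automatic because $\{h^\ast,f^\ast\}$ is integral and dual to an integral basis of $\Pic(X)$.
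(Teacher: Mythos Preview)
Your proposal is correct and matches the paper's approach: the paper simply states that the lemma follows from ``a straightforward computation using Proposition~\ref{prop effective cone}'', and what you have written is precisely that computation spelled out in full.
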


\subsection{Restriction of big line bundles}\label{sec restriction of divisors}

Given integers $r\geq 1$, $t\geq 2$ and $0\leq a_1 \leq \cdots \leq a_r$ we defined 
the Hirzebruch--Kleinschmidt variety~$X=X_d(a_1,\ldots,a_r)$ of dimension $d=r+t-1$ as the projective bundle~$\PP(\mathcal{E})$ where
$$\mathcal{E}:=\PP ( \CO_{\PP^{t-1}} \oplus \CO_{\PP ^{t-1}}(-a_r) \oplus \CO_{\PP ^{t-1}}(a_1-a_r)\oplus\cdots\oplus\CO_{\PP ^{t-1}}(a_{r-1}-a_r)).$$


\begin{defi}\label{def F}
Put
$$\mathscr{Y}:=\CO_{\PP^{t-1}}(-a_r)\oplus \CO_{\PP^{t-1}}(a_1-a_r)\oplus \cdots\oplus \CO_{\PP^{t-1}}(a_{r-1}-a_r)$$
and define, as in the Introduction, the projective subbundle $F:=\mathbb{P}(\mathscr{Y})\subset X_d(a_1,\ldots,a_r)$.
\end{defi}

Note that, when~$r\geq 2$ we have (see Remark~\ref{rmk isom projective bundles})
\begin{equation}\label{eq isom F with X_(d-1)}
F\simeq \mathbb{P}(\mathscr{Y}\otimes \mathcal{O}_{\mathbb{P}^{t-1}}(a_r-a_{r-1})) = X_{d-1}(a_1,\ldots,a_{r-1}),
\end{equation}
hence~$F$ is a Hirzebruch--Kleinschmidt variety of dimension~$d-1$, while in the case~$r=1$ we have that~$F\simeq \PP^{t-1}$ is a projective space.

Denote by~$\iota:F\to X$ the inclusion map. Given a class~$L\in \Pic(X)$, we denote by~$L|_F:=\iota^{*}L$ its restriction to~$F$. 

In this section we prove the following results concerning the restriction to~$F$ of line bundles on~$X$.

 \begin{lem}
 \label{lem:restrictionBigDivisor}
 Assume~$r\geq 2$, let~$X=X_d(a_1,\ldots,a_r)$, $X'=X_d(a_1,\ldots,a_{r-1})$, and let~$\{h,f\}$, $\{h',f'\}$ be the bases of~$\Pic(X)$ and~$\Pic(X')$, respectively, given in Proposition \ref{prop:EffectiveCone}. If $L=\lambda h+\mu f\in \Pic(X)$, then $L|_F\in \Pic(F)$ corresponds under the canonical isomorphism~\eqref{eq isom F with X_(d-1)} to the class
 $$\lambda h'+(\mu-\lambda (a_r-a_{r-1}))f'\in \Pic(X').$$
 In particular, $L|_F$ is a big line bundle class in $\Pic(F)$ if and only if $\lambda>0$ and $\mu>\lambda (a_r-a_{r-1})$.
 \end{lem}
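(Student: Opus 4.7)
The plan is to work with the two generators $h = \mathcal{O}_X(1)$ and $f = \pi^*\mathcal{O}_{\mathbb{P}^{t-1}}(1)$ of $\Pic(X)$ given by Proposition~\ref{prop effective cone}, compute their restrictions to $F$ via the inclusion $\iota: F \hookrightarrow X$, and then transport the result to $\Pic(X')$ through the canonical isomorphism $\phi: F \xrightarrow{\sim} X'$ from Remark~\ref{rmk isom projective bundles}. The statement then follows by linearity, and the bigness criterion is immediate from Corollary~\ref{cor big divisors} applied to $X'$.

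The restriction of $f$ is straightforward: since $\phi$ is an isomorphism of projective bundles over $\mathbb{P}^{t-1}$, the projection $\pi \circ \iota: F \to \mathbb{P}^{t-1}$ factors as $\pi' \circ \phi$, so $\iota^* f = \phi^*(f')$ and thus $f|_F$ corresponds to $f'$ under the identification. For $h$, I would first note that the embedding $F = \mathbb{P}(\mathscr{Y}) \hookrightarrow \mathbb{P}(\mathcal{E}) = X$ is induced by the canonical projection $\mathcal{E} = \mathcal{O}_{\mathbb{P}^{t-1}} \oplus \mathscr{Y} \twoheadrightarrow \mathscr{Y}$ (equivalently, by the dual inclusion $V_\mathscr{Y}^\vee \hookrightarrow V_\mathcal{E}^\vee$, in the paper's convention $\mathbb{P}(\mathcal{E}) = \mathbb{P}(V_\mathcal{E}^\vee)$). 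Since this identifies tautological sub-line-bundles with tautological sub-line-bundles fiberwise, it follows that $\iota^*\mathcal{O}_X(1) = \mathcal{O}_F(1) = \mathcal{O}_{\mathbb{P}(\mathscr{Y})}(1)$.

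The key step is then to track how $\mathcal{O}_F(1)$ transforms under the twisting isomorphism $\phi: \mathbb{P}(\mathscr{Y}) \xrightarrow{\sim} \mathbb{P}(\mathscr{Y} \otimes \mathcal{O}_{\mathbb{P}^{t-1}}(a_r - a_{r-1})) = X'$. The standard twist formula---which follows either from the universal property of the projective bundle or from a direct computation on trivializations; see e.g.~\cite[Ch.~II, Lemma~7.9]{Har77}---yields
\[
\phi^*\mathcal{O}_{X'}(1) \;=\; \mathcal{O}_F(1) \otimes \pi_F^*\mathcal{O}_{\mathbb{P}^{t-1}}(a_r - a_{r-1}),
\]
where $\pi_F = \pi \circ \iota$. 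Rewriting in the basis $\{h', f'\}$ of $\Pic(X')$ gives that $h|_F$ corresponds to $h' - (a_r - a_{r-1})f'$ under $\phi^*$. Combining with $f|_F \leftrightarrow f'$ by linearity, a general class $L = \lambda h + \mu f$ restricts to $\lambda h' + (\mu - \lambda(a_r - a_{r-1}))f'$ on $X'$, and Corollary~\ref{cor big divisors} applied to $X' = X_{d-1}(a_1,\ldots,a_{r-1})$ then yields the bigness criterion $\lambda > 0$ and $\mu > \lambda(a_r - a_{r-1})$. The main pitfall is the sign in the twist formula, which depends on the Grothendieck-type convention $\mathbb{P}(\mathcal{E}) = \mathbb{P}(V_\mathcal{E}^\vee)$ adopted in the paper; once this is settled, every step is routine.
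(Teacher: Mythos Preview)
Your proof is correct and follows essentially the same approach as the paper. The only minor difference is in how you justify $\iota^*\mathcal{O}_X(1)=\mathcal{O}_F(1)$: you argue directly from the functoriality of the tautological bundle under the projective-subbundle inclusion induced by $\mathcal{E}\twoheadrightarrow\mathscr{Y}$, whereas the paper passes through an auxiliary closed immersion $\varphi:X\hookrightarrow\mathbb{P}^N$ with $\mathcal{O}_X(1)=\varphi^*\mathcal{O}_{\mathbb{P}^N}(1)$; both routes are standard and arrive at the same conclusion, and the remainder (the twist formula via \cite[II, Lemma~7.9]{Har77}, the handling of $f$, and the appeal to Corollary~\ref{cor big divisors}) is identical.
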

	\begin{proof}
 On the one hand, given a closed immersion~$\varphi:X\hookrightarrow \PP^N$ (for some~$N>0$) we have
 $$h|_F=\iota^*h=\iota^*(\CO_X(1))=\iota^* (\varphi^*(\CO_{\PP^N}(1)))= (\varphi\circ \iota)^*(\CO_{\PP^N}(1))=\CO_F(1).$$
 Now, under the isomorphism~\eqref{eq isom F with X_(d-1)}, the class $\CO_F(1)\in \Pic(F)$ corresponds to~$\CO_{X'}(1)\otimes (\pi')^*\CO_{\PP ^{t-1}}(a_{r-1}-a_r)$, where~$\pi':X'\to \PP^{t-1}$ is the projection map of~$X'$ (see~\cite[Chapter 2, Lemma~7.9]{Har77}). 
 This shows that~$h|_F$ corresponds to~$h'+(a_{r-1}-a_{r})f'$. 
On the other hand, since~\eqref{eq isom F with X_(d-1)} is an isomorphism of projective bundles over~$\PP^{t-1}$, we have that~$f|_F=(\pi|_F)^*(\CO_{\PP^{t-1}}(1))$ corresponds to~$(\pi')^*(\CO_{\PP^{t-1}}(1))=f'$. This implies that~$L|_F=\lambda h|_F+\mu f|_F$ corresponds to
$$\lambda (h'+(a_{r-1}-a_{r})f')+\mu f'=\lambda h'+(\mu-\lambda (a_r-a_{r-1}))f'.$$
Finally, the last statement follows from Corollary~\ref{cor big divisors}. This proves the lemma. 
 \end{proof}

 \begin{lem}
 \label{lem:restrictionBigDivisorII}
    Assume~$r=1$, i.e.~$X=X_d(a)$ with~$a\geq 0$ an integer. If $L=\lambda h+\mu f\in  \Pic(X)$, then~$L|_F$ corresponds under the isomorphism~$F\simeq \PP^{t-1}$ to the class of the line bundle~$\CO_{\PP^{t-1}}(\mu-a\lambda)\in \Pic(\PP^{t-1})$. In particular,~$L|_F$ is big if and only if~$\mu>a\lambda$.
\end{lem}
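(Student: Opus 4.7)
The plan is to mimic closely the argument given for Lemma~\ref{lem:restrictionBigDivisor}, exploiting the fact that the only simplification when $r=1$ is that $\mathscr{Y}=\mathcal{O}_{\mathbb{P}^{t-1}}(-a)$ is itself a line bundle, so $F=\mathbb{P}(\mathscr{Y})$ becomes a $\mathbb{P}^0$-bundle which is canonically identified with its base $\mathbb{P}^{t-1}$. As in the previous lemma, it suffices to compute the restrictions of the two generators $h$ and $f$ of $\Pic(X)$ to $F$, and then combine them linearly.

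First I would handle $f$: since $f=\pi^\ast \mathcal{O}_{\mathbb{P}^{t-1}}(1)$ and, under the identification $F\simeq\mathbb{P}^{t-1}$, the restriction $\pi|_F$ is just the identity map, one obtains $f|_F\leftrightarrow \mathcal{O}_{\mathbb{P}^{t-1}}(1)$ immediately. For $h=\mathcal{O}_X(1)$, the key step is to use the same trick invoked in the proof of Lemma~\ref{lem:restrictionBigDivisor}: twist $\mathscr{Y}$ by the line bundle $\mathcal{L}=\mathcal{O}_{\mathbb{P}^{t-1}}(a)$ so that $\mathscr{Y}\otimes \mathcal{L}=\mathcal{O}_{\mathbb{P}^{t-1}}$, and then apply the canonical isomorphism of projective bundles $\mathbb{P}(\mathscr{Y})\simeq \mathbb{P}(\mathscr{Y}\otimes\mathcal{L})=\mathbb{P}(\mathcal{O}_{\mathbb{P}^{t-1}})\simeq\mathbb{P}^{t-1}$, together with the corresponding transformation law of the tautological line bundle \cite[Chapter~2, Lemma~7.9]{Har77}. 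Under this identification $\mathcal{O}_F(1)$ corresponds to $\mathcal{O}_{\mathbb{P}(\mathcal{O})}(1)\otimes \pi^\ast \mathcal{O}_{\mathbb{P}^{t-1}}(-a)$; since on a trivial projective bundle of rank $0$ the tautological line bundle is trivial, this reduces to $\mathcal{O}_{\mathbb{P}^{t-1}}(-a)$, giving $h|_F\leftrightarrow \mathcal{O}_{\mathbb{P}^{t-1}}(-a)$.

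Combining these computations gives
\[
L|_F = \lambda h|_F + \mu f|_F \;\leftrightarrow\; \mathcal{O}_{\mathbb{P}^{t-1}}(-a\lambda)\otimes \mathcal{O}_{\mathbb{P}^{t-1}}(\mu) = \mathcal{O}_{\mathbb{P}^{t-1}}(\mu-a\lambda),
\]
which proves the first assertion. The bigness statement then follows immediately from the well-known fact that $\mathcal{O}_{\mathbb{P}^{t-1}}(k)$ is big (equivalently ample) if and only if $k>0$.

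The only delicate point in the argument is confirming the sign convention for $\mathcal{O}_F(1)$ under the twisting isomorphism $\mathbb{P}(\mathscr{Y})\simeq \mathbb{P}(\mathscr{Y}\otimes\mathcal{L})$; this is exactly the computation already carried out inside the proof of Lemma~\ref{lem:restrictionBigDivisor}, so consistency with that lemma is automatic and no new technical obstacle arises.
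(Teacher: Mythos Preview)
Your proposal is correct and follows essentially the same approach as the paper. The paper's own proof simply states that the argument is ``similar to the case $r\geq 2$'' (i.e., to Lemma~\ref{lem:restrictionBigDivisor}), records the end results $h|_F\leftrightarrow \CO_{\PP^{t-1}}(-a)$ and $f|_F\leftrightarrow \CO_{\PP^{t-1}}(1)$, and omits the details; you have spelled out exactly those details in the intended way.
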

\begin{proof}
    The proof is similar to the case~$r\geq 2$, but now we use that~$h|_F=\pi^*(\CO_{\PP^{t-1}}(-a))$ corresponds to~$\CO_{\PP^{t-1}}(-a)$, while~$f|_F$ corresponds to~$\CO_{\PP^{t-1}}(1)$. We omit the details for brevity.
\end{proof}

\begin{remark}
    When applied to the anticanonical class~$-K_X=(r+1)h+((r+1)a_r+t-|\mathbf{a}|)$ (see Proposition~\ref{prop effective cone}(3)), Lemmas~\ref{lem:restrictionBigDivisor} and~\ref{lem:restrictionBigDivisorII} show that~$-K_X$ remains big when restricted to each component in the decomposition~\eqref{eq natural decomposition HK} if and only if~$t>|\mathbf{a}|$, and this is exactly the case when~$X_d(a_1,\ldots,a_r)$ is Fano according to~\cite[Theorem~2(2)]{Kleinschmidt88}.
\end{remark}
 
\section{Hermitian vector bundles over arithmetic curves}\label{sec hermitian vector bundles}

   In this section we  follow closely the presentation in \cite{BostTIELHVB}. Let us recall that $S=\operatorname{Spec}(\CO_K)$.
 
	\begin{defi}
		A \emph{Hermitian vector bundle} $\overline{E}$ over $S$ is a pair $(E,h)$ where $E$ is a finitely generated projective $\CO_K$-module and $h=\{h_\sigma\}_{\sigma\in \Sigma_K}$ is a family of positive definite  Hermitian forms over the family of complex vector spaces $\{E\otimes_{\CO_K,\sigma}\CC\}_{\sigma\in \Sigma_K}$, which are invariant under conjugation, i.e., $\|e\otimes_{\overline{\sigma}}\overline{\lambda}\|_{\overline{\sigma}}=\|e\otimes_{\sigma} \lambda\|_{\sigma}$, for all~$e\in E,\lambda \in \CC$, where~$\|\cdot \|_{\sigma}:=\sqrt{h_{\sigma}(\cdot,\cdot)}$ is the usual Hermitian norm associated to~$h_{\sigma}$. An element of $E$ is called a \emph{rational section}. 
	\end{defi}
	
	The rank of $\overline{E}=(E,h)$ is defined as the rank of $E$ as $\CO_K$-module, i.e., as the dimension of the complex vector spaces~$E\otimes_{\CO_K,\sigma}\CC$. A morphism between two Hermitian vector bundles $\overline{E}_1=(E_1,h_1)$ and $\overline{E}_2=(E_2,h_2)$ is a $\CO_K$-homomorphism $\varphi:E_1\to E_2$ such that~$\|\varphi(e\otimes_\sigma \lambda)\|_{2,\sigma}\leq \|e\otimes_\sigma\lambda\|_{1,\sigma}$ for all~$e\in E,\lambda\in \CC$ and~$\sigma\in \Sigma_K$.  
 An isomorphism of Hermitian vector bundles is a bijective morphism inducing an isometry~$E_1\otimes_{\CO_K,\sigma}\CC\to E_2\otimes_{\CO_K,\sigma}\CC$ for every~$\sigma$.
 

    	We denote by $\widehat{\Pic}(S)$ the set of \emph{Hermitian  line bundles} (i.e., Hermitian vector bundles of rank 1) over $S$ up to isomorphism. Note that a Hermitian line bundle is uniquely determined by the underlying projective~$\CO_K$-module and the values~$\|1\|_\sigma$ with~$\sigma\in \Sigma_K$.

	\begin{example}
		Let $r\in \mathbb{N}_{\geq 1}$. The \emph{trivial Hermitian vector bundle} $\overline{\CO_K^{\oplus r}}:=(\CO_K^{\oplus r}, h)$ over $S$ is defined by considering for each $\sigma\in \Sigma_K$ the Hermitian form
		$$h_\sigma(a\otimes_\sigma \lambda_1,b\otimes_\sigma \lambda_2):=\lambda_1 \overline{\lambda_2}\langle \sigma(a), \overline{\sigma(b)}\rangle_{\CC^r},$$
		where for $a=(a_i)\in \CO_K^{\oplus r}$ we put $\sigma(a):=(\sigma(a_i))\in \CC^r$ and~$\langle \cdot, \cdot \rangle_{\CC^r}$ denotes the standard bilinear form in~$\mathbb{C}^r$. 
		\label{ex:trivialBundle}
	\end{example}

	\subsection{Operations with Hermitian vector bundles.}
It is possible to extend the usual constructions of linear algebra to Hermitian vector bundles. 
Here we present the ones that will be needed in this paper.
 
	Let $\overline{E}_1=(E_1,h_1)$ and $\overline{E}_2=(E_1,h_2)$ be Hermitian vector bundles over $S$. 
	\begin{itemize}
		
    		\item \textbf{Direct sum. }We define $\OL{E}_1\oplus \OL{E}_2$ as the pair $(E,h)$ where $E:=E_1\oplus E_2$, and over $$(E_1\oplus E_2)\otimes_{\CO_K,\sigma}\CC\simeq (E_1\otimes_{\CO_K,\sigma}\CC)\oplus( E_2\otimes_{\CO_K,\sigma}\CC)$$ we define 
      $$h_\sigma(((e_1\otimes_\sigma\lambda_1),(d_1\otimes_\sigma \mu_1)),((e_2\otimes_\sigma\lambda_2), (d_2\otimes_\sigma \mu_2))):=h_{1,\sigma}(e_1\otimes_\sigma\lambda_1,e_2\otimes_\sigma\lambda_2)+h_{2,\sigma}(d_1\otimes_\sigma \mu_1,d_2\otimes_\sigma \mu_2).$$
		We have $\operatorname{rk}(\OL{E}_1\oplus \OL{E}_2)=\operatorname{rk}(\OL E_1)+\operatorname{rk}(\OL E_2).$
		\item \textbf{Tensor product}. 
		Define $\OL E_1\otimes \OL E_2$ as the pair $(E,h)$, where $E:=E_1\otimes_{\CO_K}E_2$, and over
		$$(E_1\otimes_{\CO_K}E_2)\otimes_{\CO_K,\sigma} \CC\simeq (E_1\otimes_{\CO_K,\sigma} \CC) \otimes_\CC (E_2\otimes_{\CO_K,\sigma} \CC)$$
		we define
		$$h_\sigma((e_1\otimes_\sigma\lambda_1\otimes e_2\otimes_\sigma\lambda_2), (d_1\otimes_\sigma \mu_1\otimes d_2\otimes_\sigma \mu_2)):=h_{1,\sigma}(e_1\otimes_\sigma\lambda_1, d_1\otimes_\sigma \mu_1)h_{2,\sigma}(e_2\otimes_\sigma\lambda_2,d_2\otimes_\sigma \mu_2).$$
		Then, $\operatorname{rk}(\OL E_1\otimes \OL E_2)=\operatorname{rk}(\OL E_1)\operatorname{rk}(\OL E_2).$
		\item\textbf{ Dual}. Given a Hermitian vector space $(V,h)$, we can identify $V$ with its dual $V^\vee=\Hom_\CC(V, \CC)$ by means of the application $v\mapsto H_v$, where $H_v$ is the functional defined by $H_v(u)=h(u,v)$. With this identification, $V^\vee$ inherits a Hermitian structure given by $h_{V^\vee}(H_u, H_v):=\OL{h(u,v)}$.
  		We thus define $\OL E_1^\vee$ as the pair $(E,h)$ where $E=E_1^\vee=\Hom_{\CO_K}(E, \CO_K)$ and $h$ is the family of Hermitian forms defined in 
		\begin{equation*}
			\begin{split}
				\Hom_{\CO_K}(E,\CO_K)\otimes_{\CO_K,\sigma}\CC&\simeq \Hom_\CC(E\otimes_{\CO_K,\sigma}\CC,\CO_K\otimes_{\CO_K,\sigma}\CC)\\
				&\simeq \Hom_\CC(E\otimes_{\CO_K,\sigma}\CC,\CC)\\
				&= \left( E\otimes_{\CO_K,\sigma}\CC\right) ^\vee
			\end{split}
		\end{equation*}
		as we explained before for Hermitian vector spaces. In particular, $\operatorname{rk}(\OL E_1^\vee)=\operatorname{rk}(\OL E_1)$.
		\item \textbf{Alternating products}. Given $m\in \mathbb{N}_{\geq 1}$, define $\bigwedge^m \OL E_1$ as the pair $(E,h)$ where $E=\bigwedge^m E_1$ and $h$ is the family of Hermitian forms defined by
		$$h_\sigma(e_1\wedge \cdots\wedge e_m, d_1\wedge\cdots\wedge d_m):=\det(h_{1,\sigma}(e_i,d_j)).$$
		We have $\operatorname{rk}\left( \bigwedge^m \OL E_1\right) =\left(\begin{array}{c}
			\operatorname{rk}(E_1)	\\m
			
		\end{array} \right)$. In particular, the  \emph{determinant} $\det\left( \OL E_1\right):=\bigwedge^{\operatorname{rk}(E_1)}\OL E_1$ is a Hermitian line bundle. 
		\item \textbf{Direct image.} Recall that $\eta:\operatorname{Spec}(\CO_K)\to \operatorname{Spec}(\ZZ)$ is the morphism induced by the inclusion~$\ZZ\hookrightarrow \CO_K$. Given a Hermitian vector bundle $\overline{E_1}=(E_1, h_1)$ over $S$, we can define a Hermitian vector bundle $\eta_*\overline{E}=(E,h)$ over $\Spec(\ZZ)$ in the following way: 
		Consider $E=E_1$ but as a free $\ZZ$-module of rank $[K:\QQ]\cdot \rk E$, and note that
  \[
   E\otimes_\ZZ \CC =E_1\otimes_\ZZ \CC \simeq E_1\otimes_{\CO_K}\left(\CO_K\otimes_ \ZZ\CC \right) \simeq \bigoplus_{\sigma}(E_1\otimes_{\CO_K,\sigma}\CC).
  \]
		Then, given $a=(a_\sigma),b=(b_{\sigma})\in 
  \bigoplus_{\sigma}(E_1\otimes_{\CO_K,\sigma}\CC),$ we define $h(a,b):=\sum_\sigma h_{1,\sigma}(a_\sigma,b_\sigma)$.
	\end{itemize}
	\begin{remark}
		The set $\widehat{\Pic}(S)$ has a group structure induced by the tensor product. The inverse element is induced by the dual and the identity element is the class of the trivial Hermitian line bundle~$\overline{\CO_K}$ defined in Example \ref{ex:trivialBundle}.
	\end{remark}

	\begin{example}\label{ex:ConstO(1)}
		For an integer~$n\geq 1$ consider the trivial Hermitian vector bundle $\OL{\CO_K^{\oplus n+1}}$ (see Example \ref{ex:trivialBundle}) and the projective space of lines $\PP^n(K)=\PP(K^{\oplus n+1})$. Each line $\ell \subset K ^{\oplus n+1}$ defines a finitely generated projective $\CO_K$-module $\CO_K^{\oplus n+1}\cap \ell$ whose corresponding complexifications are metrized using the restriction of the ambient Hermitian forms. Then, for each 
  point $P=\ell \in \PP^n(K)$
  we get a Hermitian line bundle denoted by $\overline{\CO_{\PP^n}(-1)_P}$. Its dual is denoted by $\overline{\CO_{\PP^n}(1)_P}.$
  The metric on $\overline{\CO_{\PP^n}(1)_P}\otimes_{\CO_{K,\sigma}}\CC$ constructed in this way is the Fubini--Study metric (see e.g. \cite[Example 1.2.45]{LazPAGI}). As usual, by taking duals and tensor powers, we can define for every~$a\in \ZZ$ the Hermitian line bundle~$\overline{\CO_{\PP^n}(a)_P}$.
	\end{example}
	\begin{remark}
		Note the analogy between the construction in the example above and the classical construction of the tautological line bundle of $\PP ^n$. In particular, we can interpret
		$\overline{\CO_{\PP^n}(-1)_P}$ as $\CO_{\PP^n }(-1)_P\cap \overline{\CO_K^{\oplus n+1}}$ where $\CO_{\PP^n }(-1)$ is the tautological geometric line bundle over the projective space $\PP^n$.
		
	\end{remark}

	\subsection{Arakelov degree}

    In this section, we will review some of the important properties of the Arakelov degree of Hermitian vector bundles. 

	\begin{defi} Let $\overline{L}=(L,h)$ be a Hermitian line bundle over $S$ and $s\in L\setminus \{0\}$ a non-trivial rational section. The  \emph{Arakelov degree of the line bundle} $\overline{L}$ is defined as
		\begin{equation*}
			\begin{split}
				\widehat{\deg}(\overline{L}):&=\log|L/\CO_Ks|-\sum_{\sigma\in \Sigma_K} \log \|s\|_\sigma \\	&=\sum_{\pp\subset \CO_K}v_{\pp}(s)\log N(\pp)-\sum_{\sigma\in \Sigma_K} \log \|s\|_{\sigma},	
			\end{split}
		\end{equation*}
		where $\pp$ runs over all non-zero prime ideals of $\CO_K$, $N(\pp)=|\CO_K/\pp|$ is the norm of the ideal~$\pp$  and $v_\pp(s)$ denotes the $\pp$-adic valuation of $s$ seen as a section of the invertible sheaf over $S$ associated to~$L$. 
  More concretely, if we consider the localization $L_\pp:=L\otimes_{\CO_K}\CO_{K,\pp}$, then $L_\pp$ is a free $\CO_{K,\pp}$-module of rank one and therefore there exists an isomorphism (a trivialization) $i_\pp:L_\pp\xrightarrow{\sim} \CO_{K,\pp}$. Then,  $v_\pp(s)=v_\pp(i_\pp(s\otimes 1))$ via this identification. It follows from the product formula that the definition above is independent of the choice of the non-trivial section $s$.

The \emph{Arakelov degree of a Hermitian vector bundle} $\overline{E}=(E,h)$ over $S$ is defined as
		$$\widehat{\deg}(\overline{E}):=\hdeg(\det(\overline{E})),$$
		and its  \emph{norm} is defined as $\operatorname{N}(\overline{E}):=e^{\widehat{\operatorname{deg}}(\overline{E})}\in \mathbb{R}_{>0}$.
		\end{defi}

 
	\begin{example}
		For the trivial Hermitian vector bundle $\overline{\CO_K^{\oplus r}}=(\CO_K^{\oplus r}, h)$, we have $\hdeg(\overline{\CO_K^{\oplus r}})=0.$
		Indeed, by definition $\det(\overline{\CO_K^{\oplus r}})=\overline{\CO_K}.$ Thus, choosing~$s=1$ we see immediately that~$\hdeg(\overline{\CO_K})=0$. 
	\end{example}

\begin{example}	\label{prop:degO(1)}
    Let $\overline{\CO_{\PP^n}(1)_P}$ defined as in Example \ref{ex:ConstO(1)}, and let $P=[x_0,\ldots,x_n]\in \PP^n(K)$. Then, it follows from \cite[Proposition 9.10]{MorArGeo} that
		\[
  \widehat{\deg}(\overline{\CO_{\PP^n}(1)_P})=\sum_{\pp\subset \CO_K}\log \max_i\{|x_i|_\pp\}+\sum_{\sigma\in \Sigma_K}\log \sqrt{\sum_{i}|x_i|^2_\sigma}.
        \]
	\end{example}

 The following example can be found in \cite[Section~1.2.2]{BostTIELHVB}. 
	
	\begin{example}
		Consider the \emph{canonical module} defined as
        $$\omega_{\CO_K}:=\Hom_{\ZZ}(\CO_K,\ZZ),$$
        which is a projective $\CO_K$-module by defining $a\cdot f$ via $(a\cdot f)(b):=f(ab)$ for $a, b\in \CO_K$, $f\in \omega_{\CO_K}$. 
        The Hermitian bundle $\overline{\omega_{\CO_K}}=(\omega_{\CO_K},h)$ is defined by imposing $\|\operatorname{tr}_{K/\QQ}\|_\sigma=1$ for all~$\sigma\in \Sigma_K$, where $\operatorname{tr}_{K/\QQ}:K\to \QQ$ is the usual trace map. We call this Hermitian bundle over $S$ the \emph{canonical Hermitian bundle}. It has Arakelov degree $\hdeg(\overline{\omega_{\CO_K}})=\log|\Delta_K|$. 
	\end{example}

We refer the reader to \cite[Section~1.3.1]{BostTIELHVB} for the following properties of the Arakelov degree.
 
	\begin{prop}
		Let $\overline{E}, \overline{F}$ be Hermitian vector bundles over $S$. Then:
		\begin{enumerate}
			\item $\hdeg(\overline{E}\otimes\overline{F})=\rk F\cdot \hdeg(\overline{E})+\rk E\cdot \hdeg (\overline{F})$.
			\item $\hdeg(\overline{E}\oplus\overline{F})=\hdeg(\overline{E})+\hdeg (\overline{F})$.
			\item $\hdeg(\overline{E}^\vee)=-\hdeg(\overline{E})$.
		\end{enumerate}
	\end{prop}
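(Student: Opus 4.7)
The plan is to reduce all three identities to the line bundle case by exploiting the standard natural isomorphisms of determinants, and then verify the line bundle statements directly from the definition of $\widehat{\deg}$.

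For the line bundle case, I would argue as follows. If $\overline{L}$ and $\overline{M}$ are Hermitian line bundles over $S$ with non-trivial rational sections $s\in L$ and $t\in M$, then $s\otimes t\in L\otimes M$ is also non-trivial. On the arithmetic side, the trivialization isomorphisms localize to tensor products, so $v_\pp(s\otimes t)=v_\pp(s)+v_\pp(t)$ for every non-zero prime $\pp\subset \CO_K$; on the Archimedean side, the definition of the tensor product Hermitian structure gives $\|s\otimes t\|_\sigma=\|s\|_\sigma\|t\|_\sigma$. Summing and taking logarithms yields $\widehat{\deg}(\overline{L}\otimes \overline{M})=\widehat{\deg}(\overline{L})+\widehat{\deg}(\overline{M})$. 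For the dual, the section $s^\vee\in L^\vee$ characterized by $s^\vee(s)=1$ satisfies $v_\pp(s^\vee)=-v_\pp(s)$ and $\|s^\vee\|_\sigma=\|s\|_\sigma^{-1}$ (since the identification $L\simeq L^\vee$ through $h$ is conjugate-linear and turns the norm into its reciprocal on rank-one spaces), which gives $\widehat{\deg}(\overline{L}^\vee)=-\widehat{\deg}(\overline{L})$.

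To handle the general case, I would invoke the canonical isometric isomorphisms
\[
\det(\overline{E}\oplus \overline{F})\simeq \det(\overline{E})\otimes \det(\overline{F}),\qquad \det(\overline{E}^\vee)\simeq \det(\overline{E})^\vee,
\]
together with
\[
\det(\overline{E}\otimes \overline{F})\simeq \det(\overline{E})^{\otimes \operatorname{rk}(F)}\otimes \det(\overline{F})^{\otimes \operatorname{rk}(E)},
\]
which are the standard multilinear-algebra identities promoted to Hermitian vector bundles (the $h_\sigma$ defined via determinants of Gram matrices in the earlier list of operations makes each identification an isometry after complexifying at every $\sigma\in\Sigma_K$). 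Combining these with the line bundle case and using $\widehat{\deg}(\overline{L}^{\otimes n})=n\,\widehat{\deg}(\overline{L})$ (which is an iterated application of the line bundle tensor formula) produces the three stated identities, with the ranks entering the tensor product formula through the exponents in the determinant isomorphism.

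The only delicate point, and the place where I would spend most of the effort, is making sure that the determinant isomorphisms above really are isometries for the induced Hermitian metrics at each $\sigma\in\Sigma_K$; once this is verified, everything else is bookkeeping using the product formula implicit in the definition of $\widehat{\deg}$. For the direct sum and dual this is straightforward from the block-diagonal structure of the Gram matrices, while for the tensor product it follows from the classical identity $\det(A\otimes B)=\det(A)^{\operatorname{rk}(B)}\det(B)^{\operatorname{rk}(A)}$ applied to the Gram matrices defining the Hermitian forms on $\det(E\otimes F)\otimes_{\CO_K,\sigma}\CC$.
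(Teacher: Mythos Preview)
Your argument is correct and is the standard one: reduce to line bundles via the canonical determinant isometries, then verify additivity and sign directly from the definition of $\widehat{\deg}$. The paper itself does not prove this proposition; it simply refers the reader to \cite[Section~1.3.1]{BostTIELHVB}, where essentially the same argument appears. So there is nothing to compare beyond noting that you have written out what the cited reference contains.
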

	
 \subsection{Arakelov divisors over $S$}

In this section we recall the language of Arakelov divisors and their relationship with Hermitian line bundles. 
 
	\begin{defi}
		An \emph{Arakelov divisor} over~$S$ is a formal finite sum 
    \begin{equation}\label{eq: Arakelov divisor}
    D=\sum_{\pp\subset \CO_K} x_\pp \pp+\sum_{\sigma\in \Sigma_K} x_\sigma \sigma,    
    \end{equation}
    with 
    $x_\pp\in\ZZ$, and $x_\sigma\in \RR$ satisfying~$x_{\overline{\sigma}}=x_{\sigma}$ for all~$\sigma \in \Sigma_K$. 
\end{defi}

Following \cite[Chapter~I, \S 5]{NeuANT} we define
$$K_{\RR}^+:=\bigg\{(x_\sigma)\in \prod_{\sigma\in \Sigma_K}\RR: x_{\overline{\sigma}}=x_\sigma\bigg\}.$$
We then have an isomorphism of groups
$$\Div(K) \simeq \bigg(\bigoplus_{\pp \subset \CO_K} \ZZ\bigg)\times K_\RR^+,\sum_{\pp\subset \CO_K} x_\pp \pp+\sum_{\sigma\in \Sigma_K} x_\sigma \sigma\mapsto \bigg((x_\pp)_{\pp \subset \CO_K},(x_{\sigma})_{\sigma \in \Sigma_K}\bigg).$$
On $K_\RR^+$ we consider the \emph{canonical inner product}
$$\langle (x_\sigma), (y_\sigma) \rangle_{K_\RR^+}:=\sum_{\sigma \in \Sigma_K}n_\sigma x_\sigma y_\sigma,$$
where~$n_\sigma=1$ or $2$ depending on whether $\sigma$ is real or complex. This induces a canonical measure on~$K_\RR^+$ giving volume 1 to any cube generated by an orthonormal basis.
    
    We endow $\Div(K)$ with the  product topology of the discrete topology on~$\ZZ$ and the Euclidean topology on~$K_\RR^+$, and with the product measure  of the counting measure  on~$\ZZ$ and the canonical measure on~$K_\RR^+$.

\begin{remark}
    In~\cite{GeerAndSchoof2000} Arakelov divisors are defined as formal sums as in~\eqref{eq: Arakelov divisor}, but with~$\sigma$ running over the Archimedean places of~$K$. If we denote by~$v_{\sigma}=v_{\overline{\sigma}}$ the Archimedean place associated to a pair of conjugated complex embeddings~$\sigma,\overline{\sigma}\in \Sigma_K$, then the map~$x_{\sigma}\sigma+x_{\overline{\sigma}}\overline{\sigma}\mapsto 2x_{v_\sigma}$ induces an equivalence between the two notions of Arakelov divisor, which is compatible with the constructions presented in this section. In particular, the canonical measure on~$K_\RR^+\simeq \RR^{r_1+r_2}$ corresponds to the usual Lebesgue measure on~$\RR^{r_1+r_2}$.
\end{remark}

    \begin{defi}
    The \emph{degree} of the Arakelov divisor $D$ is the real number
		$$\deg(D):=\sum_{\pp\subset \CO_K}\log(\N(\pp))x_\pp+\sum_{\sigma\in \Sigma_K} x_\sigma,$$
		and we define the \emph{norm} of $D$ as $\N(D):=e^{\deg(D)}.$
	\end{defi}

\begin{notation}
    		Given $f\in K^\times$, its associated \emph{principal Arakelov divisor} is defined as $$\operatorname{div}(f):=\sum_{\pp\subset \CO_K} x_\pp \pp+\sum_{\sigma\in \Sigma_K}x_\sigma \sigma,$$ with $x_\pp=\ord_\pp(f)$ and $x_\sigma=-\log|\sigma(f)|$. 
      The quotient group of $\Div(K)$ by its subgroup of principal Arakelov divisors is denoted by $\Pic(K)$ and is called the \emph{Picard--Arakelov group}.
\end{notation}

 To each Arakelov divisor $D=\sum_\pp x_\pp \pp+\sum_\sigma x_\sigma \sigma$, we can associated a fractional ideal of $K$ by means of $D\mapsto I_D=\prod_\pp \pp^{-x_\pp}$. Then, we have a surjective homomorphism 
	$$\Div(K)\to J(K),$$
	where $J(K)$ is the group of fractional ideals of $K$. In particular, if the Archimedean part of $D$ is zero, then $\N(D)=\N(I_D)^{-1}.$ Moreover, we have the following result (see e.g. \cite[Chapter~III, Proposition 1.11 and Theorem 1.12]{NeuANT}).
	
	\begin{lem} If we denote by $\Pic^0(K)$ the subgroup of degree zero Arakelov divisor classes in $\Pic(K)$ and by $\operatorname{Cl}(K)$ the ideal class group of the number field $K$, then we have an exact sequence
		$$0\rightarrow H/\Gamma\rightarrow \Pic^0(K)\rightarrow \textup{Cl}(K)\rightarrow 0,$$
		where $H:=\{(x_\sigma)\in K_\RR^+:\sum_\sigma x_\sigma =0\}$ and $\Gamma:=\mathrm{Log}(\CO_K ^\times)$ where $\mathrm{Log}(a)=(\log|\sigma(a)|)$ for~$a\in K^\times$. In particular, $\Pic^0(K)$ is compact.
	\end{lem}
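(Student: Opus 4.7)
The plan is to construct the second map of the sequence explicitly, verify exactness at each spot, and then conclude compactness from Dirichlet's unit theorem together with finiteness of the class group.

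First I would consider the map $\Phi:\Div(K)\to J(K)$, $D\mapsto I_D=\prod_\pp \pp^{-x_\pp}$, already introduced before the statement, and note that it is a surjective group homomorphism. Composing with the projection $J(K)\to \operatorname{Cl}(K)$ gives a surjection $\Div(K)\to \operatorname{Cl}(K)$. For $f\in K^\times$, the finite part of $\operatorname{div}(f)$ is $\sum_\pp \operatorname{ord}_\pp(f)\pp$, so $I_{\operatorname{div}(f)}=(f)^{-1}$, which is a principal ideal. Hence the map factors through $\Pic(K)$, and its restriction to $\Pic^0(K)$ gives a surjection $\overline{\Phi}:\Pic^0(K)\to \operatorname{Cl}(K)$ (surjectivity follows because every ideal class can be represented by an ideal, to which one can attach an Archimedean part of appropriate degree to land in $\Pic^0(K)$).

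Next I would compute the kernel of $\overline{\Phi}$. An element of $\ker\overline{\Phi}$ is represented by a divisor $D$ with $I_D$ principal, say $I_D=(f)$ for some $f\in K^\times$. Replacing $D$ by $D-\operatorname{div}(f^{-1})$, we may assume that the finite part of $D$ is zero. So $D=\sum_\sigma x_\sigma \sigma\in K_\RR^+$, and the condition $\deg(D)=0$ reads $\sum_\sigma x_\sigma =0$, i.e. $D\in H$. Two such divisors $D,D'\in H$ represent the same class in $\Pic^0(K)$ iff $D-D'=\operatorname{div}(g)$ for some $g\in K^\times$; since the finite parts vanish, $g$ must be a unit in $\CO_K^\times$, and then $D-D'=-\operatorname{Log}(g)\in \Gamma$. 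This gives a natural injection $H/\Gamma\hookrightarrow \Pic^0(K)$ whose image is exactly $\ker\overline{\Phi}$, proving exactness.

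For the final compactness claim, I would invoke Dirichlet's unit theorem, which states that $\Gamma=\operatorname{Log}(\CO_K^\times)$ is a full-rank lattice inside $H$ (of dimension $r_1+r_2-1$), so $H/\Gamma$ is a compact real torus. Combined with finiteness of $\operatorname{Cl}(K)$, the short exact sequence above exhibits $\Pic^0(K)$ as an extension of a finite group by a compact group, hence compact. The main point that does a bit of work here is verifying that the map is well-defined on the Picard--Arakelov group and that the kernel computation cleanly reduces to the image of the units, but both are direct manipulations with the definitions. The only substantive input is Dirichlet's theorem, which we use as a black box.
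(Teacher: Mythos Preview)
Your argument is correct and is essentially the standard proof; the paper itself does not prove the lemma but simply cites Neukirch \cite[Chapter~III, Proposition~1.11 and Theorem~1.12]{NeuANT}, whose proof proceeds exactly along the lines you describe (factoring the ideal map through $\Pic(K)$, identifying the kernel with $H/\Gamma$ via the unit group, and invoking Dirichlet's unit theorem and finiteness of $\operatorname{Cl}(K)$).
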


We endow~$\Pic(X)$ with the quotient measure of the product measure on~$\Div(X)$. On~$\Div^0(X)$ we consider the unique measure satisfying
\begin{equation}\label{eq: measure on Div^0}
\int_{\Div(X)}f(D)\,\textup{d}D=\int_\RR \int_{\Div^0(X)}f\left(D_0+xU\right)\textup{d}D_0\,\textup{d}x    
\end{equation}
for all~$f\in L^1(\Div(X))$, where~$U$ is the divisor\footnote{$U$ corresponds to a vector in~$K_{\RR}^+$ that is orthogonal to~$H$ and has norm 1 with respect to the canonical inner product.}
$$U:=\frac{1}{\sqrt{r_1+r_2}}\sum_{\sigma \in \Sigma_K}\frac{1}{n_\sigma}\sigma,$$
and endow~$\Pic^0 (K)$ with the corresponding quotient measure. The above lemma implies that the volume of~$\Pic^0 (K)$ 
equals
\begin{equation}\label{eq: vol(Pic^0(K))}
 \vol(\Pic^0 (K))=h_K\vol(H/\Gamma)=h_KR_K\sqrt{r_1+r_2},   
\end{equation}
where~$R_K$ and~$h_K$ are the regulator and the class number of $K$, respectively (see~\cite[Chapter~III, Proposition~7.5]{NeuANT}).

In Sections~\ref{sec: height zeta function projective} and~\ref{section:rational points} we will make use of the following formula.
 
	\begin{lem}\label{lem: integral over Pic(K)}
		Given a positive function $f\in L^1(\RR)$, 
  we have
  $$\int_{\Pic(K)} f(\deg(D))\;\textup{d}D=h_KR_K\int_\RR f(x)\,\textup{d}x.$$
	\end{lem}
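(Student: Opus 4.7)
The plan is to reduce the integral over $\Pic(K)$ to an iterated integral using the disintegration~\eqref{eq: measure on Div^0} on $\Div(K)$, and then pass to the quotient by principal divisors. Since principal divisors have degree zero (by the product formula), they lie entirely inside $\Div^0(K)$, so the decomposition $D = D_0 + xU$ with $D_0 \in \Div^0(K)$ and $x \in \RR$ descends to a measure-theoretic identification $\Pic(K) \simeq \Pic^0(K) \times \RR$, and the degree function on $\Pic(K)$ depends only on the $\RR$-component.

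The first step will be to compute $\deg(U)$ directly from the formula $\deg\big(\sum_{\sigma}x_\sigma \sigma\big)=\sum_{\sigma} x_\sigma$. Splitting the sum according to whether $\sigma$ is real or complex, and using that there are $r_1$ real embeddings with $n_\sigma=1$ and $2r_2$ complex embeddings with $n_\sigma=2$, a short calculation will give $\deg(U) = \sqrt{r_1+r_2}$. Consequently, for any $D_0 \in \Pic^0(K)$ and $x \in \RR$, we will have $\deg(D_0 + xU) = x\sqrt{r_1+r_2}$.

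Next, applying the descended version of~\eqref{eq: measure on Div^0} to the function $D\mapsto f(\deg(D))$ on $\Pic(K)$, I will obtain
\begin{equation*}
\int_{\Pic(K)} f(\deg(D))\, \textup{d}D \;=\; \int_{\RR} \int_{\Pic^0(K)} f\bigl(x\sqrt{r_1+r_2}\bigr)\, \textup{d}D_0\, \textup{d}x \;=\; \vol(\Pic^0(K)) \int_{\RR} f\bigl(x\sqrt{r_1+r_2}\bigr)\, \textup{d}x.
\end{equation*}
Performing the change of variables $u = x\sqrt{r_1+r_2}$ and inserting $\vol(\Pic^0(K)) = h_K R_K \sqrt{r_1+r_2}$ from~\eqref{eq: vol(Pic^0(K))} will then produce the claimed identity, with the two factors of $\sqrt{r_1+r_2}$ cancelling exactly.

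The only subtle point is to verify that the quotient measure on $\Pic(K)$ genuinely disintegrates as the product of the quotient measure on $\Pic^0(K)$ and Lebesgue measure on $\RR$ under the decomposition $D=D_0+xU$. This is routine, since the action of principal divisors by translation is confined to the $\Div^0(K)$ factor of~\eqref{eq: measure on Div^0}, so the Fubini-style decomposition passes to the quotient without modification.
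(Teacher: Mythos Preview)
Your proof is correct and follows essentially the same approach as the paper: both use the disintegration~\eqref{eq: measure on Div^0} to reduce to an integral over $\Pic^0(K)\times\RR$, compute $\deg(U)=\sqrt{r_1+r_2}$, and then insert $\vol(\Pic^0(K))=h_KR_K\sqrt{r_1+r_2}$ from~\eqref{eq: vol(Pic^0(K))} so that the $\sqrt{r_1+r_2}$ factors cancel. Your version is simply more detailed, making explicit the computation of $\deg(U)$ and the passage of the product decomposition to the quotient.
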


	\begin{proof}
  Property~\eqref{eq: measure on Div^0} implies
  $$\int_{\Pic(K)} f(\deg(D))\,\textup{d}D=\int_\RR \int_{\Pic^0(K)}f(x\deg(U))\,\textup{d} D\,\textup{d}x=\frac{\textup{vol}(\Pic^0(K))}{\deg(U)}\int_\RR f(x)\,\textup{d}x.$$
  Then, the result follows from~\eqref{eq: vol(Pic^0(K))} together with~$\deg(U)=\sqrt{r_1+r_2}$. This proves the lemma.
	\end{proof}

	It is worth mentioning that given a Hermitian line bundle $\overline{L}=(L,h)$ over $S$, we can associate to it an Arakelov divisor in the following way: Let $s\in L$ be a non-trivial rational section and define 
	$$\operatorname{div}(s):=\sum_{\pp\subset \CO_K} v_\pp(s)\pp +\sum_{\sigma \in \Sigma_K} (-\log|s|_\sigma)\sigma.$$
	The class $D_{\overline{L}}$ of~$\operatorname{div}(s)$ in $\Pic(K)$ is independent of the choice of the section $s$. Moreover, the degree $\widehat{\deg}(\overline{L})$ of the Hermitian line bundle $\overline{L}$ over $S$ is equal to the degree $\deg(D_{\overline{L}})$ of the Arakelov divisor class $D_{\overline{L}}$.

Conversely, following \cite[p.~32]{BostTIELHVB}, given an Arakelov divisor $D=\sum_{\pp} x_\pp \pp+\sum_{\sigma} x_\sigma \sigma$, we can construct a Hermitian line bundle $\overline{\CO(D)}=(\CO_K(D),h=\{h_\sigma\})$ by defining $\CO_K(D):=I_D=\prod_{\pp} \pp ^{-x_\pp}$, and for each embedding $\sigma \in \Sigma_{K}$ imposing that $\|1\|_\sigma=
 e^{-x_\sigma}.$

 \begin{notation}
     For an Arakelov divisor~$D$ and a rational section~$f\in I_D$ we write
     $$\|f\|_D=\|f\|_{\overline{\CO(D)}}=\sqrt{\sum_{\sigma \in \Sigma_K}\|f\|_\sigma^2}=\sqrt{\sum_{\sigma \in \Sigma_K}|\sigma(f)|^2e^{-2x_\sigma}}.$$
 \end{notation}

	Note that given Hermitian line bundles $\overline{L}_1=(L_1,h_1), \overline{L_2}=(L_2,h_2)$ with trivializations $i_\pp:L_{1,\pp}\to \CO_{K,\pp}$ and $j_\pp:L_{2,\pp}\to \CO_{K,\pp}$, the corresponding trivializations for $\overline{L_1}\otimes \overline{L_2}$, $k_\pp:(L_1\otimes_{\CO_K} L_2)_\pp\to \CO_{K,\pp}$ are given by
	$k_\pp(s\otimes t)=i_\pp(s)j_\pp(t)$. Thus, $v_\pp(s\otimes t)=v_\pp(s)+v_\pp(t)$. This shows that the map~$\widehat{\Pic}(S)\to \Pic(K)$ given by~$\overline{L}\mapsto D_{\overline{L}}$ is a group homomorphism. 
 
 From the above discussion, we conclude the following.
 
	\begin{prop}\label{prop:isomorphismBetweenPicardGroups}
		
		The map $\widehat{\Pic}(S)\to \Pic(K),\;\overline{L}\mapsto D_{\overline{L}}$ is a group isomorphism.
  
	\end{prop}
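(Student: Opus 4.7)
The plan is to verify that the map $\Phi\colon [\overline{L}]\mapsto [D_{\overline{L}}]$ admits a two-sided inverse given by $\Psi\colon [D]\mapsto [\overline{\CO(D)}]$, where $\overline{\CO(D)}$ is the Hermitian line bundle constructed in the paragraph preceding the statement. The preceding discussion has already established that $D_{\overline{L}}$ is independent of the chosen nonzero rational section and that $\Phi$ is a group homomorphism, so what remains is to show $\Psi$ descends to classes and inverts $\Phi$ on both sides.

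First I would check $\Psi$ is well-defined. If $D-D'=\operatorname{div}(f)$ for some $f\in K^\times$, write $D=\sum x_\pp \pp+\sum x_\sigma \sigma$; then $I_{D'}=(f)\,I_D$ and $x'_\sigma = x_\sigma+\log|\sigma(f)|$. Multiplication by $f^{-1}$ defines an $\CO_K$-module isomorphism $I_{D'}\to I_D$, and for any $a\in I_{D'}$ one computes
\[
\|f^{-1}a\|_D = |\sigma(f^{-1}a)|\,e^{-x_\sigma} = |\sigma(a)|\,e^{-x_\sigma-\log|\sigma(f)|} = |\sigma(a)|\,e^{-x'_\sigma}= \|a\|_{D'},
\]
so this isomorphism is an isometry at every Archimedean place. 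Hence $[\overline{\CO(D)}]=[\overline{\CO(D')}]$ in $\widehat{\Pic}(S)$.

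Next I would verify $\Phi\circ\Psi=\operatorname{id}$. Given $D=\sum x_\pp \pp+\sum x_\sigma \sigma$, apply $\Phi$ to $\overline{\CO(D)}=(I_D,h)$ using the rational section $s=1\in I_D$. Localizing at~$\pp$ gives $I_D\otimes_{\CO_K}\CO_{K,\pp}=\pp^{-x_\pp}\CO_{K,\pp}$, so a trivialization sends $1$ to a uniformizer to the power $x_\pp$, yielding $v_\pp(1)=x_\pp$; by construction $-\log\|1\|_\sigma=x_\sigma$. Therefore $\Phi(\overline{\CO(D)})=D$. For $\Psi\circ\Phi=\operatorname{id}$, given $\overline{L}=(L,h)$ with a nonzero rational section $s$, set $D:=D_{\overline{L}}$. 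The map $a\mapsto a\cdot s$, extended $K$-linearly from $K\simeq L\otimes_{\CO_K}K$, restricts to a well-defined $\CO_K$-module homomorphism $I_D\to L$: the local valuations match exactly because $L_\pp$ is generated over $\CO_{K,\pp}$ by $\pi_\pp^{-v_\pp(s)}s$. The identity $\|a\cdot s\|_\sigma=|\sigma(a)|\cdot\|s\|_\sigma=|\sigma(a)|\,e^{-x_\sigma}=\|a\|_D$ shows that this isomorphism is an isometry.

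The only mildly delicate point is the surjectivity of the module map $I_D\to L$ in the last step, since $L$ is only projective of rank one rather than free over~$\CO_K$. This is handled by recalling that every rank-one projective $\CO_K$-module becomes, after choosing a nonzero section, naturally identified with a fractional ideal of $K$, reducing the check to the local comparison of $\pp$-adic valuations already performed. All other steps are straightforward unwindings of the definitions of Arakelov divisor and the associated Hermitian line bundle.
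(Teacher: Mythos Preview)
Your proposal is correct and follows precisely the approach the paper intends: the paper constructs the inverse map $D\mapsto \overline{\CO(D)}$ in the paragraphs immediately preceding the proposition and then simply states ``From the above discussion, we conclude the following,'' leaving the verification that the two constructions are mutually inverse to the reader. You have carefully supplied exactly those omitted details (well-definedness on classes, and the two identities $\Phi\circ\Psi=\operatorname{id}$ and $\Psi\circ\Phi=\operatorname{id}$), so there is nothing to add.
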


By abuse of notation, we will employ this isomorphism to treat Arakelov divisor classes as Hermitian line bundles (and vice versa) when the context does not lead to confusion. For example, for an Arakelov divisor class $D\in \Pic(K)$ and $\overline{L}\in \widehat{\Pic}(S),$ we write $D\otimes \overline{L}$ to refer to the element $\overline{\CO(D)}\otimes \overline{L}\in \widehat{\Pic}(S).$
 
	\subsection{The Poisson--Riemann--Roch formula} 
	
	\begin{defi}
		Let $\overline{E}=(E, h)$ be a Hermitian vector bundle over $\Spec \ZZ$, and define 
		$$h^0(\overline{E}):=\log \sum_{v\in E} e^{-\pi\|v\|_{\overline{E}}^2},$$
  where~$\|\cdot\|_{\overline{E}}$ denotes the norm on~$E\otimes_\ZZ \CC$ associated to~$h$.   More generally, for a Hermitian vector bundle $\overline{E}=(E, h)$ over $S$ 
  we put
		$$h^0(\overline{E}):=h^0(\eta_*\overline{E}).$$
  We also 
  define the \emph{number of non-trivial sections} of $\overline{E}$ by $$\varphi(\overline{E}):=e^{h^0(\overline{E})}-1.$$
	\end{defi}

 It is worth mentioning that the previous definition coincides with the one given in \cite[Section~3]{GeerAndSchoof2000} for Arakelov divisors. More precisely, the authors consider an Arakelov divisor $D=\sum_{\pp\subset \CO_K} x_\pp \pp+\sum_\sigma x_\sigma \sigma,$ and define~$h^0(D)=\log k^0(D)$ where
$$k^0(D):=\sum_{f\in I_D} e^{-\pi \|f\|_D^2}.$$ 
Then, a simple computation shows that~$h^0(\overline{\CO(D)})=h^0(D)$. 
	\begin{lem}\label{lem:SectionsOfDirectSum}
  For $\overline{E}$ and $\overline{F}$ Hermitian vector bundles over $S$, we have that
	\begin{enumerate}
		\item $h^0(\overline{E}\oplus \overline{F})=h^0(\overline{E})+h^0(\overline{F})$, and
		\item $\varphi(\overline{E}\oplus \overline{F})=\varphi(\overline{E})+\varphi(\overline{F})+\varphi(\overline{E})\varphi(\overline{F})$.
	\end{enumerate}
	\end{lem}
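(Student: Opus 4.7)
The plan is to observe that both identities follow, essentially formally, from the definition of the direct-sum Hermitian structure and the way the exponential Gaussian sum factorises.

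First I would reduce to the case where the base is $\Spec\ZZ$. Since the direct image $\eta_*$ commutes with direct sums (the underlying abelian group of $\eta_*(\overline{E}\oplus \overline{F})$ is $E\oplus F$ with the Hermitian form obtained by summing the place-by-place forms), one has $\eta_*(\overline{E}\oplus \overline{F})\simeq \eta_*\overline{E}\oplus \eta_*\overline{F}$ as Hermitian bundles over $\Spec\ZZ$. Hence it suffices to prove (1) for Hermitian $\ZZ$-bundles.

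Next, I would unwind the definition of the direct-sum Hermitian structure given in the excerpt. If $\overline{E}\oplus \overline{F}=(E\oplus F,h)$, then on $(E\oplus F)\otimes_\ZZ \CC\simeq (E\otimes_\ZZ \CC)\oplus (F\otimes_\ZZ \CC)$ the norm satisfies
\[
\|(v,w)\|^{2}_{\overline{E}\oplus \overline{F}}=\|v\|^{2}_{\overline{E}}+\|w\|^{2}_{\overline{F}}.
\]
Consequently
\[
\sum_{(v,w)\in E\oplus F} e^{-\pi\|(v,w)\|^{2}_{\overline{E}\oplus\overline{F}}}
=\bigg(\sum_{v\in E} e^{-\pi\|v\|^{2}_{\overline{E}}}\bigg)\bigg(\sum_{w\in F} e^{-\pi\|w\|^{2}_{\overline{F}}}\bigg),
\]
and taking logarithms yields assertion (1). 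The only point that deserves care here is the interchange/factorisation of the sum, which is justified by absolute convergence (these are theta-type Gaussian sums on a lattice).

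For (2), I would simply use the definition $\varphi(\overline{E})=e^{h^{0}(\overline{E})}-1$. Substituting (1) gives
\[
\varphi(\overline{E}\oplus \overline{F})=e^{h^{0}(\overline{E})+h^{0}(\overline{F})}-1=\bigl(1+\varphi(\overline{E})\bigr)\bigl(1+\varphi(\overline{F})\bigr)-1=\varphi(\overline{E})+\varphi(\overline{F})+\varphi(\overline{E})\varphi(\overline{F}),
\]
which is exactly (2). There is no real obstacle in this lemma; the only step requiring a tiny bit of bookkeeping is the compatibility of $\eta_{*}$ with direct sums and Hermitian structures, but this is immediate from the definitions given earlier in the section.
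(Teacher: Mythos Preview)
Your proof is correct and follows essentially the same approach as the paper: reduce to $\Spec(\ZZ)$ via $\eta_*(\overline{E}\oplus\overline{F})=\eta_*\overline{E}\oplus\eta_*\overline{F}$, use additivity of the squared norm on the direct sum to factorise the Gaussian sum, take logarithms for (1), and then derive (2) algebraically from $\varphi=e^{h^0}-1$.
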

	
	\begin{proof}
		Since~$\eta_*(\overline{E}\oplus \overline{F})=\eta_*(\overline{E})\oplus \eta_*(\overline{F})$, it is enough to prove item~$(1)$ for Hermitian vector bundles over~$\Spec(\ZZ)$. In that case, we have
		\begin{equation*}
			\begin{split}
    \sum_{(x,y)\in E\oplus F} 
    e^{- \pi\|(x,y)\|_{\overline{E}\oplus \overline{F}}^2} &=\sum_{(x,y)\in E\oplus F} 
    e^{- \pi\left( \|x\|_{\overline{E}}^2+\|y\|_{\overline{F}}^2\right) }\\
				&=\left( \sum_{x\in E}
    e^{- \pi\|x\|_{\overline{E}}^2}\right) \left( \sum_{y\in F}
    e^{- \pi\|y\|_{\overline{F}}^2}\right). 
			\end{split}
		\end{equation*}
  Taking logarithms we conclude~$(1)$. Item~$(2)$ is a direct consequence of~$(1)$. This proves the lemma. 
	\end{proof}
 
	The following formula follows from Lemma~\ref{lem:SectionsOfDirectSum}(2) by induction.
	\begin{cor}\label{cor:varphiOfOplus}
		Let $\sigma_1, \ldots, \sigma_n$ be the elementary symmetric polynomials in $n$ variables and let $\overline{E}_1, \ldots, \overline{E}_n$ be Hermitian vector bundles over $S$. Then
		\[\varphi(\overline{E}_1\oplus\ldots\oplus \overline{E}_n)=\sum_{i=1}^n \sigma_i(\varphi(\overline{E}_1), \ldots, \varphi(\overline{E}_n)).\]
		\end{cor}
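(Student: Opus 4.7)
The plan is to proceed by induction on $n$, but the cleanest way is to first rewrite Lemma~\ref{lem:SectionsOfDirectSum}(2) in a more suggestive multiplicative form. Note that the identity $\varphi(\overline{E}\oplus \overline{F})=\varphi(\overline{E})+\varphi(\overline{F})+\varphi(\overline{E})\varphi(\overline{F})$ is equivalent to
\[
1+\varphi(\overline{E}\oplus \overline{F})=\bigl(1+\varphi(\overline{E})\bigr)\bigl(1+\varphi(\overline{F})\bigr),
\]
which also matches the exponentiation of Lemma~\ref{lem:SectionsOfDirectSum}(1), since $1+\varphi(\overline{E})=e^{h^0(\overline{E})}$.

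Iterating this identity (equivalently, using Lemma~\ref{lem:SectionsOfDirectSum}(1) applied $n-1$ times to get $h^0(\overline{E}_1\oplus \ldots \oplus \overline{E}_n)=\sum_{i=1}^n h^0(\overline{E}_i)$ and then exponentiating), one obtains
\[
1+\varphi(\overline{E}_1\oplus \ldots \oplus \overline{E}_n)=\prod_{i=1}^n \bigl(1+\varphi(\overline{E}_i)\bigr).
\]

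Finally, I would invoke the standard generating-function identity for elementary symmetric polynomials,
\[
\prod_{i=1}^n(1+x_i)=\sum_{i=0}^n \sigma_i(x_1,\ldots,x_n),
\]
with the convention $\sigma_0\equiv 1$. Applying this with $x_i=\varphi(\overline{E}_i)$ and subtracting the constant term $1$ from both sides yields the claimed formula. There is no real obstacle here; the only subtlety is recognizing that Lemma~\ref{lem:SectionsOfDirectSum}(2) is shorthand for the multiplicative identity above, after which the corollary is essentially the definition of elementary symmetric polynomials via the expansion of $\prod(1+x_i)$.
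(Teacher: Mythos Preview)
Your argument is correct and is essentially the induction the paper invokes (the paper's proof is the single line ``follows from Lemma~\ref{lem:SectionsOfDirectSum}(2) by induction''). Your reformulation via the multiplicative identity $1+\varphi(\overline{E}\oplus\overline{F})=(1+\varphi(\overline{E}))(1+\varphi(\overline{F}))$ and the generating-function expansion of $\prod_i(1+x_i)$ is exactly the clean way to carry out that induction.
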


We can now state the Poisson--Riemann--Roch formula for Hermitian vector bundles over the arithmetic curve $S$. See 
\cite[Section~2.2.2]{BostTIELHVB} for details.
 
	\begin{thm}\label{thm:RiemannRoch}
 Let~$\overline{E}$ be a Hermitian vector bundle over $S$. Then
		$$h^0(\overline{E})-h^0(\overline{\omega_{\CO_K}}\otimes \overline{E}^\vee)=\widehat{\deg}(E)-\frac{1}{2}(\log|\Delta_K|)\cdot \operatorname{rk}(\overline{E}).$$
		Equivalently, we have $ \varphi(\overline{E})=\left( \varphi(\overline{E}^\vee\otimes \overline{\omega_{\CO_K}})+1\right) \N(\overline{E})|\Delta_K|^{-\frac{\operatorname{rk}(\overline{E})}{2}}-1.$
			\end{thm}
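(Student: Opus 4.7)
The plan is to deduce the stated identity from the classical Poisson summation formula applied to a Gaussian on the Euclidean lattice underlying $\eta_*\overline{E}$. Since $h^0$ on $S$ is defined via pushforward to $\Spec(\ZZ)$, and since both sides of the claim are additive under such pushforward (the degree transforms by the standard $\widehat{\deg}(\eta_*\overline{E})=\widehat{\deg}(\overline{E})-\tfrac{1}{2}(\log|\Delta_K|)\operatorname{rk}(\overline{E})$, which is where the discriminant term will come from), I will work with an arbitrary Hermitian vector bundle $\overline{E}=(E,h)$ over $\Spec(\ZZ)$, treating $E$ as a lattice of full rank inside the Euclidean space $V:=E\otimes_\ZZ\RR$ with inner product $\Re\, h$.

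The first key step is to apply Poisson summation to the Gaussian $f(v):=e^{-\pi\|v\|^2}$ on $V$. This function is self-dual for the Fourier transform normalized by $\widehat{f}(w)=\int_V f(v)e^{-2\pi i\langle v,w\rangle}\,\mathrm{d}v$, so Poisson summation gives
\[
\sum_{v\in E} e^{-\pi\|v\|^2}=\frac{1}{\mathrm{vol}(V/E)}\sum_{w\in E^\ast} e^{-\pi\|w\|^2},
\]
where $E^\ast\subset V$ is the dual lattice of $E$ with respect to $\Re\, h$. Taking logarithms yields $h^0(\overline{E})-h^0(\overline{E}^\ast)=-\log\mathrm{vol}(V/E)$, so the remaining work is to (a) identify $\overline{E}^\ast$ with $\overline{\omega_{\CO_K}}\otimes\overline{E}^\vee$ (after pushforward) and (b) evaluate the covolume in terms of the Arakelov degree.

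For (a), in the case of the trivial Hermitian line bundle $\overline{\CO_K}$ the lattice dual of $\CO_K\subset K_\RR^+$ under the trace pairing is the inverse different $\mathfrak{d}_{K/\QQ}^{-1}$, which is exactly the underlying $\CO_K$-module of $\omega_{\CO_K}$; the normalization $\|\operatorname{tr}_{K/\QQ}\|_\sigma=1$ ensures that its Hermitian metric coincides with the one induced from the dual inner product on $K_\RR^+$. Tensoring with $\overline{E}^\vee$ then identifies the Euclidean dual of $\eta_*\overline{E}$ with $\eta_*(\overline{\omega_{\CO_K}}\otimes\overline{E}^\vee)$, hence $h^0(\overline{E}^\ast)=h^0(\overline{\omega_{\CO_K}}\otimes\overline{E}^\vee)$ by the very definition of $h^0$ over $S$. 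For (b), for a Hermitian line bundle $\overline{L}=\overline{\CO(D)}$ with $D$ having trivial Archimedean part one computes $\mathrm{vol}(V/L)=N(I_D)\sqrt{|\Delta_K|}=|\Delta_K|^{1/2}e^{-\widehat{\deg}(\overline{L})}$; this identity extends to arbitrary Hermitian line bundles by checking the effect of twisting the metric at each Archimedean place (a twist by $e^{x_\sigma}$ rescales the covolume by $e^{-n_\sigma x_\sigma}$, matching the change in $\widehat{\deg}$), and then to higher rank by passing to $\det(\overline{E})$ and using multiplicativity of volume under orthogonal direct sum decompositions. This yields
\[
\mathrm{vol}(V/E)=|\Delta_K|^{\operatorname{rk}(\overline{E})/2}e^{-\widehat{\deg}(\overline{E})}.
\]

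Combining these identifications gives the stated Riemann--Roch equality. The equivalent multiplicative form for $\varphi$ then follows by exponentiating and using $\varphi(\overline{E})=e^{h^0(\overline{E})}-1$. The main obstacle is the careful bookkeeping in step (b), namely tracking how the discriminant factor enters through the choice of normalization of the trace pairing on $\omega_{\CO_K}$ and how it interacts with the convention $\widehat{\deg}(\overline{\CO_K})=0$ (equivalently, how the pushforward $\eta_*$ shifts the degree by $-\tfrac{1}{2}(\log|\Delta_K|)\operatorname{rk}(\overline{E})$); once the conventions are aligned the Poisson step itself is immediate.
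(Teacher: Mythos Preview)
Your proposal is correct and follows the standard Poisson summation argument that underlies this result; the paper does not give its own proof but simply cites \cite[Section~2.2.2]{BostTIELHVB}, where exactly this approach (reduction to $\Spec(\ZZ)$, Gaussian Poisson summation, identification of the Euclidean dual lattice with $\eta_*(\overline{\omega_{\CO_K}}\otimes\overline{E}^\vee)$, and the covolume computation) is carried out. Your bookkeeping of the discriminant contribution via the pushforward formula $\widehat{\deg}(\eta_*\overline{E})=\widehat{\deg}(\overline{E})-\tfrac{1}{2}(\log|\Delta_K|)\operatorname{rk}(\overline{E})$ is also on point.
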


In \cite[Section 5, Corollary 1]{GeerAndSchoof2000} the authors prove the following bound for the number of non-trivial sections of Arakelov divisors with bounded degree.
 
	\begin{prop}\label{prop:cota_Geer_Schoof}
 Let $C\in \RR$ and let $D$ be an Arakelov divisor over $S$ with $\deg(D)\leq C$. Then
		$$\varphi(D):=\varphi(\overline{\mathcal{O}(D)})\leq \beta e^{-\pi n_Ke^{-\frac{2}{n_K}\deg(D)}},$$ 
		for some $\beta>0$ depending on $C$ and $K$, where $n_K=[K:\mathbb{Q}]$. 
	\end{prop}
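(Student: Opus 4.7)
The plan is to estimate $\varphi(D) = \sum_{f \in I_D \setminus \{0\}} e^{-\pi \|f\|_D^2}$ by combining an arithmetic lower bound on every $\|f\|_D$ with a geometric lattice-point count on $I_D$.

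First, I would apply the AM--GM inequality to the $n_K$ positive summands of $\|f\|_D^2 = \sum_{\sigma \in \Sigma_K} |\sigma(f)|^2 e^{-2x_\sigma}$ and use the product formula $\prod_{\sigma \in \Sigma_K} |\sigma(f)| = |N_{K/\QQ}(f)|$ to obtain
\[
\|f\|_D^2 \;\geq\; n_K \, |N_{K/\QQ}(f)|^{2/n_K}\, e^{-\tfrac{2}{n_K}\sum_{\sigma \in \Sigma_K} x_\sigma}.
\]
For any $f \in I_D \setminus \{0\}$, the fractional ideal $(f)\cdot I_D^{-1}$ is a nonzero integral ideal of $\CO_K$, so its norm is at least $1$, hence $|N_{K/\QQ}(f)| \geq N(I_D)$. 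Combined with $\log N(I_D) = -\sum_{\pp} x_\pp \log N(\pp)$, the two exponents collapse to $\deg(D)$ and one obtains the key arithmetic inequality
\[
\|f\|_D^2 \;\geq\; n_K\, e^{-\tfrac{2}{n_K}\deg(D)} \;=:\; r_0^2 \quad \text{for all } f \in I_D \setminus \{0\}.
\]

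Next, I would view $I_D$ as a rank-$n_K$ lattice inside $K_\RR = K \otimes_\QQ \RR$ equipped with the Euclidean norm $\|\cdot\|_D$. The previous bound says that its shortest nonzero vector has $\|\cdot\|_D$-length at least $r_0$; equivalently, the open $\|\cdot\|_D$-balls of radius $r_0/2$ centred at distinct points of $I_D$ are disjoint. A standard volume-packing argument then yields
\[
\#\bigl\{f \in I_D : 0 < \|f\|_D \leq R\bigr\} \;\leq\; c_{n_K}\,(R/r_0)^{n_K}, \qquad R \geq r_0,
\]
for some constant $c_{n_K}$ depending only on $n_K$.

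Finally, the identity $e^{-\pi t^2} = \int_t^\infty 2\pi R\, e^{-\pi R^2}\,dR$ together with Fubini's theorem rewrites
\[
\varphi(D) \;=\; \int_{r_0}^\infty 2\pi R\, e^{-\pi R^2}\; \#\bigl\{f \in I_D : 0 < \|f\|_D \leq R\bigr\}\,dR,
\]
into which the counting bound can be inserted. The change of variable $s = R^2 - r_0^2$ then factors out the desired Gaussian $e^{-\pi r_0^2} = e^{-\pi n_K e^{-2\deg(D)/n_K}}$, leaving a residual one-dimensional integral in $s$ that still depends on $r_0$ through factors of the form $(s + r_0^2)^{n_K/2}$. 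The main obstacle is to bound this residual by a constant depending only on $C$ and $K$: one handles separately the regimes $r_0 \geq 1$ and $r_0 < 1$, using in the latter the uniform lower bound $r_0 \geq \sqrt{n_K}\, e^{-C/n_K}$ supplied by $\deg(D) \leq C$ to keep the integral finite. This produces the desired constant $\beta = \beta(C,K) > 0$ and completes the estimate.
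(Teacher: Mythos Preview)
Your argument is correct. The AM--GM step giving $\|f\|_D^2 \geq n_K e^{-\frac{2}{n_K}\deg(D)}$ for every nonzero $f\in I_D$ is exactly the arithmetic core of the estimate, the packing bound is valid since the open balls of radius $r_0/2$ are disjoint, and the layer-cake integration together with the lower bound $r_0 \geq \sqrt{n_K}\,e^{-C/n_K}$ supplied by $\deg(D)\leq C$ does control the residual integral by a constant depending only on $C$ and $K$.

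The paper does not give a self-contained proof: it cites van der Geer--Schoof and indicates how to adapt their argument by choosing $u=\tfrac{1}{n_K}(C-\deg(D))$ and $D'=D+\sum_\sigma u\,\sigma$. That approach translates $D$ to a divisor $D'$ of fixed degree $C$, uses $\|f\|_D^2=e^{2u}\|f\|_{D'}^2$ together with the \emph{same} AM--GM shortest-vector bound applied to $D'$, and then splits $e^{-\pi e^{2u}\|f\|_{D'}^2}\leq e^{-\pi(e^{2u}-1)r_0'^2}\,e^{-\pi\|f\|_{D'}^2}$ to factor out the Gaussian decay and reduce to a uniform bound on $\varphi(D')$ over divisors of degree~$C$. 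Your route avoids the translation and the auxiliary uniform bound by working directly with $D$ and carrying the $r_0$-dependence through an explicit lattice-point count; this is more elementary and entirely self-contained, at the cost of having to analyse the residual integral by hand. The two arguments share the same arithmetic input and are essentially different packagings of one idea.
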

	\begin{remark} In the proof of \cite[Section~5, Corollary 1]{GeerAndSchoof2000}, the authors assumed that $\deg(D)\leq \frac{1}{2}\log(|\Delta_K|)$. Their proof can be adapted 
 to Arakelov divisors with $\deg(D)\leq C$ by considering
 $$u=\frac{1}{n_K}(C-\deg(D)) \text{ and }D'=D+\sum_\sigma u \sigma,$$
 instead of the~$u$ and~$D'$ used in their proof of \cite[Proposition 2]{GeerAndSchoof2000}. 

	\end{remark}

  In order to deal with Hirzebruch--Kleinschmidt varieties, we will need the following bound.  
 
	\begin{prop}\label{prop:boundForNumberSections}
		Let $\overline{E}$ be a split Hermitian vector bundle over~$S$, i.e., $\overline{E}=\overline{L}_1\oplus\cdots\oplus \overline{L}_r$ where each $\overline{L}_i$ is a Hermitian line bundle, and let $\overline{L}\in \widehat{\Pic}(S)$ 
  such that~$\widehat{\deg}(\overline{L})\leq C$ for some~$C\in \RR$. Then, there exist $\beta ,\gamma>0$ depending on $C,K$ and~$\overline{E}$, such that 
		\[
        \varphi(\overline{E}\otimes \overline{L})\leq \beta e^{-\gamma e^{-\frac{2}{n_K}{\widehat{\deg}}(\overline{L})}}
        \]
        where $n_K=[K:\mathbb{Q}]$.
		\end{prop}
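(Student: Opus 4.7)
The plan is to reduce to the line bundle case (Proposition~\ref{prop:cota_Geer_Schoof}) by exploiting the splitting of~$\overline{E}$. First I would use the canonical distributivity of tensor product over direct sum to write
\[
\overline{E}\otimes \overline{L} \;=\; \bigoplus_{i=1}^r \bigl(\overline{L}_i \otimes \overline{L}\bigr),
\]
and then invoke Corollary~\ref{cor:varphiOfOplus} to express
\[
\varphi(\overline{E}\otimes \overline{L}) \;=\; \sum_{i=1}^r \sigma_i\bigl(\varphi(\overline{L}_1 \otimes \overline{L}),\ldots,\varphi(\overline{L}_r\otimes \overline{L})\bigr).
\]
This converts the problem into bounding each of the scalar quantities $\varphi(\overline{L}_i\otimes \overline{L})$ uniformly in~$\overline{L}$.

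Next, I would control each factor individually via Proposition~\ref{prop:cota_Geer_Schoof}. Since $\widehat{\deg}(\overline{L}_i\otimes \overline{L}) = \widehat{\deg}(\overline{L}_i) + \widehat{\deg}(\overline{L}) \leq \widehat{\deg}(\overline{L}_i) + C$, and the $\overline{L}_i$ are fixed, the hypothesis of that proposition is met with a constant depending only on~$C$ and~$\overline{E}$. Setting $c_i := e^{-\frac{2}{n_K}\widehat{\deg}(\overline{L}_i)} > 0$ and $u := \exp\bigl(-\gamma\, e^{-\frac{2}{n_K}\widehat{\deg}(\overline{L})}\bigr)$ with $\gamma := \pi n_K \min_i c_i$, Proposition~\ref{prop:cota_Geer_Schoof} gives constants $\beta_i>0$ such that
\[
\varphi(\overline{L}_i\otimes \overline{L}) \;\leq\; \beta_i \exp\!\left(-\pi n_K c_i\, e^{-\frac{2}{n_K}\widehat{\deg}(\overline{L})}\right) \;\leq\; B\, u,
\]
where $B := \max_i \beta_i$, since the minimum of the $c_i$'s produces the weakest exponential decay in the family.

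Finally, I would substitute this uniform bound into the symmetric polynomial expansion. Because $\sigma_i$ has degree $i$, and $u\in(0,1]$, we get
\[
\varphi(\overline{E}\otimes \overline{L}) \;\leq\; \sum_{i=1}^r \binom{r}{i} B^i u^i \;=\; u \sum_{i=1}^r \binom{r}{i} B^i u^{i-1} \;\leq\; u\bigl((1+B)^r - 1\bigr),
\]
which is the desired inequality with $\beta := (1+B)^r - 1$ and $\gamma$ as above. The main subtlety is the bookkeeping: one must ensure that $\beta$ and $\gamma$ depend only on $C$, $K$, and $\overline{E}$ (not on $\overline{L}$), but this is automatic since the bound $\widehat{\deg}(\overline{L})\leq C$ forces $\widehat{\deg}(\overline{L}_i\otimes \overline{L})$ to lie in a fixed half-line, allowing a single application of Proposition~\ref{prop:cota_Geer_Schoof} to each summand with constants independent of~$\overline{L}$. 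No genuine obstacle arises beyond this verification.
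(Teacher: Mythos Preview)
Your proof is correct and follows essentially the same strategy as the paper: reduce to the rank-one case via the splitting and Corollary~\ref{cor:varphiOfOplus}, then apply Proposition~\ref{prop:cota_Geer_Schoof} to each $\overline{L}_i\otimes\overline{L}$. The only difference is that the paper simply asserts ``it is enough to prove the bound in the particular case $\overline{E}=\overline{L}_i$'' and leaves the recombination implicit, whereas you carry out the symmetric-polynomial bookkeeping explicitly and produce concrete values for $\beta$ and $\gamma$.
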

	
	\begin{proof}
  We have
  $$\varphi(\overline{E}\otimes \overline{L})= \varphi ((\overline{L}_1\oplus\cdots \oplus \overline{L}_r)\otimes \overline{L}).$$ As $\varphi ((\overline{L}_1\oplus\cdots \oplus \overline{L}_r)\otimes \overline{L})$ depends polynomially on $\varphi(\overline{L}_i\otimes \overline{L})$ by Corollary \ref{cor:varphiOfOplus}, it is enough to prove the bound in the particular case $\overline{E}=\overline{L}_i$. 
  Since $\widehat{\deg}(\overline{L}_i\otimes \overline{L}) =\widehat{\deg}(\overline{L}_i)+\widehat{\deg}(\overline{L})\leq C+\widehat{\deg}(\overline{L}_i)$, it follows from Proposition \ref{prop:cota_Geer_Schoof} that there exists $\beta_i>0$, depending on~$C$, $K$ and~$\overline{L}_i$, such that 
		\begin{equation*}
				\varphi(\overline{L}_i\otimes \overline{L})\leq \beta_i e^{-\pi n_Ke^{-\frac{2}{n_K}\widehat{\deg}(\overline{L_i}\otimes \overline{L})}}
    =  \beta e^{-\gamma_i e^{-\frac{2}{n_K}\widehat{\deg}(\overline{L})}}
		\end{equation*}
		were $\gamma_i:=\pi n_K e^{-\frac{2}{n_K}\widehat{\deg}(\overline{L_i})}$. 
  This proves the desired result. 
	\end{proof}

In the particular case when 
$\overline{L}$ is a Hermitian line bundle, the following uniform bound can be obtained (see \cite[Proposition 2.7.3]{BostTIELHVB}).

	\begin{prop}\label{prop:boundForHLB}
		Let  $\theta\in \RR_{\geq0}$ and $\overline{L}$ be a Hermitian line bundle over $S$ such that $\widehat{\deg}(\overline{L})\leq \theta$. Then~$h^0(\overline{L})\leq 1+\theta$. In particular, $\varphi(\overline{L})\leq e^{1+\theta}$.
	\end{prop}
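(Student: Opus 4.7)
The plan is to recast $h^0(\overline{L})$ as the logarithm of a lattice theta series and bound the latter by splitting into two regimes according to the sign of $\widehat{\deg}(\overline{L})$.

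Via Proposition~\ref{prop:isomorphismBetweenPicardGroups}, I would first write $\overline{L}=\overline{\mathcal{O}(D)}$ for an Arakelov divisor $D=\sum_{\pp} x_\pp\pp+\sum_{\sigma} x_\sigma\sigma$. The Minkowski embedding $s\mapsto\bigl(\sigma(s)e^{-x_\sigma}\bigr)_{\sigma\in\Sigma_K}$ identifies $\eta_*\overline{L}=I_D$ with a full-rank lattice $\Lambda\subset\mathbb{R}^{n_K}$ carrying the standard Euclidean norm; a short computation shows its covolume is $V=|\Delta_K|^{1/2}e^{-\widehat{\deg}(\overline{L})}$, and by the definition of $h^0$,
\[
e^{h^0(\overline{L})}\;=\;\sum_{\lambda\in\Lambda}e^{-\pi\|\lambda\|^2}.
\]

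For every nonzero $s\in I_D$ one has $|N_{K/\mathbb{Q}}(s)|\geq N(I_D)$, so AM--GM applied to $\|\lambda\|^2=\sum_\sigma|\sigma(s)|^2 e^{-2x_\sigma}$ yields the uniform lower bound $\|\lambda\|^2\geq n_K\,e^{-2\widehat{\deg}(\overline{L})/n_K}$ for every nonzero $\lambda\in\Lambda$. When $\widehat{\deg}(\overline{L})\leq 0$, this gives $\|\lambda\|\geq\sqrt{n_K}$ for all nonzero $\lambda$; combined with a Minkowski-type count of lattice points in Euclidean balls and the Gaussian decay, the tail $\sum_{\lambda\neq 0}e^{-\pi\|\lambda\|^2}$ is bounded by a constant $\leq e-1$, yielding $h^0(\overline{L})\leq 1\leq 1+\theta$.

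When $\widehat{\deg}(\overline{L})>0$, I would apply the Poisson summation formula to the self-Fourier-dual Gaussian on $\mathbb{R}^{n_K}$. Via the Arakelov dictionary this is precisely the exponential form of Riemann--Roch (Theorem~\ref{thm:RiemannRoch}):
\[
e^{h^0(\overline{L})}\;=\;V^{-1}\,e^{h^0(\overline{\omega_{\mathcal{O}_K}}\otimes\overline{L}^\vee)}.
\]
A refined theta-series estimate (exploiting the product structure of the Gaussian over $\Sigma_K$, or a Banaszczyk-type lattice inequality) bounds the dual factor by $e\,|\Delta_K|^{1/2}$; substituting gives $e^{h^0(\overline{L})}\leq e\cdot e^{\widehat{\deg}(\overline{L})}$, hence $h^0(\overline{L})\leq 1+\widehat{\deg}(\overline{L})\leq 1+\theta$. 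In either regime, the ``in particular'' claim $\varphi(\overline{L})\leq e^{1+\theta}$ follows at once from $\varphi(\overline{L})=e^{h^0(\overline{L})}-1$.

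The main technical obstacle is the uniform theta-series bound in the positive-degree regime: the direct AM--GM argument of the first case does not transfer to the dual bundle $\overline{\omega_{\mathcal{O}_K}}\otimes\overline{L}^\vee$, whose Arakelov degree $\log|\Delta_K|-\widehat{\deg}(\overline{L})$ need not be nonpositive, so closing the argument uniformly requires a more delicate lattice estimate rather than a simple reduction to the nonpositive-degree case.
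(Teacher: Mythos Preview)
The paper does not give its own proof of this proposition; it simply cites \cite[Proposition~2.7.3]{BostTIELHVB}. So there is nothing in the paper to compare against, and the question is only whether your sketch stands on its own.

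It does not, and you say so yourself. In the positive-degree regime you apply Riemann--Roch and then assert that $e^{h^0(\overline{\omega_{\mathcal{O}_K}}\otimes\overline{L}^\vee)}\leq e\,|\Delta_K|^{1/2}$, invoking only ``a refined theta-series estimate'' or ``a Banaszczyk-type lattice inequality''. But this is exactly the statement you are trying to prove, applied to the dual bundle with $\theta'=\tfrac12\log|\Delta_K|$. Since $\widehat{\deg}(\overline{\omega_{\mathcal{O}_K}}\otimes\overline{L}^\vee)=\log|\Delta_K|-\widehat{\deg}(\overline{L})$ lies in $(0,\log|\Delta_K|)$ whenever $0<\widehat{\deg}(\overline{L})<\log|\Delta_K|$, you can neither reduce to your Case~1 nor invoke Case~2 without circularity. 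Your final paragraph concedes this, so what you have written is a plan with an explicitly open step, not a proof.

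Case~1 is also not complete as stated. Knowing only that the shortest nonzero vector of the rank-$n_K$ lattice has length $\geq\sqrt{n_K}$ does not, via a generic sphere-packing count, immediately give $\sum_{\lambda\neq 0}e^{-\pi\|\lambda\|^2}\leq e-1$ with that specific constant; one must actually carry out the estimate (and the crude packing bound $(2R/\sqrt{n_K}+1)^{n_K}$ on the number of points in a ball of radius $R$ has to be integrated against the Gaussian with some care to land below $e-1$ uniformly in $n_K$). ``Combined with a Minkowski-type count'' is a pointer to an argument rather than an argument. If you want a self-contained proof, consult Bost's treatment directly: the uniform constant $1$ (independent of $K$) is obtained there by an argument that does not separate into your two regimes but rather controls the theta function globally.
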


\section{Height zeta function of the projective space}\label{sec: height zeta function projective}

In this section, we introduce a zeta function of the field $K$ defined in~\cite{GeerAndSchoof2000}. This zeta function will allow us to  study the analytic properties of the height zeta function of the projective space via a suitable integral representation. 

We first define the \emph{effectivity} $e(D)$ of an Arakelov divisor $D=\sum_\pp x_\pp\pp+\sum_\sigma x_\sigma \sigma$ in $\operatorname{Div}(K)$ as
 $$e(D):=\left\{\begin{array}{ll}
  e^{-\pi \|1\|^2_D}=e^{-\pi\sum_{\sigma}e^{-2x_\sigma}}    & \text{if $x_\pp\geq 0$ for all~$\pp\subset \CO_K$}, \\
   0   & \text{otherwise}.
 \end{array}
  \right.$$

	In \cite[Section~4]{GeerAndSchoof2000}, the authors define the zeta function associated to $K$ as
	$$\xi_K(s):=\int_{\Div(K)} \N(D)^{-s}e(D)\;\textup{d}D, \quad s\in \CC, \Re(s)>1,$$
	and they prove that\footnote{There is a misprint in the first power of $2$ appearing in the third line of the computation leading to the formula for $\xi_K(s)$ in \cite[p.~388]{GeerAndSchoof2000}. In the computation of the integral over~$t_\sigma$ for~$\sigma$ real, the factor 2 should appear in the denominator.} $\xi_K(s)=2^{-r_1}\left( \pi^{-s/2}\Gamma(s/2)\right)^{r_1} \left( (2\pi)^{-s}\Gamma(s)\right)^{r_2}\zeta_K(s)$, 	where
 $$\zeta_K(s):=\sum_{\left\{0\right\}\neq J\subseteq \CO_K}N(J)^{-s}$$ 
 is the Dedekind zeta function of the field $K$. In particular, $\xi_K(s)$ has meromorphic continuation to~$s\in \CC$.
 Moreover, the authors show that 
    \begin{equation}
		\xi_K(s)=\frac{1}{w_K} \int_{\Pic(K)}\N(D)^{-s}\varphi(D)\;\textup{d}D.
		\label{eq:equationOverPic}
	\end{equation}


 Let~$n\geq 1$ be an integer. Given $P=[x_0,\ldots,x_n]\in\PP ^n(K)$, we define the \emph{standard height} of $P$ as $$H_{\PP^n}(P):=\prod_{\pp\subset \CO_K}\max_i\{|x_i|_\pp\}\cdot\prod_{\sigma\in \Sigma_K} \sqrt{\sum_i|x_i|^2_\sigma}.$$
 It follows from Example $\ref{prop:degO(1)}$ that
    \[
    H_{\PP^n}(P)=\N(\overline{\CO_{\PP^n}(1)_P}) \textup{ for every }P\in \PP^n(K). 
    \]
	 The associated \emph{height zeta function} is 
	$$\Z_{\PP^n}(s):=\sum_{P\in \PP^n(K)}H_{\PP^n}(P)^{-s}=\sum_{P\in \PP^n(K)} \N(\overline{\CO_{\PP^n}(1)_P})^{-s},$$
 defined for~$s\in \CC$ with~$\Re(s)>n+1$ (the series converges absolutely and uniformly on compact subsets of this domain). 
We will study this function by means of Arakelov geometry.

As in~\cite[Section~3.2]{Maruyama2015}, we will work with a $K$-vector space $V$ of dimension $n+1$ that contains a  complete $\CO_K$-lattice $E$ which is the underlying finitely generated projective $\mathcal{O}_K$-module of a Hermitian vector bundle $\overline{E}=(E,h)$. Analogous to the construction carried out in Example \ref{ex:ConstO(1)}, given a point $P\in \PP(V)$, we define its height by
 	$$H_{\mathbb{P}(V)}(P):=\N(\overline{\CO_{\mathbb{P}(V)}(1)_P}).$$
  We also denote by~$\Z_{\PP(V)}(s)$ the corresponding height zeta function. In particular, considering $V=K^{\oplus (n+1)}$ we have $H_{{\mathbb{P}(K^{\oplus (n+1)})}}(P)=H_{\PP^n}(P)$.

	Recall that if $D \in \Pic(K)$, we denote the Hermitian line bundle $\overline{\mathcal{O}(D)}$ simply by $D$. As explained before, the key idea is to express the height zeta function~$\Z_{\PP^n}(s)$ as a suitable integral. To do so, we note that \eqref{eq:equationOverPic} implies that
	\begin{equation*}
		\begin{split}
			w_K\xi_K(s)H_{\mathbb{P}(V)}(P)^{-s}&=\int_{\Pic(K)}\N(D)^{-s}\varphi(D)H_{\mathbb{P}(V)}(P)^{-s}\,\textup{d}D\\
			&=\int_{\Pic(K)}(\N(D)N(\overline{\CO_{\mathbb{P}(V)}(1)_P}))^{-s}\varphi(D)\,\textup{d}D\\
			&=\int_{\Pic(K)}\N(D\otimes\overline{\CO_{\mathbb{P}(V)}(1)_P})^{-s}\varphi(D)\,\textup{d}D\\
			&=\int_{\Pic(K)}\N(D)^{-s}\varphi(D\otimes\overline{\CO_{\mathbb{P}(V)}(-1)_P})\,\textup{d}D.
		\end{split}
	\end{equation*}

	If we fix $D\in \Pic(K)$ and we let $P$ run through $\PP(V)$, then $D\otimes \overline{\CO_{\mathbb{P}(V)}(-1)_P}$ runs through all subline bundles of $\overline{\CO(D)}\otimes \overline{E}$. Therefore, the above formula implies that
	\begin{equation}
		\begin{split}
			w_K\xi_K(s)\Z_{\PP(V)}(s)&=\int_{\Pic(K)}\N(D)^{-s}\sum_{P\in \PP(V)}\varphi(D\otimes \overline{\CO_{\mathbb{P}(V)}(-1)_P})\,\textup{d}D\\
			&=\int_{\Pic(K)}\N(D)^{-s}\varphi(D\otimes \overline{E})\,\textup{d}D.
		\end{split}
		\label{eq:zetaFunctionMultipliedbyFactors}
	\end{equation}

\begin{notation}
    We denote by
\[
\Pic(K)_{-}:=\left\{D\in \Pic(K):\N(D)\leq \sqrt{|\Delta_K|}\right\}
\]
the set of Arakelov divisor classes with norm bounded above by $\sqrt{|\Delta_K|}$.
    
\end{notation}

	\begin{prop}\label{prop:LemmaOfIntegraloverPic}
		Let $V$ be a $K$-vector space of dimension $n+1$ containing a complete $\CO_K$-lattice $E$ which is the underlying finitely generated projective $\mathcal{O}_K$-module of a Hermitian vector bundle $\overline{E}=(E,h)$. Then:
\begin{enumerate}
    \item The integral
    \[
    \int_{\Pic(K)_{-}}\N(D)^{-s}\varphi(D\otimes \overline{E})\,\textup{d}D
    \]
    converges absolutely and uniformly for~$s$ in compact subsets of~$\CC$.
    \item For~$\Re(s)>n+1$ we have
    \begin{equation*}
			\begin{split}
	w_K\xi_K(s)\Z_{\PP(V)}(s)			
				&=\int_{\Pic(K)_{-}}\N(D)^{-s}\varphi(D\otimes \overline{E})\,\textup{d}D\\
				&\hspace{1cm}+\N(\overline{E})|\Delta_K|^{\frac{(n+1)}{2}-s}\int_{\Pic(K)_{-}}\N(D)^{s-(n+1)}\varphi(D\otimes \overline{E}^\vee)\,\textup{d}D\\
				&\hspace{2cm}+ R_Kh_K|\Delta_K|^{-\frac{s}{2}}\left(\frac{\N(\overline{E})}{s-(n+1)}-\frac{1}{s}\right).
			\end{split}
		\end{equation*}
  \end{enumerate}	
		\end{prop}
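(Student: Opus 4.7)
The plan is to start from the identity \eqref{eq:zetaFunctionMultipliedbyFactors} already derived, namely
\[
w_K \xi_K(s)\Z_{\PP(V)}(s) = \int_{\Pic(K)} \N(D)^{-s} \varphi(D \otimes \overline{E})\,\textup{d}D \quad (\Re(s) > n+1),
\]
and to decompose this integral over the splitting $\Pic(K) = \Pic(K)_{-} \sqcup \Pic(K)_{+}$, where $\Pic(K)_{+} := \{D : \N(D) > \sqrt{|\Delta_K|}\}$. The integral over $\Pic(K)_{-}$ is already the first term on the right-hand side of (2). For the integral over $\Pic(K)_{+}$, I would apply the Poisson--Riemann--Roch formula (Theorem~\ref{thm:RiemannRoch}) to the rank $n+1$ bundle $D \otimes \overline{E}$, substituting $\varphi(D \otimes \overline{E})$ by
\[
\bigl(\varphi(D^{-1} \otimes \overline{E}^\vee \otimes \overline{\omega_{\CO_K}}) + 1\bigr)\,\N(D)^{n+1}\N(\overline{E})\,|\Delta_K|^{-(n+1)/2} - 1,
\]
and split the resulting integral into three pieces.

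The piece containing $\varphi(D^{-1} \otimes \overline{E}^\vee \otimes \overline{\omega_{\CO_K}})$ is transformed via the change of variables $D' := D^{-1} \otimes \overline{\omega_{\CO_K}}$. This is a measure-preserving automorphism of $\Pic(K)$ (composition of group inversion and a translation, both preserving the Haar-type measure fixed in Section~\ref{sec hermitian vector bundles}) satisfying $\N(D') = |\Delta_K|/\N(D)$; thus it sends $\Pic(K)_{+}$ bijectively onto $\Pic(K)_{-}$ and, after collecting the powers of $\N(D)$, $\N(\overline{E})$ and $|\Delta_K|$, produces exactly the second term of (2). The two remaining pieces are integrals of the form $\int_{\Pic(K)_{+}} \N(D)^{-\alpha}\,\textup{d}D$ with $\alpha \in \{s, s-(n+1)\}$; by Lemma~\ref{lem: integral over Pic(K)} each reduces to a one-dimensional integral over $(\tfrac{1}{2}\log|\Delta_K|, \infty)$ that evaluates, for $\Re(s)>n+1$, to a simple rational function of $s$ times $h_K R_K |\Delta_K|^{-\alpha/2}$. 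Combining these contributions yields the third term of (2).

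For part (1), I would apply Proposition~\ref{prop:boundForNumberSections} to the split Hermitian vector bundle underlying $\overline{E}$ (the constant $C = \tfrac{1}{2}\log|\Delta_K|$ is a uniform upper bound for $\hdeg(D)$ on $\Pic(K)_{-}$), obtaining $\varphi(D \otimes \overline{E}) \leq \beta e^{-\gamma e^{-2\hdeg(D)/n_K}}$ on $\Pic(K)_{-}$. Bounding $|\N(D)^{-s}|$ by $\N(D)^{-\Re(s)}$ and invoking Lemma~\ref{lem: integral over Pic(K)}, the integral is dominated by
\[
\beta h_K R_K \int_{-\infty}^{(1/2)\log|\Delta_K|} e^{-\Re(s) x - \gamma e^{-2x/n_K}}\,\textup{d}x,
\]
whose integrand exhibits double-exponential decay as $x \to -\infty$ that crushes the single-exponential factor $e^{-\Re(s)x}$ for every $s \in \CC$. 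Uniform convergence on compact subsets of $\CC$ is immediate from the uniform boundedness of $\Re(s)$ there, so the integral defines a holomorphic function of $s \in \CC$.

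The main obstacle will be the bookkeeping in the change of variables $D \mapsto D^{-1} \otimes \overline{\omega_{\CO_K}}$: I must carefully verify that the map is measure-preserving under the normalization fixed in Section~\ref{sec hermitian vector bundles}, and then keep track of the interaction between the exponent shift $s \mapsto s-(n+1)$, the factor $|\Delta_K|$ arising from $\N(\overline{\omega_{\CO_K}}) = |\Delta_K|$, and the rank-dependent factors produced by Poisson--Riemann--Roch, so that the formula in (2) matches exactly. The convergence statement in (1) is then comparatively routine given the $\varphi$-bound from Proposition~\ref{prop:boundForNumberSections}.
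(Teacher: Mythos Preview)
Your proposal is correct and follows essentially the same approach as the paper's proof: split the integral over $\Pic(K)_{-}$ and $\Pic(K)_{+}$, apply Poisson--Riemann--Roch on the $\Pic(K)_{+}$ piece together with the measure-preserving involution $D\mapsto D^{-1}\otimes\overline{\omega_{\CO_K}}$, and evaluate the two elementary integrals via Lemma~\ref{lem: integral over Pic(K)}; for part~(1) both you and the paper invoke Proposition~\ref{prop:boundForNumberSections} and reduce to the same one-dimensional integral with double-exponential decay. The only cosmetic difference is that the paper first changes variables and then applies Riemann--Roch, whereas you reverse this order, but the computations are equivalent.
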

	\begin{proof}
Item~$(1)$ follows from Proposition~\ref{prop:boundForNumberSections}.  
Indeed, since Arakelov divisor classes $D\in \Pic(K)_{-}$ satisfy $\widehat{\deg}(D)\leq\frac{1 }{2}\log(|\Delta_K|)$, there are constants $\beta, \gamma>0$ depending on $K$ and $\overline{E}$ such that 
		\begin{equation*}
				\int_{\Pic(K)_{-}}\left| \N(D)^{-s}\right| \varphi(D\otimes \overline{E})\,\textup{d}D\leq \int_{\Pic(K)_{-}}\left| \N(D)^{-s}\right| \beta e^{-\gamma e^{-\frac{2}{n_K}\widehat{\deg}(D)}}\,\textup{d}D.
		\end{equation*}
  Using Lemma~\ref{lem: integral over Pic(K)} we get
  \begin{equation*}
    \int_{\Pic(K)_{-}}\left| \N(D)^{-s}\right| \varphi(D\otimes \overline{E})\,\textup{d}D\leq  R_K h_K \beta \int_{-\infty}^{\frac{1}{2}\log(|\Delta_K|)} e^{-\Re(s)x-\gamma e^{-\frac{2}{n_K}x}} \textup{d}x,
  \end{equation*}
  and this last integral converges uniformly for~$s$ in compact subsets of~$\CC$. This proves item~$(1)$.
  
  Now, if we define $\Pic(K)_+:=\{D\in \Pic(K):\N(D)\geq \sqrt{|\Delta_K|}\}$, then by~\eqref{eq:zetaFunctionMultipliedbyFactors} we have
		\begin{equation*}
			\begin{split}
				w_K\xi_K(s)\Z_{\PP(V)}(s)	
				=&\int_{\Pic(K)_{-}}\N(D)^{-s}\varphi(D\otimes \overline{E})\,\textup{d}D+\int_{\Pic(K)_+}\N(D)^{-s}\varphi(D\otimes \overline{E})\,\textup{d}D.
			\end{split}
		\end{equation*}
In order to compute the integral over~$\Pic(K)_+$, we consider the change of variables $D\mapsto \omega_{\CO_K}\otimes D^\vee$ to get
		\begin{equation*}
			\begin{split}
				\int_{\Pic(K)_+}\N(D)^{-s}\varphi(D\otimes \overline{E})\,\textup{d}D&=\int_{\Pic(K)_{-}}\N(\omega_{\CO_K}\otimes D^\vee)^{-s}\varphi(\omega_{\CO_K}\otimes D^\vee\otimes \overline{E})\,\textup{d}D\\
				&=\int_{\Pic(K)_{-}}|\Delta_K|^{-s}\N(D)^{s}\varphi(\omega_{\CO_K}\otimes D^\vee\otimes \overline{E})\,\textup{d}D.
			\end{split}
			\label{eq:convPic(K)+}
		\end{equation*}
		By Theorem \ref{thm:RiemannRoch} we have
		\begin{equation*}
			\begin{split}
				\varphi(D^\vee \otimes\omega_{\CO_K}\otimes \overline{E})
    &=e^{h^0(\omega_{\CO_K}\otimes (D\otimes \overline{E}^\vee)^\vee)}-1\\
				&=e^{h^0(D\otimes \overline{E}^\vee)-\deg(D\otimes \overline{E}^\vee)+\log|\Delta_K|\cdot \frac{\operatorname{rk}(D\otimes \overline{E}^\vee)}{2}}-1\\
				&=\left( \varphi(D\otimes \overline{E}^\vee)+1\right)\N(D)^{-(n+1)}\N(\overline{E})|\Delta_K|^{\frac{n+1}{2}}-1.
			\end{split}
		\end{equation*}
		Therefore,
		\begin{equation*}
			\begin{split}
				\int_{\Pic(K)_+}\N(D)^{-s}	&\varphi(D\otimes \overline{E})\,\textup{d}D\\
				&=\int_{\Pic(K)_{-}}\N(D)^{s}|\Delta_K|^{-s}\left(\left( \varphi(D\otimes \overline{E}^\vee)+1\right)\N(D)^{-(n+1)}\N(\overline{E})|\Delta_K|^{\frac{n+1}{2}}-1 \right) \,\textup{d}D\\
				&=\N(\overline{E})|\Delta_K|^{\frac{n+1}{2}-s}\int_{\Pic(K)_{-}}\N(D)^{s-(n+1)}\varphi(D\otimes \overline{E}^\vee)\,\textup{d}D\\
				&\hspace{2cm}+\N(\overline{E})|\Delta_K|^{\frac{n+1}{2}-s}\int_{\Pic(K)_{-}}\N(D)^{s-(n+1)}\,\textup{d}D\\
				&\hspace{3cm}-|\Delta_K|^{-s}\int_{\Pic(K)_{-}}\N(D)^{s}\,\textup{d}D.
			\end{split}
		\end{equation*}
		Now, for $\Re(s)>n+1$ we have (using Lemma~\ref{lem: integral over Pic(K)})
		\begin{equation*}
			\begin{split}
				\int_{\Pic(K)_{-}}\N(D)^{s-(n+1)}\,\textup{d}D&=\int_{\Pic(K)_{-}}e^{(s-(n+1))\deg(
					D)}\,\textup{d}D \\
     &=R_Kh_K\int_{-\infty}^{\frac{1}{2}\log(|\Delta_K|)}e^{(s-(n+1))x} \textup{d}x\\
				&=R_Kh_K\frac{|\Delta_K|^{\frac{s-(n+1)}{2}}}{s-(n+1)}.
			\end{split}
		\end{equation*}
		Analogously, for $\Re(s)>0$, we have
		$$\int_{\Pic(K)_{-}}\N(D)^{s}\,\textup{d}D=R_Kh_K
  \frac{|\Delta_K|^{\frac{s}{2}}}{s}.$$
  This proves item~$(2)$ and completes the proof of the proposition.
	\end{proof}

\begin{remark}
Proposition~\ref{prop:LemmaOfIntegraloverPic} also holds in the case~$n=0$, $V=K$ and~$E=\overline{\CO_K}$, in which case~$\Z_{\PP(V)}=1$. In particular:
    \begin{equation}\label{eq extension of xi_K}
    \begin{split}
	w_K\xi_K(s)	&=\int_{\Pic(K)_{-}}\N(D)^{-s}\varphi(D)\,\textup{d}D +|\Delta_K|^{\frac{1-s}{2}}\int_{\Pic(K)_{-}}\N(D)^{s-1}\varphi(D)\,\textup{d}D\\
				&\hspace{2cm}+ R_Kh_K|\Delta_K|^{-\frac{s}{2}}\left(\frac{1}{s-1}-\frac{1}{s}\right),
			\end{split}\end{equation}
   and this gives the meromorphic continuation of~$\xi_K(s)$ to~$\CC$ (as in~\cite[Section~4]{GeerAndSchoof2000}).
\end{remark}

	As consequence of Proposition \ref{prop:LemmaOfIntegraloverPic} we have the following result (see \cite[Theorem~3.2]{Maruyama2015}).
	\begin{thm}[Maruyama]\label{thm Maruyama}
		Let $V$ be a $K$-vector space of dimension $n+1$ containing a  complete $\mathcal{O}_K$-lattice $E$ which is the underlying finitely generated projective $\mathcal{O}_K$-module of a Hermitian vector bundle $\overline{E}=(E,h)$. Then, the function $\Z_{\PP(V)}(s)$
 has  meromorphic continuation to the whole complex plane, which is holomorphic for $\Re(s)>1,s\neq n+1$, and with a simple pole at $s=n+1$. Moreover, we have
		$$\operatorname{Res}_{s=n+1}\Z_{\PP(V)}(s)=\frac{R_Kh_K\N(\overline{E})}{w_K|\Delta_K|^{\frac{n+1}{2}}\xi_K(n+1)}.$$
	\end{thm}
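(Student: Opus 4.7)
The plan is to deduce the theorem directly from the identity
$$w_K\xi_K(s)\Z_{\PP(V)}(s) = I_1(s) + \N(\overline{E}) |\Delta_K|^{\frac{n+1}{2}-s} I_2(s) + R_Kh_K|\Delta_K|^{-s/2}\left(\frac{\N(\overline{E})}{s-(n+1)} - \frac{1}{s}\right),$$
where
$$I_1(s) := \int_{\Pic(K)_{-}}\N(D)^{-s}\varphi(D\otimes \overline{E})\,\textup{d}D, \qquad I_2(s) := \int_{\Pic(K)_{-}}\N(D)^{s-(n+1)}\varphi(D\otimes \overline{E}^\vee)\,\textup{d}D,$$
which has been established in Proposition~\ref{prop:LemmaOfIntegraloverPic}(2) for $\Re(s)>n+1$. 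The point is that every piece on the right-hand side already has a natural meromorphic extension to all of $\CC$, so the meromorphic continuation of $\Z_{\PP(V)}(s)$ will follow by dividing through by $w_K\xi_K(s)$.

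First I would note that by Proposition~\ref{prop:LemmaOfIntegraloverPic}(1) the integrals $I_1(s)$ and $I_2(s)$ (the latter being of the same shape with $\overline{E}^\vee$ in place of $\overline{E}$, up to the change of variable $s\mapsto (n+1)-s$ performed in the derivation of the formula) converge absolutely and uniformly on compact subsets of $\CC$, hence define entire functions of $s$. The explicit remainder
$$R_Kh_K|\Delta_K|^{-s/2}\left(\frac{\N(\overline{E})}{s-(n+1)} - \frac{1}{s}\right)$$
is a meromorphic function on $\CC$ with simple poles at $s=n+1$ and $s=0$. Combined, the right-hand side of the displayed identity extends to a meromorphic function $F(s)$ on $\CC$, and the meromorphic continuation of $\xi_K(s)$ established in~\cite[Section~4]{GeerAndSchoof2000} (compare~\eqref{eq extension of xi_K}) then yields a meromorphic continuation of $\Z_{\PP(V)}(s) = F(s)/(w_K\xi_K(s))$ to $\CC$.

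Next I would isolate the poles in the region $\Re(s)>1$. Since $\zeta_K(s)$ is given by an absolutely convergent Euler product for $\Re(s)>1$ it is non-vanishing there, and the gamma factors are non-vanishing as well, so $\xi_K(s)$ has no zeros in $\Re(s)>1$; it is moreover finite for $\Re(s)>1$ (its only pole being at $s=1$). In this half-plane the only pole of $F(s)$ is the simple pole of the remainder at $s=n+1$, which therefore transfers to a simple pole of $\Z_{\PP(V)}(s)$ at $s=n+1$. The residue is computed as
$$\operatorname{Res}_{s=n+1} F(s) = R_Kh_K|\Delta_K|^{-(n+1)/2}\N(\overline{E}),$$
so that
$$\operatorname{Res}_{s=n+1}\Z_{\PP(V)}(s) = \frac{R_Kh_K\N(\overline{E})}{w_K|\Delta_K|^{(n+1)/2}\xi_K(n+1)},$$
matching the statement of the theorem.

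The only real subtlety is the holomorphy claim on $\Re(s)>1$ away from $s=n+1$: one must rule out cancellation or new singularities coming from zeros of $\xi_K$, which is handled by the Euler product argument above. Everything else is essentially a bookkeeping exercise built on the already proved Proposition~\ref{prop:LemmaOfIntegraloverPic}, which contains all the analytic work.
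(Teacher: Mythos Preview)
Your proof is correct and follows exactly the approach the paper intends: the theorem is stated as a direct consequence of Proposition~\ref{prop:LemmaOfIntegraloverPic}, and you have correctly filled in the details by using part~(1) to see that the two integrals are entire, reading off the only pole in $\Re(s)>1$ from the rational remainder, and dividing through by $w_K\xi_K(s)$ (non-vanishing there by the Euler product). Your residue computation matches the statement verbatim.
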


Choosing~$V=K^{\oplus(n+1)}$ and $\overline{E}=\overline{\CO_K^{\oplus(n+1)}}$, in which case $\N(\overline{E})=1$, we obtain the following corollary as an application of Theorem~\ref{thm:tauberianThm}.
 
	\begin{cor}[Schanuel's estimate] \label{cor:SchanuelTheorem}
 Let $N(\PP^n,B):=\#\{P\in \PP^n(K):H_{\PP^n}(P)\leq B\}$. Then
		$$N(\PP^n,B)\sim CB^{n+1} \quad \text{as }B\to \infty,$$
		with
		$$C:=\frac{R_Kh_K}{(n+1)w_K|\Delta_K|^{\frac{n+1}{2}}\xi_K(n+1)}.$$
	\end{cor}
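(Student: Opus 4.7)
The proof is a direct application of the Tauberian theorem (Theorem~\ref{thm:tauberianThm}) to the height zeta function $\Z_{\PP^n}(s)$, once we specialize Theorem~\ref{thm Maruyama} to the standard arithmetic data on $\PP^n$.

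First I would take $V=K^{\oplus (n+1)}$ and $\overline{E}=\overline{\CO_K^{\oplus (n+1)}}$, the trivial Hermitian vector bundle of Example~\ref{ex:trivialBundle}. By construction, the projective space of lines $\PP(V)$ coincides with $\PP^n$, and the Hermitian line bundle $\overline{\CO_{\PP(V)}(1)_P}$ associated to a point $P\in \PP(V)$ is precisely the one used in Example~\ref{ex:ConstO(1)} to define $H_{\PP^n}$. Hence $H_{\PP(V)}(P)=H_{\PP^n}(P)$ for all $P\in \PP^n(K)$, so that $\Z_{\PP(V)}(s)=\Z_{\PP^n}(s)$. Moreover, $\det(\overline{E})=\overline{\CO_K}$ has trivial Arakelov degree, so $\N(\overline{E})=1$.

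Next I apply Theorem~\ref{thm Maruyama} to this specific $(V,\overline{E})$: it gives that $\Z_{\PP^n}(s)$ extends to a meromorphic function on $\CC$, holomorphic in the half-plane $\Re(s)>1$ except for a simple pole at $s=n+1$, and with residue
\[
\operatorname{Res}_{s=n+1}\Z_{\PP^n}(s)=\frac{R_Kh_K}{w_K|\Delta_K|^{\frac{n+1}{2}}\xi_K(n+1)}.
\]
Writing $\Z_{\PP^n}(s)=\dfrac{g(s)}{s-(n+1)}$ in a neighborhood of $s=n+1$, the function $g(s)$ is holomorphic on a half-plane $\Re(s)>n+1-\varepsilon$ for some $\varepsilon>0$ (using that $\xi_K(n+1)\neq 0$, since $n+1\geq 2$ lies in the domain of absolute convergence of the Dedekind $\zeta$-function), and $g(n+1)$ equals the residue above.

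Finally, I apply Theorem~\ref{thm:tauberianThm} to the counting function $N(\PP^n,B)$ with $a=n+1$ and $b=1$. The absolute convergence of $\Z_{\PP^n}(s)$ for $\Re(s)>n+1$ is standard (and follows from the discussion preceding Theorem~\ref{thm Maruyama}). The Tauberian theorem then yields
\[
N(\PP^n,B)\sim \frac{g(n+1)}{0!\,(n+1)}B^{n+1}=\frac{R_Kh_K}{(n+1)w_K|\Delta_K|^{\frac{n+1}{2}}\xi_K(n+1)}B^{n+1},
\]
as $B\to \infty$, which is the stated formula. There is no real obstacle here beyond verifying that Theorem~\ref{thm Maruyama} indeed applies with $\N(\overline{E})=1$ and that the hypotheses of the Tauberian theorem are satisfied; all the substantive analytic work is already encoded in Proposition~\ref{prop:LemmaOfIntegraloverPic} and Theorem~\ref{thm Maruyama}.
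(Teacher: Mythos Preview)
Your proof is correct and follows exactly the same approach as the paper: specialize Theorem~\ref{thm Maruyama} to $V=K^{\oplus(n+1)}$, $\overline{E}=\overline{\CO_K^{\oplus(n+1)}}$ (so $\N(\overline{E})=1$), and then apply the Tauberian Theorem~\ref{thm:tauberianThm} with $a=n+1$, $b=1$. You have in fact supplied more detail than the paper, which simply states that the corollary follows from this specialization together with Theorem~\ref{thm:tauberianThm}.
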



	\begin{remark}
 \label{remark:metrization}
		Note that the asymptotic constant given above is in general different from  to the one obtained by Schanuel in \cite{Sch79}. This is due to the fact that Schanuel uses an $\ell^\infty$ 
  norm on the non-Archimedean places, while we use an $\ell^2$ norm. 
  Also, compare this result with the one obtained by  Guignard in \cite[Cor. 3.4.2]{GuignarZetaFunctions}, taking into account that Guignard defines $\N(\overline{E})=e^{-\widehat{\deg}(\overline{E})}.$
	\end{remark}

	\section{Counting rational points on Hirzebruch--Kleinschmidt varieties}\label{section:rational points}

In this section, we construct an Arakelov height function~$H_L$ associated to a big line bundle class~$L\in \Pic(X_d(a_1,\ldots,a_r))$, and describe the asymptotic growth of the number~$N(U,H_L,B):=\#\{P\in U(K):H_L(P)\leq B\}$, where~$U=U_d(a_1,\ldots,a_r)$ is the good open subset of~$X_d(a_1,\ldots,a_r)$, as defined in the Introduction. The main results are Theorems~\ref{thm:generalVersionTheorem} and~\ref{thm:degenerate_case_general_theorem}, which are used in Section~\ref{subsection proof of main thm anticanonical} to prove Theorem~\ref{main thm anticanonical}. Finally, in Section~\ref{sec accumulation of rational points}, we briefly discuss subvarieties that accumulate more rational points than others and provide criteria to determine when this occurs.




\subsection{Heights induced by big line bundles}\label{subsection:general case} 

Let $X=X_d(a_1,\ldots,a_r)$ be a Hirzebruch--Kleinschmidt variety  of dimension $d=r+t-1$ defined over the number field $K$ (see Definition \ref{defi:HK varieties}). Recall that 
 $$\pi: X=\PP(\CO_{\PP^{t-1}} \oplus \CO_{\PP^{t-1}}(-a_r)\oplus \CO_{\PP^{t-1}}(a_1-a_r)\oplus \cdots\oplus \CO_{\PP^{t-1}}(a_{r-1}-a_r) )\to \PP^{t-1}$$ 
 is a projective vector bundle over $\PP^{t-1}$.  

 Let us define 
 \begin{equation}\label{eq def of W}
   \mathscr{W}:= \CO_{\PP^{t-1}} \oplus \CO_{\PP^{t-1}}(a_r)\oplus  \cdots\oplus \CO_{\PP^{t-1}}(a_r-a_{r-1}),  
 \end{equation}
	and recall that for $P\in X(K)$ the fiber 
 $$\CO_X(-1)_P=\ell\subseteq \left( \CO_{\PP^{t-1}} \oplus \CO_{\PP^{t-1}}(-a_r)\oplus \CO_{\PP^{t-1}}(a_1-a_r)\oplus \cdots\oplus \CO_{\PP^{t-1}}(a_{r-1}-a_r)  \right) ^\vee_{\pi(P)}, $$ 
 is given by the one-dimensional subspace $\ell$ of the $(r+1)$-dimensional vector space
    \[
    \left(\CO_{\PP^{t-1}} \oplus \CO_{\PP^{t-1}}(-a_r)\oplus  \cdots\oplus \CO_{\PP^{t-1}}(a_{r-1}-a_r) \right) ^\vee_{\pi(P)}=
    \mathscr{W}_{\pi(P)},
       \]
	   corresponding to the point $P$ in $\pi^{-1}(\pi(P))=\PP(\mathscr{W}_{\pi(P)})$. The vector space~$\mathscr{W}_{\pi(P)}$ contains the Hermitian vector bundle 
    $$ \overline{(\CO_{\PP^{t-1}})_{\pi(P)}} \oplus \overline{\CO_{\PP^{t-1}}(a_r)_{\pi(P)}}\oplus  \cdots\oplus \overline{\CO_{\PP^{t-1}}(a_r-a_{r-1})_{\pi (P)}},$$
    which we denote by~$\overline{\mathscr{W}_{\pi(P)}}$ for simplicity. By endowing $\ell \cap \overline{\mathscr{W}_{\pi(P)}}$ with the restriction of the  Hermitian forms on~$\overline{\mathscr{W}_{\pi (P)}}$, we obtain the Hermitian line bundle~$\overline{\CO_X(-1)_P}$. As usual, by taking duals and tensor powers we  define~$\overline{\CO_X(a)_P}$ for any~$a\in \ZZ$. 

Given $L=\lambda h+\mu f \in \Pic(X)$ big and~$P\in X(K)$, we put    
    $$\overline{L_P}:=\overline{\CO_X(\lambda)_P}\otimes \overline{\CO_ {\PP ^{t-1}}(\mu)_{\pi (P)}}.$$
This induces an \emph{adelic metric} on~$L$ as defined in~\cite[\emph{D\'efinition~1.4}]{Pey02}. We refer to this as the \emph{standard metric} on~$L$.
    
We can now define the \emph{standard height function}~$H_L$ over $X(K)$ associated to~$L$ as
    \[
    H_L(P):=\N(\overline{L_P}).
    \]
    More explicitly, we have 
\begin{equation}\label{eq H_L of P}
          H_L(P)= e^{\lambda \widehat{\deg} \left(\overline{\CO_{X}(1)_P}\right) }e^{\mu \widehat{\deg}\left(  \overline{\CO_{\PP ^{t-1}}(1)_{\pi(P)}} \right)}= H_{\PP(\mathscr{W}_{\pi(P)})}(P)^{\lambda }H_{\PP ^{t-1}}\left( \pi(P)\right)^{\mu}.
   \end{equation}
Associated to~$L=\lambda h+\mu f $ as above, we define
\begin{equation}\label{eq lambda_D and mu_D}
\lambda_L:=\frac{r+1}{\lambda}, \quad \mu_L:=\frac{(r+1)a_r+t-|\mathbf{a}|}{\mu}.
\end{equation} 
Then, it easily follows from Proposition~\ref{prop effective cone} that
\begin{equation}\label{eq a(L) and b(L)}
     a(L)=\max\{\lambda_L,\mu_L\}\quad \text{and}\quad b(L)=\left\{\begin{array}{ll}
2     &  \text{if }\lambda_L=\mu_L, \\
1     &  \text{if }\lambda_L\neq \mu_L.
\end{array}
\right.
\end{equation}
    
As in the Introduction, we restrict our attention to rational points in a specific open subset~$U\subseteq X$. This is done in order to ensure that~$N(U,H_L,B)$ is finite for all~$B>0$, and to avoid possible proper subvarieties with too many rational points.

Recall that in Section~\ref{sec restriction of divisors} we defined the projective subbundle~$F=\mathbb{P}(\mathscr{Y})\subset X_d(a_1,\ldots,a_r)$ where
$$\mathscr{Y}=\CO_{\PP^{t-1}}(-a_r)\oplus \CO_{\PP^{t-1}}(a_1-a_r)\oplus \cdots\oplus \CO_{\PP^{t-1}}(a_{r-1}-a_r).$$
The following definition was given in the Introduction.

\begin{defi}\label{defi:good open}
Given integers $r\geq 1$, $t\geq 2$ and $0\leq a_1 \leq \cdots \leq a_r$, we define the \emph{good open subset} of $X=X_d(a_1,\ldots,a_d)$ as 
$$U_d(a_1,\ldots,a_r):=X_d(a_1,\ldots,a_r)\setminus F.$$
\end{defi}

\begin{remark}
Given a big line bundle class~$L\in \Pic(X)$, it is know that there exists a dense open subset~$U_L\subseteq X$ such that $N(U_L,H_L,B)$ is finite for every~$B>0$ (see e.g.~\cite[Proposition~2.12]{Peyre2021}). Our good open subset~$U$ serves as such a dense open~$U_L$ for every big~$L$. 
\end{remark}

The first main result of this section is the following theorem, where we assume~$a_r>0$. The easier case when~$a_r=0$ is presented later in this section (see Theorem~\ref{thm:degenerate_case_general_theorem}). Recall that, for~$m\geq 1$, we defined~$\Z_{\PP^m}(s)$ as the height zeta function of the projective space~$\PP^m$ with respect to the standard height function (see Section~\ref{sec: height zeta function projective}). Here, we extend this definition by putting~$\Z_{\PP^m}(s):=1$ (resp.~$0$) if~$m=0$ (resp.~$m=-1$).

\begin{thm}
 \label{thm:generalVersionTheorem}
     Let $X=X_d(a_1,\ldots,a_r)$ be a Hirzebruch--Kleinschmidt variety over the number field~$K$ of dimension~$d=r+t-1$, and let~$L=\lambda h+\mu f\in \Pic(X)$ big. Assume~$a_r>0$. Then, we have
        $$N(U,H_L,B) \sim C_{L,K}B^{a(L)}\log(B)^{b(L)} \quad \text{as }B\to \infty,$$
     with~$C_{L,K}$ given by
     $$\left\{
     \begin{array}{ll}
    \vspace{0.2cm}  \frac{R_K^2h_K^2|\Delta_K|^{-\frac{(d+2)}{2}}}{w_K^2 (r+1) \mu \xi_K(r+1)\xi_K(t)}    &  \text{if }\lambda_L=\mu_L, \\
   \vspace{0.2cm}     \frac{R_Kh_K|\Delta_K|^{-\frac{r+1}{2}}}{w_K (r+1) \xi(r+1)} \Z_{\mathbb{P}^{t-1}}\left( \mu \lambda_L+|\mathbf{a}|-(r+1)a_r\right)   & \text{if }\lambda_L>\mu_L,\\
    \begin{array}{l}
            \frac{R_Kh_K|\Delta_K|^{-\frac{t-N_X+(r+1)}{2}}\xi_K(\lambda \mu_L+N_X-(r+1))}{w_K ((r+1)a_r+t-|\mathbf{a}|) \xi_K(\lambda \mu_L)\xi_K(t)}  \\
      \vspace{0.1cm}   \hspace{1cm} \times \big( \Z_{\PP^{N_X-1}}(\lambda \mu_L+N_X-(r+1))-\Z_{\PP^{N_X-2}}(\lambda \mu_L+N_X-(r+1)) \big)
    \end{array}  \vspace{0.2cm}    
          & \text{if }\lambda_L<\mu_L,
     \end{array}\right.$$
     where~$N_X:=\# \{i\in \{1,\ldots,r\}:a_i=a_r\}$.
 \end{thm}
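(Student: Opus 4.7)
The plan is to establish the meromorphic continuation of the height zeta function
$$\Z(s):=\sum_{P\in U(K)}H_L(P)^{-s},\quad \Re(s)\gg 0,$$
past its first pole and then apply the Tauberian theorem (Theorem~\ref{thm:tauberianThm}) with exponents $a(L),b(L)$ given in~(\ref{eq a(L) and b(L)}). The leading constant $C_{L,K}$ is then read off from the appropriate Laurent coefficient at $s=a(L)$.

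Following the strategy used by Maruyama for Theorem~\ref{thm Maruyama}, I would first exploit the projective bundle structure $\pi:X\to \PP^{t-1}$ and the multiplicativity~(\ref{eq H_L of P}) of the standard height to decompose $\Z(s)$ as an iterated sum over $Q\in \PP^{t-1}(K)$ and $P\in U_Q(K)$, where $U_Q:=\pi^{-1}(Q)\cap U=\PP(\mathscr{W}_Q)\setminus\PP(\mathscr{Y}_Q)$. Applying Maruyama's integral representation (Proposition~\ref{prop:LemmaOfIntegraloverPic}) fiber by fiber, and using the orthogonal splitting $\overline{\mathscr{W}_Q}=\overline{\CO_K}\oplus \overline{\mathscr{Y}_Q^\vee}$ together with Lemma~\ref{lem:SectionsOfDirectSum} to obtain the identity
$$\varphi(D\otimes\overline{\mathscr{W}_Q})-\varphi(D\otimes\overline{\mathscr{Y}_Q^\vee})=\varphi(D)\bigl(1+\varphi(D\otimes\overline{\mathscr{Y}_Q^\vee})\bigr),$$
one gets
$$w_K\xi_K(\lambda s)\Z(s)=\int_{\Pic(K)}\N(D)^{-\lambda s}\varphi(D)\Bigl[\Z_{\PP^{t-1}}(\mu s)+T(s,D)\Bigr]\,\textup{d}D,$$
where $T(s,D):=\sum_{Q\in \PP^{t-1}(K)}H_{\PP^{t-1}}(Q)^{-\mu s}\varphi(D\otimes\overline{\mathscr{Y}_Q^\vee})$. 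The first summand on the right-hand side is exactly the Maruyama formula for $\Z_{\PP^{t-1}}(\mu s)$, so it contributes the candidate pole at $s=t/\mu$ coming from $\Z_{\PP^{t-1}}$, which by the inequality $a_r>0$ lies strictly below $a(L)$ and is therefore irrelevant for the leading term.

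The core analytic step is then the treatment of $T(s,D)$ and of its integral against $\N(D)^{-\lambda s}\varphi(D)$. Splitting $\Pic(K)=\Pic(K)_-\sqcup\Pic(K)_+$ as in the proof of Proposition~\ref{prop:LemmaOfIntegraloverPic}, the integral over $\Pic(K)_-$ is holomorphic in $s$ on a sufficiently large half-plane by Proposition~\ref{prop:boundForNumberSections}; on $\Pic(K)_+$, the change of variable $D\mapsto\overline{\omega_{\CO_K}}\otimes D^\vee$ together with the Poisson--Riemann--Roch formula (Theorem~\ref{thm:RiemannRoch}) converts $\varphi(D\otimes \overline{\mathscr{Y}_Q^\vee})$ into $\varphi(D\otimes \overline{\mathscr{Y}_Q}\otimes\overline{\omega_{\CO_K}}^{-1})$ up to elementary norm factors. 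Expanding
$$\varphi(D\otimes\overline{\mathscr{Y}_Q^\vee})=\prod_{i=1}^{r}\Bigl(1+\varphi\bigl(D\otimes \overline{\CO_{\PP^{t-1}}(a_r-a_{i-1})_Q}\bigr)\Bigr)-1$$
produces $2^r$ subsum contributions indexed by subsets $S\subseteq\{1,\ldots,r\}$; after reapplying Maruyama's formula in reverse on $\PP^{t-1}$ to the sum over $Q$, each subsum becomes an explicit product of $\xi_K$-values and of $\Z_{\PP^{t-1}}$ evaluated at a shifted argument $\mu s+c_S$, where $c_S=\sum_{i\in S}(a_r-a_{i-1})$.

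From this explicit expression one identifies the candidate poles of $\Z(s)$ in the half-plane of interest: a pole at $s=\lambda_L$ coming from a $1/(\lambda s-(r+1))$ term produced in the $\Pic(K)_+$ integral (with $\xi_K(\lambda\lambda_L)=\xi_K(r+1)$ entering the residue via division by $\xi_K(\lambda s)$), and a pole at $s=\mu_L$ coming from the unique subset contribution in which $c_S+\mu s$ hits $t$, namely the subset indexing the $N_X$ trivial-twist summands with $a_{i-1}=a_r$. Comparing the two candidate locations gives the three cases: $\lambda_L>\mu_L$ gives a simple pole at $s=\lambda_L$ with constant involving $\Z_{\PP^{t-1}}(\mu\lambda_L+|\mathbf{a}|-(r+1)a_r)$; $\lambda_L<\mu_L$ gives a simple pole at $s=\mu_L$ with the factor $\Z_{\PP^{N_X-1}}-\Z_{\PP^{N_X-2}}$ arising from the inclusion--exclusion between $\overline{\mathscr{W}_Q}$ and $\overline{\mathscr{Y}_Q^\vee}$ restricted to the $N_X$-dimensional trivial subspace; and $\lambda_L=\mu_L$ produces a double pole as both contributions collide. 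The main obstacle I foresee is the combinatorial bookkeeping in the $2^r$-term expansion and the careful verification that, in each regime, only the indicated configurations survive in the leading Laurent coefficient; in particular the emergence of the precise arguments $\lambda\mu_L+N_X-(r+1)$ and $\mu\lambda_L+|\mathbf{a}|-(r+1)a_r$ requires tracking the interplay between the Riemann--Roch shifts and the multiplicities coming from the $a_i$'s. Once this is done, Theorem~\ref{thm:tauberianThm} applied to $\Z(s)$ with $a=a(L)$, $b=b(L)$ yields the stated asymptotic, while the residue of $\xi_K$ at $s=1$ (obtained from~(\ref{eq extension of xi_K})) supplies the arithmetic constants $R_K,h_K,|\Delta_K|$.
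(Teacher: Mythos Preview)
Your overall architecture matches the paper's: decompose $\Z_{U,L}(s)$ along the fibration using~(\ref{eq H_L of P}), invoke the integral representation~(\ref{eq:zetaFunctionMultipliedbyFactors}) fiberwise, use the identity $\varphi(D\otimes\overline{\mathscr{W}_Q})-\varphi(D\otimes\overline{\mathscr{Y}_Q^\vee})=\varphi(D)(1+\varphi(D\otimes\overline{\mathscr{Y}_Q^\vee}))$, split $\Pic(K)=\Pic(K)_-\sqcup\Pic(K)_+$, and apply Riemann--Roch on $\Pic(K)_+$. The paper organises this slightly differently (it applies Proposition~\ref{prop:LemmaOfIntegraloverPic}(2) to $\overline{\mathscr{W}_Q}$ and $\overline{\mathscr{Y}_Q^\vee}$ separately, producing five explicit pieces $F_1,\ldots,F_5$), but the substance is the same.

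There is, however, a genuine gap in your handling of the $\Pic(K)_-$ piece and of the $2^r$ expansion. First, the claim that each subsum indexed by $S\subseteq\{1,\ldots,r\}$ becomes ``an explicit product of $\xi_K$-values and of $\Z_{\PP^{t-1}}$ at a shifted argument'' is not correct: integrals of the form $\int_{\Pic(K)}\N(D)^{-\lambda s}\varphi(D)\prod_{i\in S}\varphi(D\otimes\overline{\CO(a_r-a_{i-1})_Q})\,\textup{d}D$ do not factor in this way, because the factors are coupled through $D$. Second, and more seriously, the integral over $\Pic(K)_-$ is \emph{not} holomorphic past $s=\mu_L$ in the case $\lambda_L<\mu_L$; it contributes to the leading pole. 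The paper resolves both issues simultaneously via two dedicated asymptotic lemmas (Lemmas~\ref{lem: bound for positive split bundles} and~\ref{lem: bound for negative split bundles}): rather than expanding into $2^r$ terms, one splits $\overline{\mathscr{Y}_Q^\vee}=\overline{\CO_K}^{\oplus(N_X-1)}\oplus\overline{E_Q}$ (note: $N_X-1$ trivial summands, not $N_X$) and shows that for $D\in\Pic(K)_-$,
\[
\int_{\Pic(K)_-}\N(D)^{-s}\varphi_m(D)\varphi(D\otimes\overline{E_Q})\,\textup{d}D = |\Delta_K|^{-n/2}H_{\PP^{t-1}}(Q)^{|\mathbf{b}|}\int_{\Pic(K)_-}\N(D)^{n-s}\varphi_m(D)\,\textup{d}D + O\bigl(H_{\PP^{t-1}}(Q)^{|\mathbf{b}|-b_1}\bigr),
\]
with an error uniform on compact sets in $s$. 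Summing over $Q$, the main term yields the shifted $\Z_{\PP^{t-1}}$ (hence the pole at $\mu_L$), while the error is absorbed into a $Q$-sum converging for $\Re(s)$ strictly smaller than $\mu_L$. A companion exponential-decay bound (Lemma~\ref{lem: bound for negative split bundles}) handles the dual terms with negative twists arising from $\Pic(K)_+$. Without these asymptotic extractions you cannot isolate the pole at $\mu_L$ with the correct residue, nor verify that no spurious poles survive above it; your $2^r$ scheme would have to rediscover precisely this content.
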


In the proof of Theorem~\ref{thm:generalVersionTheorem} below, we study the analytic properties of the height zeta function
$$\Z_{U,L}(s):=\sum_{P\in U(K)}H_L(P)^{-s}$$
associated  to the big line bundle class~$L$ and the good open subset~$U:=U_d(a_1,\ldots,a_r)$, and we make use of the following three lemmas. 

\begin{lem}\label{lem: convergencia integral varphi(D)^m}
For an integer~$m\geq 0$, define
\begin{equation}\label{eq varphi_m}
\varphi_m(D):=\left\{\begin{array}{ll}
\varphi(D)     & \text{if }m=0, \\
\varphi(D)\varphi(D^{\oplus m})     & \text{if }m\geq 1.
\end{array}\right.
\end{equation}
Then, the following properties hold:
\begin{enumerate}
    \item For $D\in \Pic(K)_{-}$ we have $\varphi(D^{\oplus m})=O(1)$, with an implicit constant depending only on~$m$ and on the base field~$K$.
    \item The integral
    \[
    \int_{\Pic(K)_{-}}\N(D)^{-s}\varphi_m(D)\,\textup{d}D
    \]
    converges absolutely and uniformly for~$s$ in compact subsets of~$\CC$.
\end{enumerate}
\end{lem}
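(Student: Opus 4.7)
My strategy is to first establish the uniform bound in part~(1), then plug it into the integrand of part~(2) and reduce to a one-variable integral over $\RR$ that can be controlled by the super-exponential tail provided by Proposition~\ref{prop:cota_Geer_Schoof}.

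For part~(1), I would apply Corollary~\ref{cor:varphiOfOplus} to $\overline{L}_1 = \ldots = \overline{L}_m = \overline{\mathcal{O}(D)}$, which yields the identity
\[
\varphi(D^{\oplus m}) = (1+\varphi(D))^m - 1.
\]
Since $D \in \Pic(K)_-$ means precisely $\widehat{\deg}(D) \leq \tfrac{1}{2}\log|\Delta_K|$, Proposition~\ref{prop:boundForHLB} with $\theta = \tfrac{1}{2}\log|\Delta_K|$ gives $\varphi(D) \leq e \sqrt{|\Delta_K|}$. Substituting into the displayed identity produces a bound depending only on $m$ and $K$, which is exactly the claim.

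For part~(2), the case $m = 0$ and the case $m \geq 1$ are handled in parallel: by part~(1), $\varphi_m(D) \ll_{m,K} \varphi(D)$ for $D \in \Pic(K)_-$, so it suffices to prove convergence of $\int_{\Pic(K)_-} |N(D)^{-s}|\,\varphi(D)\,\textup{d}D$. Applying Proposition~\ref{prop:cota_Geer_Schoof} with $C = \tfrac{1}{2}\log|\Delta_K|$ provides constants $\beta > 0$ depending on $K$ such that $\varphi(D) \leq \beta\,e^{-\pi n_K e^{-2\deg(D)/n_K}}$ on $\Pic(K)_-$. Then Lemma~\ref{lem: integral over Pic(K)} converts the integral over $\Pic(K)_-$ into one over $(-\infty, \tfrac{1}{2}\log|\Delta_K|]$, bounding the original by
\[
h_K R_K\, \beta \int_{-\infty}^{\frac{1}{2}\log|\Delta_K|} e^{-\Re(s) x}\, e^{-\pi n_K e^{-2x/n_K}}\,\textup{d}x.
\]
For $s$ ranging over a compact set $K_0 \subset \CC$ with $|\Re(s)| \leq M$, the factor $e^{-\Re(s) x}$ is dominated by $e^{M|x|}$, and the doubly exponential decay of $e^{-\pi n_K e^{-2x/n_K}}$ as $x \to -\infty$ absorbs any such exponential factor. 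The upper end of integration is finite, so the integrand is bounded there, and uniform convergence on $K_0$ follows.

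The argument is essentially routine once one recognizes that the only potential obstacle, namely the behaviour near $\deg(D) \to -\infty$, is controlled by the Geer--Schoof tail estimate of Proposition~\ref{prop:cota_Geer_Schoof}; the main point requiring care is therefore simply to invoke that estimate in the correct range and match it with the elementary bound from Proposition~\ref{prop:boundForHLB} for part~(1).
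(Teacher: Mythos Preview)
Your proof is correct and follows essentially the same route as the paper. The only minor differences are that for part~(1) you invoke Corollary~\ref{cor:varphiOfOplus} together with the elementary bound of Proposition~\ref{prop:boundForHLB}, whereas the paper cites Proposition~\ref{prop:boundForNumberSections} directly; and for part~(2) you unroll the argument (Geer--Schoof decay plus Lemma~\ref{lem: integral over Pic(K)}) rather than citing the already-proved Proposition~\ref{prop:LemmaOfIntegraloverPic}(1), but the underlying estimate is identical.
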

\begin{proof}
Item~$(1)$ follows from Proposition~\ref{prop:boundForNumberSections} since~$\varphi(D^{\oplus n})=\varphi(D\otimes \overline{\CO_K^{\oplus n}})$ is bounded above, for~$D\in \Pic(K)_{-}$, by a constant depending only on~$K$ and~$n$, hence we can use this fact for~$n=1$ and~$n=m$. Item~$(2)$ follows  directly from Proposition~\ref{prop:LemmaOfIntegraloverPic}(1) and the fact that~$\varphi(D^{\oplus m})$ is bounded. This proves the lemma.
\end{proof}

\begin{lem}\label{lem: bound for positive split bundles}
Given positive integers~$0<b_1\leq b_2\leq \cdots \leq b_n$ and~$Q\in \PP^{t-1}$, define
$$\overline{E_Q}:=\overline{ \CO_{\PP^{t-1}}(b_1)_Q}\oplus \cdots\oplus  \overline{(\CO_{\PP^{t-1}}(b_n))_Q},$$
and~$|\mathbf{b}|:=\sum_{i=1}^n b_i$. Then, for every integer~$m\geq 0$, every compact subset~$\mathcal{K}\subset \CC$ and every~$s\in \mathcal{K}$, we have
\begin{equation*}
    \begin{split}
& \int_{\Pic(K)_{-}}\N(D)^{-s}\varphi_m(D)\varphi\left(D\otimes \overline{E_Q}\right) \,\textup{d}D\\
&\hspace{2cm} = |\Delta_K|^{-\frac{n}{2}}H_{\PP^{t-1}}(Q)^{|\mathbf{b}|}\int_{\Pic(K)_{-}}\N(D)^{n-s}\varphi_m(D) \,\textup{d}D+O(H_{\PP^{t-1}}(Q)^{|\mathbf{b}| -b_1}),        
    \end{split}
\end{equation*}
with an implicit constant depending only on~$m,\mathcal{K},b_1,\ldots,b_n$ and on the base field~$K$, where~$\varphi_m(D)$ is defined in~\eqref{eq varphi_m}.
\end{lem}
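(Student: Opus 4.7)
First, I would apply Theorem~\ref{thm:RiemannRoch} (Poisson--Riemann--Roch) to the rank-$n$ split Hermitian vector bundle $D\otimes \overline{E_Q}$. Combining Lemma~\ref{lem:SectionsOfDirectSum}(2), which gives the multiplicative identity $\varphi(D\otimes \overline{E_Q})+1=\prod_{i=1}^n(\varphi(L_i)+1)$ for $L_i:=D\otimes \overline{\CO_{\PP^{t-1}}(b_i)_Q}$, together with Riemann--Roch applied individually to each Hermitian line bundle $L_i$, yields the key identity
\[
\varphi(D\otimes \overline{E_Q})+1 = |\Delta_K|^{-n/2}\N(D)^n H_{\PP^{t-1}}(Q)^{|\mathbf{b}|}\prod_{i=1}^n(\varphi(M_i)+1),
\]
where $M_i:=D^\vee\otimes \overline{\CO_{\PP^{t-1}}(-b_i)_Q}\otimes \overline{\omega_{\CO_K}}$.

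Next, I would expand $\prod_{i=1}^n(\varphi(M_i)+1)=1+\sum_{\emptyset\neq S\subseteq\{1,\ldots,n\}}\prod_{i\in S}\varphi(M_i)$ via Corollary~\ref{cor:varphiOfOplus}, substitute back into the integral and distribute. This decomposes the result into three pieces: (a) the main term $|\Delta_K|^{-n/2}H_{\PP^{t-1}}(Q)^{|\mathbf{b}|}\int_{\Pic(K)_{-}}\N(D)^{n-s}\varphi_m(D)\,\textup{d}D$; (b) the contribution $-\int_{\Pic(K)_{-}}\N(D)^{-s}\varphi_m(D)\,\textup{d}D$ arising from the $-1$ in the Riemann--Roch identity, which is $O(1)\subseteq O(H_{\PP^{t-1}}(Q)^{|\mathbf{b}|-b_1})$ by Lemma~\ref{lem: convergencia integral varphi(D)^m}; and (c) a sum, indexed by nonempty $S$, of error integrals of the form $|\Delta_K|^{-n/2}H_{\PP^{t-1}}(Q)^{|\mathbf{b}|}\int_{\Pic(K)_{-}}\N(D)^{n-s}\varphi_m(D)\prod_{i\in S}\varphi(M_i)\,\textup{d}D$. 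It thus suffices to prove $\int_{\Pic(K)_{-}}\N(D)^{n-s}\varphi_m(D)\prod_{i\in S}\varphi(M_i)\,\textup{d}D = O(H_{\PP^{t-1}}(Q)^{-b_1})$ uniformly for $s\in\mathcal{K}$ and each nonempty $S$.

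To estimate these remaining integrals, I would split $\Pic(K)_{-}$ at the threshold $\widehat{\deg}(D)=-b_1\log H_{\PP^{t-1}}(Q)$. On the region where $\widehat{\deg}(D)\geq -b_1\log H_{\PP^{t-1}}(Q)$, the inequality $\widehat{\deg}(M_i)\leq \log|\Delta_K|$ holds uniformly in $i$, so Proposition~\ref{prop:cota_Geer_Schoof} yields
\[
\prod_{i\in S}\varphi(M_i)\leq \beta\exp\!\bigl(-\pi n_K|\Delta_K|^{-2/n_K}H_{\PP^{t-1}}(Q)^{2b_1/n_K}\N(D)^{2/n_K}\bigr),
\]
with $\beta$ depending only on $K$ and $|S|$; converting the integral over $\Pic(K)_{-}$ to a one-variable integral via Lemma~\ref{lem: integral over Pic(K)} and successively changing variables by $u=\N(D)^{2/n_K}$ and $v=\Lambda u$ with $\Lambda\propto H_{\PP^{t-1}}(Q)^{2b_1/n_K}$ reduces the estimate to an incomplete gamma integral uniformly bounded for $s\in\mathcal{K}$. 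On the complementary region where $\widehat{\deg}(D)<-b_1\log H_{\PP^{t-1}}(Q)$, Proposition~\ref{prop:cota_Geer_Schoof} applied to $\varphi(D)$ forces $\varphi_m(D)$ to be super-exponentially small, while $\prod_{i\in S}\varphi(M_i)$ is controlled using the Riemann--Roch identity together with Proposition~\ref{prop:boundForHLB}; the resulting contribution decays faster than any negative power of $H_{\PP^{t-1}}(Q)$.

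The main obstacle I anticipate is obtaining the bound $O(H_{\PP^{t-1}}(Q)^{-b_1})$ with implicit constant depending \emph{only} on $m,\mathcal{K},b_1,\ldots,b_n$ and $K$: the choice of splitting threshold at $-b_1\log H_{\PP^{t-1}}(Q)$ is critical because it balances the exponential decay factor $H_{\PP^{t-1}}(Q)^{2b_1/n_K}$ against the range of integration on each subregion, so that the $H_{\PP^{t-1}}(Q)^{-b_1}$ factor emerges cleanly from the change of variables $v=\Lambda u$ while keeping the incomplete gamma integral bounded uniformly over $s\in\mathcal{K}$. Once this bound is established for every nonempty $S$, multiplying by $|\Delta_K|^{-n/2}H_{\PP^{t-1}}(Q)^{|\mathbf{b}|}$ delivers the claimed error $O(H_{\PP^{t-1}}(Q)^{|\mathbf{b}|-b_1})$.
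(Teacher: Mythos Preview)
Your global strategy is sound and is a legitimate reordering of the paper's argument: apply Poisson--Riemann--Roch on all of $\Pic(K)_-$ first and then split the domain to bound the error sums, whereas the paper splits $\Pic(K)_-$ first (into regions $\mathrm{I}_Q$ and $\mathrm{II}_Q$ according to whether $\N(D)H_{\PP^{t-1}}(Q)^{b_1}\lessgtr\sqrt{|\Delta_K|}$) and applies Riemann--Roch only on $\mathrm{II}_Q$. Your below-threshold analysis is fine.

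There is, however, a genuine gap in the above-threshold step. To invoke Lemma~\ref{lem: integral over Pic(K)} you must first replace $\varphi_m(D)$ by a function of $\deg(D)$ alone; doing so with the bound $\varphi_m(D)=O(1)$ and then making your substitutions $u=\N(D)^{2/n_K}$, $v=\Lambda u$ with $\Lambda\propto H_{\PP^{t-1}}(Q)^{2b_1/n_K}$ produces the prefactor
\[
\Lambda^{-n_K(n-\Re(s))/2}\;\asymp\;H_{\PP^{t-1}}(Q)^{-b_1(n-\Re(s))},
\]
not $H_{\PP^{t-1}}(Q)^{-b_1}$. The incomplete gamma integral is indeed uniformly bounded, but the power of $H_{\PP^{t-1}}(Q)$ that ``emerges cleanly'' is $-b_1(n-\Re(s))$, which is weaker than the required $-b_1$ as soon as $\Re(s)>n-1$; since $\mathcal{K}$ is an arbitrary compact subset of~$\CC$, this is not good enough.

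The remedy is to keep $\varphi_m(D)$ inside the integral rather than reducing to a one-variable integral. On the above-threshold region Proposition~\ref{prop:cota_Geer_Schoof} gives $\prod_{i\in S}\varphi(M_i)\le C\,e^{-c(\N(D)H_{\PP^{t-1}}(Q)^{b_1})^{2/n_K}}$; now use the elementary bound $x\,e^{-cx^{2/n_K}}\le C'$ with $x=\N(D)H_{\PP^{t-1}}(Q)^{b_1}$ to peel off exactly one factor of $\N(D)^{-1}H_{\PP^{t-1}}(Q)^{-b_1}$, leaving
\[
H_{\PP^{t-1}}(Q)^{-b_1}\int_{\Pic(K)_-}\bigl|\N(D)^{\,n-1-s}\bigr|\varphi_m(D)\,\textup{d}D,
\]
which is $O\!\left(H_{\PP^{t-1}}(Q)^{-b_1}\right)$ uniformly for $s\in\mathcal{K}$ by Lemma~\ref{lem: convergencia integral varphi(D)^m}(2). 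It is precisely the super-exponential decay of $\varphi_m(D)$ encoded in that lemma that makes the bound hold for \emph{all} $s$, and discarding it via $\varphi_m(D)=O(1)$ is what breaks your version of the estimate. This is exactly the device the paper uses on its region~$\mathrm{II}_Q$.
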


\begin{proof}
Let us define
\begin{equation*}
\begin{split}
    \mathrm{I}_Q&:=\{D\in \Pic(K)_{-} : N(D)H_{\PP^{t-1}}(Q)^{b_1}\leq \sqrt{|\Delta_K|}\},\\
    \mathrm{II}_Q&:=\{D\in \Pic(K)_{-} : \sqrt{|\Delta_K|}\leq N(D)H_{\PP^{t-1}}(Q)^{b_1}\},
    \end{split}
\end{equation*}
and for~$i\in \{1,\ldots, n\}$ put~$\overline{E_Q^{(i)}}:=\overline{\CO_{\PP^{t-1}}(b_i)_Q}$. Fix a compact subset~$\mathcal{K}\subset \CC$ and assume~$s\in \mathcal{K}$. 
In what follows, all terms of the form~$O(\ldots)$ are meant to have implicit constants depending only on~$m,\mathcal{K},b_1,\ldots,b_n$ and the base field~$K$.

First, from Lemma~\ref{lem: convergencia integral varphi(D)^m}(1) with~$m=1$ it follows that there exists a constant~$C_1\geq 1$, depending only on the base field~$K$, such that~$\varphi(D)\leq C_1$ for all~$D\in \Pic(K)_{-}$. Equivalently, $\varphi\left(D\otimes \overline{E_Q^{(1)}}\right)\leq C_1$ for all~$D\in  \mathrm{I}_Q$. Also, by Proposition~\ref{prop:boundForHLB} we have
$$\varphi\left(D\otimes \overline{E_Q^{(i)}}\right)\leq e^{1+\frac{1}{2}\log(|\Delta_K|)+b_i\log(H_{\PP^{t-1}}(Q))}=e|\Delta_K|^{\frac{1}{2}}H_{\PP^{t-1}}(Q)^{b_i},$$
for all~$D\in  \Pic(K)_{-}$ and~$i\in \{2,\ldots,n\}$. Together with Corollary~\ref{cor:varphiOfOplus}, these estimates imply
$$\varphi\left(D\otimes \overline{E_Q} \right) \leq 2^nC_1(e|\Delta_K|^{\frac{1}{2}})^{n-1}H_{\PP^{t-1}}(Q)^{|\mathbf{b}|-b_1},$$
for all~$D\in \mathrm{I}_Q$. Hence, by Lemma~\ref{lem: convergencia integral varphi(D)^m}(2) we conclude
\begin{equation}\label{eq first integral over IQ}
  \int_{ \mathrm{I}_Q}\N(D)^{-s}\varphi_m(D)\varphi\left(D\otimes \overline{E_Q} \right) \,\textup{d}D=O(H_{\PP^{t-1}}(Q)^{|\mathbf{b}|-b_1}).  
\end{equation}

Now, by Proposition~\ref{prop:cota_Geer_Schoof} there exists a constant~$\beta\geq 1$, depending only on~$K$, such that~$\varphi(D)\leq \beta e^{-\pi n_Ke^{-\frac{2}{n_K}\widehat{\deg}(D)}}$ for all~$D\in \Pic(K)_{-}$, with~$n_K=[K:\QQ]$ as usual. Together with Lemma~\ref{lem: convergencia integral varphi(D)^m}(1), this implies 
\begin{equation*}
    \begin{split}
       \left| \int_{ \mathrm{I}_Q}\N(D)^{n-s}\varphi_m(D) \,\textup{d}D\right| & \leq \beta' \int_{-\infty}^{\frac{1}{2}\log(|\Delta_K|)-b_1\log(H_{\PP^{t-1}}(Q))} e^{x(n- \Re(s))-\pi n_K e^{-\frac{2}{n_K}x}}\textup{d}x,
    \end{split}
\end{equation*}
for some constant~$\beta'>0$ depending only on~$K$ and~$m$. Put~$C_2:=\frac{|\mathbf{b}|}{b_1}$, and let~$T<0$ such that
$$x(n- \Re(s))-\pi n_K e^{-\frac{2}{n_K}x}\leq C_2x \quad \text{for all }x\in ]-\infty,T] \text{ and }s\in \mathcal{K}.$$
If~$\frac{1}{2}\log(|\Delta_K|)-b_1\log(H_{\PP^{t-1}}(Q))\leq T$, then
\begin{equation*}
    \begin{split}
       \left| \int_{ \mathrm{I}_Q}\N(D)^{n-s}\varphi_m(D) \,\textup{d}D\right| & \leq \beta' \int_{-\infty}^{\frac{1}{2}\log(|\Delta_K|)-b_1\log(H_{\PP^{t-1}}(Q))} e^{C_2x}\textup{d}x\\
       &=\frac{\beta' (|\Delta_K|)^{\frac{C_2}{2}}}{C_2 H_{\PP^{t-1}}(Q)^{C_2b_1}}.
    \end{split}
\end{equation*}
Hence, on the one hand, assuming~$\frac{1}{2}\log(|\Delta_K|)-b_1\log(H_{\PP^{t-1}}(Q))\leq T$ we get
\begin{equation}\label{eq second integral over IQ}
   H_{\PP^{t-1}}(Q)^{|\mathbf{b}|} \left| \int_{\mathrm{I}_Q}\N(D)^{n-s}\varphi_m(D) \,\textup{d}D\right|
=O(1).  
\end{equation}
On the other hand, if~$\frac{1}{2}\log(|\Delta_K|)-b_1\log(H_{\PP^{t-1}}(Q))\geq T$ then~$H_{\PP^{t-1}}(Q)$ is bounded above by a constant that depends only on~$K$ and~$T$, hence in that case~\eqref{eq second integral over IQ} 
also holds thanks to Lemma~\ref{lem: convergencia integral varphi(D)^m}(2). 

Now we look at integrals over~$ \mathrm{II}_Q$. First, we use Theorem~\ref{thm:RiemannRoch} to get
\begin{equation*}
    \begin{split}
 \varphi\left(D\otimes \overline{E_Q} \right)&=(\varphi\left(D^{\vee}\otimes \overline{E_Q}^{\vee}\otimes \overline{\omega_{\CO_K}} \right)+1)\N(D)^nH_{\PP^{t-1}}(Q)^{|\mathbf{b}|}|\Delta_K|^{-\frac{n}{2}}-1  \\
 &=\N(D)^nH_{\PP^{t-1}}(Q)^{|\mathbf{b}|}|\Delta_K|^{-\frac{n}{2}}-1+\N(D)^nH_{\PP^{t-1}}(Q)^{|\mathbf{b}|}|\Delta_K|^{-\frac{n}{2}}\varphi\left(D^{\vee}\otimes \overline{E_Q}^{\vee}\otimes \overline{\omega_{\CO_K}} \right).
    \end{split}
\end{equation*}
This implies
\begin{equation}\label{eq Riemann Roch over II}
\begin{split}
&\int_{\mathrm{II}_Q}\N(D)^{-s}\varphi_m(D)\varphi\left(D\otimes \overline{E_Q}\right) \,\textup{d}D\\
&\hspace{0.5cm} = |\Delta_K|^{-\frac{n}{2}} H_{\PP^{t-1}}(Q)^{|\mathbf{b}|}\int_{ \mathrm{II}_Q}\N(D)^{n-s}\varphi_m(D)\varphi\left(D^{\vee}\otimes \overline{E_Q}^{\vee} \otimes \overline{\omega_{\CO_K}} \right) \,\textup{d}D\\
&\hspace{1cm} + |\Delta_K|^{-\frac{n}{2}}H_{\PP^{t-1}}(Q)^{|\mathbf{b}|}\int_{\mathrm{II}_Q}\N(D)^{n-s}\varphi_m(D) \,\textup{d}D  \\
&\hspace{1.5cm} -\int_{ \mathrm{II}_Q}\N(D)^{-s}\varphi_m(D)\,\textup{d}D.
\end{split}
\end{equation}
From Lemma~\ref{lem: convergencia integral varphi(D)^m}(2) we know that
\begin{equation}\label{eq third integral over II}
    \int_{ \mathrm{II}_Q}\N(D)^{-s}\varphi_m(D)\,\textup{d}D =O(1).
\end{equation}
Now, note that for~$D\in  \mathrm{II}_Q$ we have~$D^{\vee}\otimes (\overline{E_Q^{(i)}})^\vee \otimes \overline{\omega_{\CO_K}}\in \Pic(K)_{-}$ for all~$i\in\{1,\ldots,n\}$, hence by Proposition~\ref{prop:cota_Geer_Schoof} we get
$$\varphi(D^{\vee}\otimes (\overline{E_Q^{(i)}})^\vee \otimes \overline{\omega_{\CO_K}})\leq \beta e^{-\pi n_K(|\Delta_K|^{-1}\N(D)H_{\PP^{t-1}}(Q)^{b_i})^{\frac{2}{n_K}}}\leq \beta e^{-C_3(\N(D)H_{\PP^{t-1}}(Q)^{b_1})^{\frac{2}{n_K}}},$$
where~$\beta\geq 1$ is the same constant as before and~$C_3:=\pi n_K |\Delta_K|^{-\frac{2}{n_K}}$. Combined with Corollary~\ref{cor:varphiOfOplus}, we conclude
$$\varphi(D^{\vee}\otimes \overline{E_Q}^\vee \otimes \overline{\omega_{\CO_K}})\leq 2^n\beta^n e^{-nC_3(\N(D)H_{\PP^{t-1}}(Q)^{b_1})^{\frac{2}{n_K}}},$$
for all~$D\in  \mathrm{II}_Q$. Letting~$C_4>0$ be such that~$xe^{-nC_3x^{\frac{2}{n_K}}}\leq C_4$ for all $x\geq 0$, we get
$$\N(D)H_{\PP^{t-1}}(Q)^{b_1}e^{-nC_3(\N(D)H_{\PP^{t-1}}(Q)^{b_1})^{\frac{2}{n_K}}}\leq C_4,$$
thus
\begin{equation}\label{eq first integral over II}
  \begin{split}
      &H_{\PP^{t-1}}(Q)^{|\mathbf{b}|}\int_{ \mathrm{II}_Q}\N(D)^{n-s}\varphi_m(D)\varphi\left(D^{\vee}\otimes E^{\vee}_Q\otimes \overline{\omega_{\CO_K}} \right) \,\textup{d}D\\
       & \hspace{1cm} \leq C_4 2^n\beta^n H_{\PP^{t-1}}(Q)^{|\mathbf{b}|-b_1}\int_{ \mathrm{II}_Q}\N(D)^{n-1-s}\varphi_m(D) \,\textup{d}D =O(H_{\PP^{t-1}}(Q)^{|\mathbf{b}|-b_1}),
  \end{split}  
\end{equation}
by Lemma~\ref{lem: convergencia integral varphi(D)^m}(2). The desired result then follows from~\eqref{eq first integral over IQ}, \eqref{eq second integral over IQ}, \eqref{eq Riemann Roch over II}, \eqref{eq third integral over II} and~\eqref{eq first integral over II}. This completes the proof of the lemma.
\end{proof}

\begin{lem}\label{lem: bound for negative split bundles}
Given negative integers~$0>b_1\geq b_2\geq \cdots \geq b_n$ and~$Q\in \PP^{t-1}$, define
$$\overline{E_Q}:=\overline{ \CO_{\PP^{t-1}}(b_1)_Q}\oplus \cdots\oplus  \overline{(\CO_{\PP^{t-1}}(b_n))_Q}.$$
Then, there exist~$\beta,\gamma>0$, depending only on~$K$ and~$n$, such that for every~$D\in \Pic(K)_{-}$ we have
$$\varphi(D\otimes \overline{E_Q})\leq \beta e^{-\gamma e^{-\frac{2}{n_K}\widehat{\deg}(D)}}e^{-\gamma H_{\PP^{t-1}}(Q)^{\frac{2}{n_K}}},$$
where~$n_K=[K:\QQ]$ as usual.
\end{lem}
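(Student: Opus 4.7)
The plan is to reduce the bound for the split bundle $D\otimes \overline{E_Q}$ to the bound for each rank-one summand $D\otimes \overline{\CO_{\PP^{t-1}}(b_i)_Q}$, and then apply Proposition~\ref{prop:cota_Geer_Schoof} to each summand. By Corollary~\ref{cor:varphiOfOplus} we have
\[
\varphi(D\otimes \overline{E_Q})=\sum_{\emptyset\neq I\subseteq\{1,\dots,n\}}\prod_{i\in I}\varphi(D\otimes\overline{\CO_{\PP^{t-1}}(b_i)_Q}),
\]
so a uniform bound on each factor of the form $\beta e^{-\gamma(\cdots)}$ immediately yields the desired bound for the whole sum (with a worse constant but the same exponential decay).

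To bound a single factor, first I would compute
\[
\widehat{\deg}(D\otimes\overline{\CO_{\PP^{t-1}}(b_i)_Q})=\widehat{\deg}(D)+b_i\log H_{\PP^{t-1}}(Q).
\]
Since $H_{\PP^{t-1}}(Q)\geq 1$ (a direct consequence of the product formula applied to any non-zero coordinate of $Q$) and $b_i<0$, and since $D\in\Pic(K)_{-}$ means $\widehat{\deg}(D)\leq\frac{1}{2}\log|\Delta_K|$, this degree is bounded above by $\frac{1}{2}\log|\Delta_K|$. Proposition~\ref{prop:cota_Geer_Schoof} then gives a constant $\beta_i>0$ (depending only on $K$) such that
\[
\varphi(D\otimes\overline{\CO_{\PP^{t-1}}(b_i)_Q})\leq \beta_i\exp\!\left(-\pi n_K\, e^{-\frac{2}{n_K}\widehat{\deg}(D)}\,H_{\PP^{t-1}}(Q)^{-\frac{2b_i}{n_K}}\right).
\]

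The main (and only nontrivial) step is to split this single exponential into a product of two exponentials, one decaying in $\widehat{\deg}(D)$ and the other in $H_{\PP^{t-1}}(Q)^{2/n_K}$. Set $x:=e^{-\frac{2}{n_K}\widehat{\deg}(D)}$ and $y:=H_{\PP^{t-1}}(Q)^{-2b_i/n_K}$. Because $D\in\Pic(K)_{-}$ we have $x\geq x_0:=|\Delta_K|^{-1/n_K}>0$, and because $-b_i\geq 1$ and $H_{\PP^{t-1}}(Q)\geq 1$ we have $y\geq H_{\PP^{t-1}}(Q)^{2/n_K}\geq 1$. Hence $xy\geq x$ and $xy\geq x_0 H_{\PP^{t-1}}(Q)^{2/n_K}$, which gives
\[
xy\geq \tfrac{1}{2}\bigl(x+x_0\,H_{\PP^{t-1}}(Q)^{2/n_K}\bigr).
\]
Substituting this back yields
\[
\varphi(D\otimes\overline{\CO_{\PP^{t-1}}(b_i)_Q})\leq \beta_i\, e^{-\gamma_i e^{-\frac{2}{n_K}\widehat{\deg}(D)}}\, e^{-\gamma_i H_{\PP^{t-1}}(Q)^{2/n_K}}
\]
with $\gamma_i=\tfrac{\pi n_K}{2}\min(1,x_0)$, a constant depending only on $K$.

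Finally, taking $\beta:=(2^n-1)\max_i\max(\beta_i,\beta_i^n)$ and $\gamma:=\min_i\gamma_i$ (both depending only on $K$ and $n$), and using that each summand $\prod_{i\in I}\varphi(\cdots)$ in the expansion above is at most $\max(\beta_i^{|I|}) e^{-\gamma e^{-\frac{2}{n_K}\widehat{\deg}(D)}}e^{-\gamma H_{\PP^{t-1}}(Q)^{2/n_K}}$ (we keep only one copy of the exponential decay, bounding the remaining factors of the exponential by $1$), the result follows. The only real work is the elementary inequality $xy\geq\tfrac12(x+x_0 y)$ used to decouple the two decay rates; everything else is bookkeeping and direct application of the tools already established.
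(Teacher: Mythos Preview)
Your proof is correct and follows essentially the same route as the paper: reduce to rank-one summands via Corollary~\ref{cor:varphiOfOplus}, apply Proposition~\ref{prop:cota_Geer_Schoof} to each summand after checking the degree bound, and then split the product $e^{-\frac{2}{n_K}\widehat{\deg}(D)}\cdot H_{\PP^{t-1}}(Q)^{-2b_i/n_K}$ into a sum using an elementary inequality of the form $xy\geq \rho(x+y)$ valid because both factors are bounded below by positive constants depending only on~$K$. The paper phrases the reduction slightly differently (it simply says ``it is enough to prove the case $n=1$'' rather than writing out the subset expansion), and it first replaces $H_{\PP^{t-1}}(Q)^{-2b_1/n_K}$ by $H_{\PP^{t-1}}(Q)^{2/n_K}$ using $b_1\leq -1$ before applying the splitting inequality, but these are cosmetic differences.
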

\begin{proof}
By Corollary~\ref{cor:varphiOfOplus} it is enough to prove the lemma in the case~$n=1$.
Since
$$\widehat{\deg}(D\otimes \overline{\CO_{\PP^{t-1}}(b_1)_Q})=\widehat{\deg}(D)+b_1\log (H_{\PP^{t-1}}(Q))\leq  \frac{1}{2}\log(|\Delta_K|),$$
we can use Proposition~\ref{prop:boundForNumberSections}. This shows that there exists a constant~$\beta>0$, depending only on~$K$, such that
$$\varphi(D\otimes \overline{\CO_{\PP^{t-1}}(b_1)_Q})\leq \beta e^{-\pi n_Ke^{-\frac{2}{n_K}(\widehat{\deg}(D)+b_1\log (H_{\PP^{t-1}}(Q)))}}=\beta e^{-\pi n_Ke^{-\frac{2}{n_K}\widehat{\deg}(D)}H_{\PP^{t-1}}(Q)^{\frac{2}{n_K}}},$$
where in the last inequality we used that~$b_1\leq -1$. Putting~$A:=e^{-\frac{2}{n_K}\widehat{\deg}(D)}$ and $B:=H_{\PP^{t-1}}(Q)^{\frac{2}{n_K}}$, we see that~$A\geq |\Delta_K|^{-\frac{1}{n_K}}$ and~$B\geq 1$, hence there exists a constant~$\rho>0$, depending only on~$K$, such that~$AB\geq \rho(A+B)$. This implies
$$\varphi(D\otimes \overline{\CO_{\PP^{t-1}}(b_1)_Q})\leq \beta e^{-\gamma e^{-\frac{2}{n_K}\widehat{\deg}(D)}}e^{-\gamma H_{\PP^{t-1}}(Q)^{\frac{2}{n_K}}},$$
with~$\gamma=\pi n_K \rho$. This proves the lemma.
\end{proof}

\begin{notation}
    In the proof of Theorem~\ref{thm:generalVersionTheorem} below, we put~$\varphi(D^{\oplus m}):=0$ when~$m=0$.
\end{notation}

\begin{proof}[Proof of Theorem~\ref{thm:generalVersionTheorem}]
Given $P\in X(K)$ we put $Q:=\pi(P)$. Then, we have~$P\in \PP(\mathscr{W}_Q)(K)$  with~$\mathscr{W}$ defined in~\eqref{eq def of W}. Moreover, if $P\in F(K)$ then~$P\in \PP(\mathscr{Y}^\vee_Q)(K)$ and $H_{\PP(\mathscr{W}_Q)}(P)=H_{\PP(\mathscr{Y}^\vee_Q)}(P)$.  Indeed, this follows from the fact that $\mathscr{Y}^\vee_Q\subseteq \mathscr{W}_Q$, which implies that $\overline{\CO_{\mathscr{Y}^\vee_Q}(-1)_P} =\overline{\CO_{\mathscr{W}_Q}(-1)_P}$. Then, taking duals and norms leads to the equality of heights. From this and~\eqref{eq H_L of P}, we get
	\begin{equation*}
		\begin{split}
			\Z_{U,L}(s)
			&=\sum_{P\in U(K)}\left( H_{\PP(\mathscr{W}_{\pi(P)})}(P)^{\lambda }H_{\PP ^{t-1}}\left( \pi(P)\right)^{\mu}\right) ^{-s}\\
			&=\sum_{Q\in \PP^{t-1}(K)}H_{\PP ^{t-1}}\left( Q\right)^{- \mu s} \left(\sum_{P\in \PP(\mathscr{W}_Q)(K)}H_{\PP(\mathscr{W}_Q)}(P)^{-\lambda s} -
   \sum_{P\in \PP(\mathscr{Y}^\vee_Q)(K)}H_{\PP(\mathscr{W}_Q)}(P)^{-\lambda s}\right)\\
			&=\sum_{Q\in \PP^{t-1}(K)}H_{\PP ^{t-1}}\left( Q\right)^{-\mu s} \left( \Z_{\PP(\mathscr{W}_Q)}(\lambda s)-\Z_{\PP(\mathscr{Y}^\vee_Q)}(\lambda s)\right).
		\end{split}
	\end{equation*}
	Now, fixing $Q\in \PP^{t-1}(K)$ and using 
 Proposition \ref{prop:LemmaOfIntegraloverPic}(2), we have 
	\begin{equation*}
		\begin{split}
	       &w_K\xi_K(\lambda s)\Z_{\PP(\mathscr{W}_Q)}(\lambda s)\\
			&= R_Kh_K|\Delta_K|^{-\frac{\lambda s}{2}} \left( \frac{\N(\overline{\mathscr{W}_Q})}{\lambda s-(r+1)}-\frac{1}{\lambda s}\right)+\int_{\Pic(K)_{-}} \N(D)^{-\lambda s}\varphi(D\otimes \overline{\mathscr{W}_Q})\,\textup{d}D\\
            &\hspace{2cm}+ \N(\overline{\mathscr{W}_Q})|\Delta_K|^{\frac{r+1}{2}-\lambda s}\int_{\Pic(K)_{-}}\N(D)^{\lambda s-(r+1)}\varphi(D\otimes \overline{\mathscr{W}_Q}^\vee)\,\textup{d}D.
		\end{split}
	\end{equation*}
Similarly, since $\mathscr{Y}^\vee$ has rank $r$, we have
	\begin{equation*}
		\begin{split}
			&w_K\xi_K(\lambda s)\Z_{\PP(\mathscr{Y}^\vee_Q)}(\lambda s)\\
			&=R_Kh_K|\Delta_K|^{-\frac{\lambda s}{2}} \left(\frac{\N(\overline{\mathscr{Y}^\vee_Q})}{\lambda s-r}-\frac{1}{\lambda s}\right)+\int_{\Pic(K)_{-}} \N(D)^{-\lambda s}\varphi(D\otimes \overline{\mathscr{Y}^\vee_Q})\,\textup{d}D\\
            &\hspace{2cm}+ \N(\overline{\mathscr{Y}^\vee_Q})|\Delta_K|^{\frac{r}{2}-\lambda s}\int_{\Pic(K)_{-}}\N(D)^{\lambda s-r}\varphi(D\otimes \overline{\mathscr{Y}_Q})\,\textup{d}D.
		\end{split}
	\end{equation*}
 	Hence, we can write
    $$w_K\xi_K(\lambda s)\Z_{U,L}(s)=\sum_{j=1}^5F_{j}(s),$$
    where
	\begin{equation*}
		\begin{split}
			&F_1(s):=\frac{R_Kh_K|\Delta_K|^{-\frac{\lambda s}{2}}}{\lambda  s-(r+1)}\sum_{Q\in \PP^{t-1}(K)}H_{\PP ^{t-1}}\left( Q\right)^{-\mu s}\N(\overline{\mathscr{W}_Q}) ,\\
			&F_2(s):=-\frac{R_Kh_K|\Delta_K|^{-\frac{\lambda s}{2}}}{\lambda s-r}\sum_{Q\in \PP^{t-1}(K)}H_{\PP ^{t-1}}\left( Q\right)^{-\mu s}\N(\overline{\mathscr{Y}^\vee_Q}),\\
			&F_3(s):=\sum_{Q\in \PP^{t-1}(K)}H_{\PP ^{t-1}}\left( Q\right)^{-\mu s} \int_{\Pic(K)_{-}} \N(D)^{-\lambda s}\left( \varphi(D\otimes \overline{\mathscr{W}_Q})-\varphi(D\otimes \overline{\mathscr{Y}^\vee_Q})\right) \,\textup{d}D,\\
			&F_4(s):=|\Delta_K|^{\frac{r+1}{2}-\lambda s}\sum_{Q\in \PP^{t-1}(K)}H_{\PP ^{t-1}}\left( Q\right)^{-\mu s}\N(\overline{\mathscr{W}_Q})\int_{\Pic(K)_{-}}\N(D)^{\lambda s-(r+1)}\varphi(D\otimes \overline{\mathscr{W}_Q}^\vee)\,\textup{d}D,\\
			&F_5(s):=-|\Delta_K|^{\frac{r}{2}-\lambda s}\sum_{Q\in \PP^{t-1}(K)}H_{\PP ^{t-1}}\left( Q\right)^{-\mu s}\N(\overline{\mathscr{Y}^\vee_Q})\int_{\Pic(K)_{-}}\N(D)^{\lambda s-r}\varphi(D\otimes \overline{\mathscr{Y}_Q})\,\textup{d}D.\\
		\end{split}
	\end{equation*}
We are going to analyze each of these functions separately.  First, we compute
 	\begin{equation}\label{eq N(overline{mathscr{W}_Q})}
		\begin{split}
  \N(\overline{\mathscr{Y}^\vee_Q})=\N(\overline{\mathscr{W}_Q})&=\N(\overline{(\CO_{\PP^{t-1}})_{Q}} \oplus \overline{\CO_{\PP^{t-1}}(a_r)_{Q}}\oplus  \cdots\oplus \overline{\CO_{\PP^{t-1}}(a_r-a_{r-1})_{Q}})\\
			&=H_{\PP^{t-1}}(Q)^{(r+1)a_r-|\mathbf{a}|}.
		\end{split}
	\end{equation}
This implies
\begin{equation*}\label{F_1}
	F_1(s)=\frac{R_Kh_K|\Delta_K|^{-\frac{\lambda s}{2}}}{\lambda s-(r+1)} \Z_{\mathbb{P}^{t-1}}\left( \mu s+|\mathbf{a}|-(r+1)a_r\right),
\end{equation*}
hence by Theorem~\ref{thm Maruyama} the function~$F_1(s)$ is holomorphic in~$\Re(s)>\frac{1+(r+1)a_r-|\mathbf{a}|}{\mu},s\neq \lambda_L,s\neq \mu_L$. Similarly,
 \begin{equation*}\label{F_2}
        F_2(s)=-\frac{R_Kh_K|\Delta_K|^{-\frac{\lambda s}{2}}}{\lambda s-r} \Z_{\mathbb{P}^{t-1}}\left( \mu s+|\mathbf{a}|-(r+1)a_r\right),
 \end{equation*}
hence~$F_2(s)$ is holomorphic in~$\Re(s)>\frac{1+(r+1)a_r-|\mathbf{a}|}{\mu },s\neq \frac{r}{\lambda},s\neq \mu_L$.

We now focus on the function~$F_3(s)$. First, note that $\overline{\mathscr{W}_Q}=\overline{\mathscr{Y}_Q^\vee} \oplus \overline{(\CO_{\PP^{t-1}})_Q}$ and hence, using Lemma \ref{lem:SectionsOfDirectSum}(2), we can compute
	\begin{equation*}
		\begin{split}
			\varphi(D\otimes \overline{\mathscr{W}_Q})
   &= \varphi(D\otimes (\overline{\mathscr{Y}^\vee_Q}\oplus \overline{(\CO_{\PP^{t-1}})_Q}))\\
			&= \varphi(D\otimes \overline{\mathscr{Y}^\vee_Q})+\varphi(D\otimes \overline{(\CO_{\PP^{t-1}})_Q} )+\varphi(D\otimes \overline{\mathscr{Y}^\vee_Q})\varphi(D\otimes \overline{(\CO_{\PP^{t-1}})_Q})\\
			&=\varphi(D\otimes \overline{\mathscr{Y}^\vee_Q})+\varphi(D)+\varphi(D\otimes \overline{\mathscr{Y}^\vee_Q})\varphi(D).
		\end{split}
	\end{equation*}
	We deduce that $F_3(s)=G_1(s)+G_2(s),$ where
	\begin{equation*}
		\begin{split}
			G_1(s):=&\sum_{Q\in \PP^{t-1}(K)}H_{\PP ^{t-1}}\left( Q\right)^{-\mu s} \int_{\Pic(K)_{-}} \N(D)^{-\lambda s}\varphi(D)\,\textup{d}D\\
			=&\Z_{\mathbb{P}^{t-1}}\left( \mu s \right)\int_{\Pic(K)_{-}} \N(D)^{-\lambda s}\varphi(D)\,\textup{d}D,
		\end{split}
	\end{equation*}
	and 
	$$G_2(s):=\sum_{Q\in \PP^{t-1}(K)}H_{\PP ^{t-1}}\left( Q\right)^{-\mu s} \int_{\Pic(K)_{-}} \N(D)^{-\lambda s}\varphi(D\otimes \overline{\mathscr{Y}^\vee_Q})\varphi(D)\,\textup{d}D.$$
By Lemma~\ref{lem: convergencia integral varphi(D)^m}(1) with~$m=0$, together with Theorem~\ref{thm Maruyama}, the function~$G_1(s)$ is holomorphic in~$\Re(s)>\frac{t}{\mu}$. In order to analyze the function~$G_2(s)$, put $a_0:=0$ an recall that~$N_X=\#\{i\in \{1,\ldots,r\}:a_i=a_r\}$, so we can write
$$\overline{\mathscr{Y}_Q^\vee} =\overline{(\CO_{\PP^{t-1}})_Q}^{\oplus (N_X-1)}\oplus \overline{E_Q}$$
where~$\overline{E_Q}$ is the direct sum of the line bundles~$\overline{\CO_{\PP^{t-1}}(a_r-a_i))_Q}$ over~$i\in\{0,\ldots,r-1\}$ with~$a_i<a_r$. Using~Lemma \ref{lem:SectionsOfDirectSum}(2) again, we have
\begin{equation*}
    \begin{split}
     \varphi(D\otimes \overline{\mathscr{Y}_Q^\vee})
     &=\varphi(D^{\oplus (N_X-1)})+\varphi(D\otimes \overline{E_Q})+\varphi(D^{\oplus (N_X-1)})\varphi(D\otimes \overline{E_Q}).  
    \end{split}
\end{equation*}
We now use Lemma~\ref{lem: bound for positive split bundles} with~$n=r-(N_X-1)$, $b_1=a_r-a_{r-N_X},b_2=a_r-a_{r-N_X-1},\ldots,b_n=a_r$, and~$m=0$, and also with~$m=N_X-1$ if~$N_X>1$, in order to write
\begin{equation*}
    \begin{split}
   G_2(s)
   &= \widetilde{G_2}(s) + |\Delta_K|^{\frac{N_X-(r+1)}{2}}\sum_{Q\in \PP^{t-1}(K)}H_{\PP ^{t-1}}\left( Q\right)^{-\mu s+(r+1)a_r-|\mathbf{a}|}\\
   & \hspace{5cm} \times \int_{\Pic(K)_{-}}\N(D)^{r-(N_X-1)-\lambda s}\varphi(D)(1+\varphi(D^{\oplus (N_X-1)})) \,\textup{d}D\\
   &= \widetilde{G_2}(s)+|\Delta_K|^{\frac{N_X-(r+1)}{2}}\Z_{\PP^{t-1}}(\mu s+|\mathbf{a}|-(r+1)a_r)\\
   & \hspace{5cm} \times \int_{\Pic(K)_{-}}\N(D)^{r-(N_X-1)-\lambda s}\varphi(D)(1+\varphi(D^{\oplus (N_X-1)})) \,\textup{d}D,
    \end{split}
\end{equation*}
with~$\widetilde{G_2}(s)$ an analytic function on~$\Re(s)>\frac{t+(r+1)a_r-|\mathbf{a}|-b_1}{\mu}=\frac{t+ra_r+a_{r-N_X}-|\mathbf{a}|}{\mu}$. Hence,
\begin{equation*}
    \begin{split}
   F_3(s)
   &= G_1(s)+ \widetilde{G_2}(s)+|\Delta_K|^{\frac{N_X-(r+1)}{2}}\Z_{\PP^{t-1}}(\mu s+|\mathbf{a}|-(r+1)a_r)\\
   & \hspace{5cm} \times \int_{\Pic(K)_{-}}\N(D)^{r-(N_X-1)-\lambda s}\varphi(D)(1+\varphi(D^{\oplus (N_X-1)})) \,\textup{d}D,
    \end{split}
\end{equation*}
with~$G_1(s)+ \widetilde{G_2}(s)$ analytic in~$\Re(s)>
\frac{t+ra_r+a_{r-N_X}-|\mathbf{a}|}{\mu}$.

In order to analyze the function~$F_4(s)$, we start by using~\eqref{eq N(overline{mathscr{W}_Q})}  to write
\begin{equation*}
		\begin{split}
			F_4(s)&=|\Delta_K|^{\frac{r+1}{2}-\lambda s }\sum_{Q\in \PP^{t-1}(K)}H_{\PP ^{t-1}}\left( Q\right)^{-\mu s+(r+1)a_r-|\mathbf{a}|}\\
			&\hspace{4cm}\times\int_{\Pic(K)_{-}}\N(D)^{\lambda s-(r+1)}\varphi(D\otimes \overline{\mathscr{W}_Q^\vee})\,\textup{d}D.
		\end{split}
	\end{equation*} 
%
Now we write
$$\overline{\mathscr{W}_Q^\vee} =\overline{(\CO_{\PP^{t-1}})_Q}^{\oplus N_X}\oplus \overline{E_Q^{\vee}}$$
with~$\overline{E_Q^{\vee}}$ the sum of the line bundles~$\overline{\CO_{\PP^{t-1}}(a_i-a_r)_Q}$ over~$i\in\{0,\ldots,r-1\}$ with~$a_i<a_r$. By Lemma~\ref{lem:SectionsOfDirectSum}(2) we have
\begin{equation*}
    \begin{split}
     \varphi(D\otimes \overline{\mathscr{W}_Q^\vee})
     &=\varphi(D^{\oplus N_X})+\varphi(D\otimes \overline{E_Q^\vee})+\varphi(D^{\oplus N_X})\varphi(D\otimes \overline{E_Q^\vee}).  
    \end{split}
\end{equation*}
Hence, we can write~$F_4(s)=G_3(s)+G_4(s)$ where
\begin{equation*}
    \begin{split}
    G_3(s)&:=|\Delta_K|^{\frac{r+1}{2}-\lambda s}\Z_{\mathbb{P}^{t-1}}\left(\mu s+|\mathbf{a}|-(r+1)a_r  \right)  \int_{\Pic(K)_{-}}\N(D)^{\lambda s-(r+1)}\varphi(D^{\oplus N_X})\,\textup{d}D ,\\
    G_4(s) &:=|\Delta_K|^{\frac{r+1}{2}-\lambda s}\sum_{Q\in \PP^{t-1}(K)}H_{\PP ^{t-1}}\left( Q\right)^{-\mu s+(r+1)a_r-|\mathbf{a}|}\\
			&\hspace{3.5cm}\times\int_{\Pic(K)_{-}}\N(D)^{\lambda s-(r+1)}\left( \varphi(D\otimes \overline{E_Q^\vee})+\varphi(D^{\oplus N_X})\varphi(D\otimes \overline{E_Q^\vee}) \right)\,\textup{d}D.
    \end{split}
\end{equation*}
On the one hand, by Theorem~\ref{thm Maruyama} and Proposition~\ref{prop:LemmaOfIntegraloverPic}(1), the series~$G_3(s)$ extends to a holomorphic function in~$\Re(s)>\frac{1+(r+1)a_r-|\mathbf{a}|}{\mu},s\neq \mu_L$. On the other hand, 
using Lemmas~\ref{lem: bound for negative split bundles} and~\ref{lem: convergencia integral varphi(D)^m}(1) to bound~$\varphi(D\otimes \overline{E_Q^\vee})$ and~$\varphi(D^{\oplus N_X})$, respectively, we see that the series~$G_4(s)$ converges absolutely and uniformly for~$s$ in compact subsets of~$\CC$, hence~$G_4(s)$ extends to an entire function. We conclude that~$F_4(s)$ is holomorphic in~$\Re(s)>\frac{1+(r+1)a_r-|\mathbf{a}|}{\mu},s\neq \mu_L$.

Finally, the analysis of the function~$F_5(s)$ is analogous to that of~$F_4(s)$, and we get that~$F_5(s)=G_5(s)+G_6(s)$ with
\begin{equation*}
   G_5(s):= -|\Delta_K|^{\frac{r}{2}-\lambda s}\Z_{\mathbb{P}^{t-1}}\left(\mu s+|\mathbf{a}|- (r+1)a_r  \right) 
     \int_{\Pic(K)_{-}}\N(D)^{\lambda s-r}\varphi(D^{\oplus (N_X-1)})\,\textup{d}D,
     \end{equation*}
and~$G_6(s)$ entire. In particular, $F_5(s)$ is holomorphic in~$\Re(s)>\frac{1+(r+1)a_r-|\mathbf{a}|}{\mu},s\neq \mu_L$.

Putting everything together, and recalling that~$t\geq 2$ and~$a_{r}\geq 1$, we obtain the following:
 \begin{enumerate}
			\item If $\lambda_L=\mu_L$,
   then~$\Z_{U,L}(s)$ is holomorphic in
$$\Re(s)>\max\left\{ \frac{1+(r+1)a_r-|\mathbf{a}|}{\mu},\frac{r}{\lambda},\frac{t+ra_r+a_{r-N_X}-|\mathbf{a}|}{\mu}\right\},s\neq \lambda_L$$
and it has a pole of order two at~$s=\lambda_L$ (coming from~$F_1$) with
$$\lim_{s\to \lambda_L}(s-\lambda_L)^2\Z_{U,L}(s)=\frac{R_Kh_K|\Delta_K|^{-\frac{r+1}{2}}}{w_K\lambda \mu \xi(r+1)} \operatorname{Res}_{s=t}\Z_{\PP^{t-1}}(s)=\frac{R_K^2h_K^2|\Delta_K|^{-\frac{(d+2)}{2}}}{w_K^2 \lambda \mu \xi_K(r+1)\xi_K(t) }.$$
			\item If $\lambda_L>\mu_L$,
			then $\Z_{U,L}(s)$ has holomorphic continuation to
   $$\Re(s)>\max\left\{ \frac{r}{\lambda},\mu_L \right\},s\neq \lambda_L$$
   and it has a simple pole at~$s=\lambda_L$ (coming from~$F_1$) with 
   $$\lim_{s\to \lambda_L}(s-\lambda_L)\Z_{U,L}(s)=\frac{R_Kh_K|\Delta_K|^{-\frac{r+1}{2}}}{w_K \lambda \xi(r+1)} \Z_{\mathbb{P}^{t-1}}\left( \mu \lambda_L+|\mathbf{a}|-(r+1)a_r\right).$$

			\item If~$\lambda_L<\mu_L$ 
			then $\Z_{U,L}(s)$ is holomorphic in
   $$\Re(s)> \max\left\{ \lambda_L,\frac{1+(r+1)a_r-|\mathbf{a}|}{\mu},\frac{t+ra_r+a_{r-N_X}-|\mathbf{a}|}{\mu}\right\} ,s\neq \mu_L$$
   and it has a possible singularity at~$s=\mu_L$ (coming from~$F_1,F_2,F_3,G_3$ and~$G_5$) with
\begin{equation*}
    \begin{split}
\lim_{s\to \mu_L}(s-\mu_L)\Z_{U,L}(s)&=\frac{R_Kh_K|\Delta_K|^{-\frac{t}{2}}}{w_K^2 \mu \xi_K(\lambda \mu_L)\xi_K(t)}\left( \frac{R_Kh_K |\Delta_K|^{-\frac{\lambda \mu_L}{2}}}{\lambda\mu_L -(r+1)} -\frac{R_Kh_K |\Delta_K|^{-\frac{\lambda \mu_L}{2}}}{\lambda\mu_L -r} \right.    \\
& \hspace{0.5cm} +|\Delta_K|^{\frac{N_X-(r+1)}{2}} \int_{\Pic(K)_{-}}\N(D)^{r-(N_X-1)-\lambda \mu_L}\varphi(D)(1+\varphi(D^{\oplus (N_X-1)}) )\,\textup{d}D \\
& \hspace{1cm} +|\Delta_K|^{\frac{r+1}{2}-\lambda \mu_L}\int_{\Pic(K)_{-}}\N(D)^{\lambda \mu_L-(r+1)}\varphi(D^{\oplus N_X})\,\textup{d}D\\
& \hspace{1.5cm}\left. -|\Delta_K|^{\frac{r}{2}-\lambda \mu_L}\int_{\Pic(K)_{-}}\N(D)^{\lambda \mu_L-r}\varphi(D^{\oplus (N_X-1)})\,\textup{d}D \right).
    \end{split}
\end{equation*}
Furthermore, in this case we can write (using~Lemma~\ref{lem:SectionsOfDirectSum}(2))
$$\varphi(D)(1+\varphi(D^{\oplus (N_X-1)}))=\varphi(D^{\oplus N_X})-\varphi(D^{\oplus (N_X-1)}),$$
and use Proposition~\ref{prop:LemmaOfIntegraloverPic}(2), together with formula~\eqref{eq extension of xi_K} when~$N_X=1$ or~$2$, to get
\begin{equation*}
    \begin{split}
\lim_{s\to \mu_L}(s-\mu_L)\Z_{U,L}(s)&=\frac{R_Kh_K|\Delta_K|^{-\frac{t}{2}}}{w_K^2 \mu \xi_K(\lambda \mu_L)\xi_K(t)} w_K |\Delta_K|^{\frac{N_X-(r+1)}{2}}\xi_K(\lambda \mu_L+N_X-(r+1))\\
& \hspace{1cm}\times \big( \Z_{\PP^{N_X-1}}(\lambda \mu_L+N_X-(r+1))-\Z_{\PP^{N_X-2}}(\lambda \mu_L+N_X-(r+1)) \big). 
    \end{split}
\end{equation*}
Since this value is positive, we conclude that~$\Z_{U,L}(s)$ has a simple pole at~$s=\mu_L$ in this case. 
 		\end{enumerate}
Then, the asymptotic formula for~$N(U,H|_L,B)$ follows from these properties, together with~\eqref{eq a(L) and b(L)} and Theorem~\ref{thm:tauberianThm}. This completes the proof of the theorem.
\end{proof}

\begin{remark}
The different cases that appear in Theorem~\ref{thm:generalVersionTheorem} give a subdivision of the \emph{big cone} of $X$, i.e., the interior of~$\Lambda_{\textup{eff}}$ (see Figure~\ref{figure} for an illustration). The line bundles~$L$ contained in the ray passing through the anticanonical class have height zeta functions with a double pole at $s=\lambda_L=\mu_L$, while line bundles outside this ray have~$\lambda_L\neq \mu_L$ and have height zeta functions with a simple pole at $s=\max\{\lambda_L,\mu_L\}$.
\end{remark}

\begin{figure}[h]
    \centering
    \begin{tikzpicture}[x=0.75pt,y=0.75pt,yscale=-1,xscale=1]

\draw  (173.17,247.44) -- (455.5,247.44) (202.56,36.71) -- (202.56,268) (448.5,242.44) -- (455.5,247.44) -- (448.5,252.44) (197.56,43.71) -- (202.56,36.71) -- (207.56,43.71)  ;
\draw    (202.56,247.44) -- (382.01,43.83) ;
\draw [shift={(383.33,42.33)}, rotate = 131.39] [color={rgb, 255:red, 0; green, 0; blue, 0 }  ][line width=0.75]    (10.93,-3.29) .. controls (6.95,-1.4) and (3.31,-0.3) .. (0,0) .. controls (3.31,0.3) and (6.95,1.4) .. (10.93,3.29)   ;
\fill  [color={rgb, 255:red, 208; green, 2; blue, 27 }  ,fill opacity=0.17 ] (383.33,42.33) -- (202.56,247.44) --  (451.32,247.44) -- (451.32,42.33) -- cycle ;
\fill [color={rgb, 255:red, 74; green, 144; blue, 226 }  ,fill opacity=0.28 ] (202.56,247.44) -- (382.9,42.56) -- (201.88,43.1) -- cycle ;
\draw    (296,141) -- (202.56,247.44) ;
\draw [shift={(296,141)}, rotate = 131.28] [color={rgb, 255:red, 0; green, 0; blue, 0 }  ][fill={rgb, 255:red, 0; green, 0; blue, 0 }  ][line width=0.75]      (0, 0) circle [x radius= 2.34, y radius= 2.34]   ;

\draw (177.41,29.97) node [anchor=north west][inner sep=0.75pt]    {$\mu $};
\draw (469.49,244.98) node [anchor=north west][inner sep=0.75pt]    {$\lambda $};
\draw (288.95,157.62) node [anchor=north west][inner sep=0.75pt]  [color={rgb, 255:red, 0; green, 0; blue, 0 }  ,opacity=1 ]  {$-K_{X}$};
\draw (348,16.07) node [anchor=north west][inner sep=0.75pt]  [font=\scriptsize]  {$\mu =\left(\frac{( r+1) a_{r} -|\mathbf{a} |+t}{r+1}\right) \lambda $};
\draw (327.33,116.73) node [anchor=north west][inner sep=0.75pt]  [font=\scriptsize,color={rgb, 255:red, 208; green, 2; blue, 27 }  ,opacity=1 ]  {$\mu <\left(\frac{( r+1) a_{r} -|\mathbf{a} |+t}{r+1}\right) \lambda $};
\draw (212,58.07) node [anchor=north west][inner sep=0.75pt]  [font=\scriptsize,color={rgb, 255:red, 49; green, 0; blue, 255 }  ,opacity=1 ]  {$\mu  >\left(\frac{( r+1) a_{r} -|\mathbf{a} |+t}{r+1}\right) \lambda $};
\end{tikzpicture}
    \caption{Subdivision of the big cone of $X$. }
    \label{figure}
\end{figure}
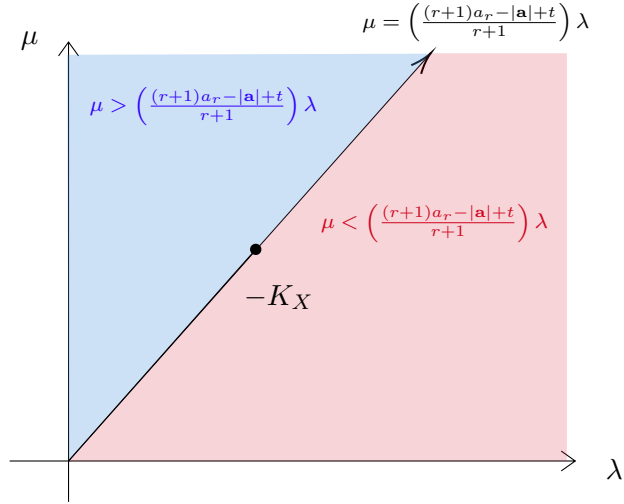

In Theorem~\ref{thm:generalVersionTheorem} we have omitted the case when~$a_r=0$. This is because in the case~$a_r=0$ we have~$X_d(a_1,\ldots,a_r)\simeq \PP^{t-1}\times \PP^r$, and there is no need to remove a proper subvariety of~$X$ to obtain the ``correct'' growth of the number of rational points of bounded height. Note that, in this case, we have
$$\lambda_L=\frac{r+1}{\lambda}, \quad \mu_L=\frac{t}{\mu}.$$

\begin{thm}\label{thm:degenerate_case_general_theorem}
     Let $X\simeq \PP^{t-1}\times \PP^r$ be a Hirzebruch--Kleinschmidt variety over the number field~$K$ with~$a_r=0$, and let~$L=\lambda h+\mu f$ be a big line bundle class in~$\Pic(X)$. Then, we have
        $$N(X,H_L,B)\sim C_{L,K}B^{a(L)}\log(B)^{b(L)} \quad \text{as }B\to \infty,$$
     with~$C_{L,K}$ given by
     $$\left\{
     \begin{array}{ll}
      \frac{R_K^2h_K^2|\Delta_K|^{-\frac{(d+2)}{2}}}{w_K^2 (r+1) \mu \xi_K(r+1)\xi_K(t)}    &  \text{if }\lambda_L=\mu_L, \\
       \frac{R_Kh_K|\Delta_K|^{-\frac{r+1}{2}}}{w_K (r+1) \xi(r+1)} \Z_{\mathbb{P}^{t-1}}\left( \mu \lambda_L\right)   & \text{if }\lambda_L>\mu_L,\\
          \frac{R_Kh_K|\Delta_K|^{-\frac{t}{2}}}{w_K t \xi_K(t)}\Z_{\PP^{r}}(\lambda \mu_L) 
          & \text{if }\lambda_L<\mu_L.
     \end{array}\right.$$
\end{thm}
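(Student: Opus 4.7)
The plan is to exploit the product structure of $X$ in the degenerate case. When $a_r = 0$, the hypothesis $0 \leq a_1 \leq \cdots \leq a_r$ forces all $a_i = 0$, so the vector bundle $\mathscr{W}$ defined in~\eqref{eq def of W} is trivial, $\mathscr{W} \simeq \mathcal{O}_{\PP^{t-1}}^{\oplus (r+1)}$, and $X \simeq \PP^{t-1} \times \PP^r$. Under this isomorphism, writing a point as $P = (Q,R)$ with $Q \in \PP^{t-1}(K)$ and $R \in \PP^r(K)$, the Hermitian structure on $\PP(\mathscr{W}_Q)$ reduces (independently of $Q$) to the standard Hermitian structure on $\PP^r$, giving $H_{\PP(\mathscr{W}_Q)}(P) = H_{\PP^r}(R)$. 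Therefore, from~\eqref{eq H_L of P}, the height factorizes as $H_L(P) = H_{\PP^r}(R)^\lambda H_{\PP^{t-1}}(Q)^\mu$, and consequently the height zeta function splits as
$$Z_{X,L}(s) = \sum_{(Q,R)\in X(K)} H_L(Q,R)^{-s} = Z_{\PP^{t-1}}(\mu s) \cdot Z_{\PP^r}(\lambda s).$$

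Next I would apply Theorem~\ref{thm Maruyama} to each factor. The function $Z_{\PP^{t-1}}(\mu s)$ has meromorphic continuation to $\CC$, is holomorphic for $\Re(s) > 1/\mu$ with $s \neq \mu_L = t/\mu$, and has a simple pole at $s = \mu_L$ with residue $\mu^{-1} R_K h_K / (w_K |\Delta_K|^{t/2} \xi_K(t))$. Likewise, $Z_{\PP^r}(\lambda s)$ is meromorphic with a simple pole at $s = \lambda_L = (r+1)/\lambda$ with residue $\lambda^{-1} R_K h_K / (w_K |\Delta_K|^{(r+1)/2} \xi_K(r+1))$.

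I would then split into the three cases according to the order of $\lambda_L$ and $\mu_L$. If $\lambda_L = \mu_L$, the product $Z_{X,L}(s)$ has a double pole at $s = a(L) = \lambda_L$, and the leading coefficient equals the product of the two residues; combining this with the Tauberian theorem (Theorem~\ref{thm:tauberianThm}) and using $\lambda \lambda_L = r+1$ yields the stated constant with $b(L) = 2$. If $\lambda_L > \mu_L$, the pole at $s = \lambda_L$ coming from $Z_{\PP^r}(\lambda s)$ dominates while $Z_{\PP^{t-1}}(\mu s)$ is holomorphic there, evaluating to $Z_{\PP^{t-1}}(\mu \lambda_L)$; the Tauberian theorem then gives the claimed constant. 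The case $\lambda_L < \mu_L$ is symmetric, with roles exchanged.

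The main obstacle is essentially nonexistent here: unlike the $a_r > 0$ case treated in Theorem~\ref{thm:generalVersionTheorem}, there is no need to excise a subvariety, since the clean product factorization of the height already produces a globally convergent zeta function with the expected pole structure. The entire argument reduces to a careful bookkeeping of the two simple poles from Maruyama's theorem and a direct application of the Tauberian theorem; the only verification required is matching the residues with the explicit constants in the statement.
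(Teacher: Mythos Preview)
Your proposal is correct and follows essentially the same approach as the paper: factorize the height zeta function as $\Z_{X,L}(s)=\Z_{\PP^{r}}(\lambda s)\Z_{\PP^{t-1}}(\mu s)$ via the product structure, invoke Maruyama's theorem for the pole structure of each factor, and apply the Tauberian theorem in each of the three cases. If anything, your write-up is more detailed than the paper's, which leaves the residue bookkeeping to the reader.
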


One can adapt the proof of Theorem~\ref{thm:generalVersionTheorem} to give a proof of Theorem~\ref{thm:degenerate_case_general_theorem}. Instead of doing that, we present a simpler argument based only on the analytic properties of the height zeta functions of the projective spaces~$\PP^{t-1}$ and~$\PP^r$.

\begin{proof}
    We have
    $$\Z_{X,L}(s):=\sum_{P\in X(K)}H_L(P)^{-s}=\Z_{\PP^{r}}(\lambda s)\Z_{\PP^{t-1}}(\mu s).$$
    Then, using Theorem~\ref{thm Maruyama} we see that~$\Z_{X,L}(s)$ is holomorphic in
    $$\Re(s)>\max\left\{ \frac{1}{\lambda},\frac{1}{\mu}\right\},s\neq \lambda_L,s\neq \mu_L,$$
    and it has a double pole at~$s=\lambda_L$ if~$\lambda_L=\mu_L$, and a simple pole at~$s=\max\{\lambda_L,\mu_L\}$ if~$\lambda_L\neq \mu_L$. Then, the result follows by using Theorem~\ref{thm Maruyama} to compute~$\lim_{s\to a(L)}(s-a(L))^{b(L)}Z_{X,L}(s)$, and using Theorem~\ref{thm:tauberianThm}. We leave the details to the reader.
\end{proof}

\begin{example}\label{example product of projective spaces}
    Consider the variety~$X_2(0)\simeq \PP^1\times \PP^1$ and~$L=3h+f$, so that~$\lambda_L=\frac{2}{3}$ and~$\mu_L=2$. Then, we get
   $$N(X,H_L,B)\sim C_{L,K}B^{2} \quad \text{as }B\to \infty,$$
   with~$C_{L,K}= \frac{R_Kh_K|\Delta_K|^{-1}}{w_K 2 \xi_K(2)}\Z_{\PP^{1}}(6)$. In the case~$K=\QQ$, a simple computation gives
   $$\Z_{\PP^1}(s)= 2+2\frac{\zeta(s/2)}{\zeta(s)}L_{-4}(s/2), \text{ where } L_{-4}(s):=\sum_{n=1}^{\infty}\left(\frac{-4}{n}\right)n^{-s}.$$
   Hence, using that~$L_{-4}(3)=\frac{\pi^3}{32}$ (see e.g~\cite[p.~189]{Coh07}) and~$\zeta(6)=\frac{\pi^6}{945}$, we get
   $$C_{L,\QQ}=\frac{6}{\pi}\left(1+\frac{945\zeta(3)}{32\pi^3}\right)=4.09640530\ldots .$$ 
   Note that~$C_{L,\QQ}=C'+C''$ where~$C',C''$ are the constants appearing at the end of Example~\ref{example HK threefold} in the Introduction (because~$X_2(0)\simeq U'\sqcup F'$ in the notation used there).
\end{example}

\subsection{The anticanonical height - Proof of Theorem~\ref{main thm anticanonical}}\label{subsection proof of main thm anticanonical}

In the case~$\lambda=r+1$ and $\mu=(r+1)a_r+t-|\mathbf{a}|$ we get~$L=-K_X$ by Proposition~\ref{prop effective cone}(2), hence~$H_L$ is the anticanonial height function~$H=H_{-K_X}$. Moreover, $\lambda_L=\mu_L=1$ according to~\eqref{eq lambda_D and mu_D}. When~$a_r>0$, Theorem~\ref{thm:generalVersionTheorem} gives the asymptotic formula
$$N(U,H,B)\sim C B\log(B) \quad \text{as }B\to \infty,$$
with~$C$ given by~\eqref{eq constant main thm anticanonical}. Assume~$a_r=0$. Then~$X\simeq \PP^{t-1}\times \PP^r$ and~$F\simeq \PP^{t-1}\times \PP^{r-1}$ if~$r\geq 2$, while~$F\simeq \PP^{t-1}$ if~$r=1$. By Lemmas~\ref{lem:restrictionBigDivisor} and~\ref{lem:restrictionBigDivisorII}, together with Theorem~\ref{thm:degenerate_case_general_theorem} and Corollary~\ref{cor:SchanuelTheorem}, we have
\begin{equation*}
    \begin{split}
    N(X,H,B) &\sim CB\log(B),\\
    N(F,H,B) &\sim C_rB 
    \end{split}
\end{equation*}
as~$B\to \infty$, with the same~$C$ as before and~$C_r$ another explicit constant\footnote{The exact value is~$C_r=\frac{R_Kh_K|\Delta_K|^{-\frac{t}{2}}}{w_K t \xi_K(t)}\Z_{\PP^{r-1}}\left(r+1\right)$ if~$r\geq 2$, and~$\frac{R_Kh_K|\Delta_K|^{-\frac{t}{2}}}{w_K t \xi_K(t)}$ if~$r=1$.}. In any case, this implies
$$N(U,H,B)=N(X,H,B)-N(F,H,B)\sim CB\log(B) \quad \text{as }B\to \infty.$$
This proves Theorem~\ref{main thm anticanonical}.

\subsection{Accumulation of rational points}\label{sec accumulation of rational points}

In the literature, there are different notions that capture the idea of subvarieties having too many rational points. Since our aim in this paper is to give explicit asymptotic formulas that allow for quantitative comparisons, we introduce the following relative notion of subvarieties accumulating more rational points than others.

\begin{defi}
    Let~$Y_1,Y_2\subseteq X$ be two subvarieties of a Hirzebruch--Kleinschmidt variety~$X$, and let~$L\in \Pic(X)$ be a big line bundle class. Assume that~$\#Y_1(K)=\#Y_2(K)=\infty$ and that~$N(Y_1,H_L,B)$ and~$N(Y_2,H_L,B)$ are both finite for every~$B>0$. Then, we say that~$Y_1$ \emph{strongly accumulates more rational points of bounded~$H_L$-height than~$Y_2$} if
    \begin{equation}\label{eq strongly accumulates more}
    \lim_{B\to \infty}\frac{N(Y_2,H_L,B)}{N(Y_1,H_L,B)}=0.\end{equation}
\end{defi}

Theorems~\ref{thm:generalVersionTheorem} and~\ref{thm:degenerate_case_general_theorem} lead to the following corollary.

\begin{cor}\label{cor accumulating subvarieties}
    Let $X=X_d(a_1,\ldots,a_r)$ be a Hirzebruch--Kleinschmidt variety over the number field~$K$ with~$a_r>0$ and good open subset~$U=U_d(a_1,\ldots,a_r)$, and let~$L=\lambda h+\mu f$ be a big line bundle class in~$\Pic(X)$. Then, the following properties hold:
    \begin{enumerate}
        \item If~$r>1$, assume~$\mu>\lambda (a_r-a_{r-1})$ so that~$L|_F$ is big on the subvariety~$F\simeq X_{d-1}(a_1,\ldots,a_{r-1})$ of~$X$. Then, the the good open subset~$U'\simeq U_{d-1}(a_1,\ldots,a_{r-1})$ of~$F$ strongly accumulates more rational points of bounded~$H_L$-height than~$U$ if and only if
        \begin{equation}\label{eq accumulating criterion}
        \max\{\lambda_L,\mu_L\}<\mu_{L|_F},
        \end{equation}
        where~$\mu_{L|_F}=\frac{ra_{r-1}+t-|\mathbf{a}|+a_r}{\mu-\lambda  (a_r-a_{r-1})}$.
        \item If~$r=1$, assume~$\mu>\lambda a_1$ so that~$L|_F$ is big on~$F\simeq \PP^{t-1}$. Then, $F$ strongly accumulates more rational points of bounded~$H_L$-height than~$U$ if and only if~\eqref{eq accumulating criterion} holds, where~$\mu_{L|_F}=\frac{t}{\mu-a_1}$.
    \end{enumerate}
\end{cor}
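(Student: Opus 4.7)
The plan is to derive asymptotic formulas for both $N(U,H_L,B)$ and $N(U',H_L,B)$ (resp.~$N(F,H_L,B)$ in case~(2)) via the main theorems of this section, and then compare their orders of growth. Since the standard adelic metric on~$L$ is compatible with restriction to closed subvarieties, $\overline{L_P}=\overline{(L|_F)_P}$ for every $P\in F(K)$, so $H_L$ restricted to $F(K)$ coincides with $H_{L|_F}$ and therefore $N(U',H_L,B)=N(U',H_{L|_F},B)$ (and analogously in case~(2)).

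For case~(1), Lemma~\ref{lem:restrictionBigDivisor} gives $L|_F=\lambda h'+(\mu-\lambda(a_r-a_{r-1}))f'$ in~$\Pic(X')$ with $X':=X_{d-1}(a_1,\ldots,a_{r-1})$. A direct computation using~\eqref{eq lambda_D and mu_D} applied to~$X'$ (which has parameters $r'=r-1$ and $t'=t$) yields
\[
\lambda_{L|_F}=\frac{r}{\lambda}\qquad\text{and}\qquad \mu_{L|_F}=\frac{ra_{r-1}+t-|\mathbf{a}|+a_r}{\mu-\lambda(a_r-a_{r-1})},
\]
which matches the formula in the statement. Applying Theorem~\ref{thm:generalVersionTheorem} to $(X,L)$ and $(X',L|_F)$ (and using Theorem~\ref{thm:degenerate_case_general_theorem} together with Corollary~\ref{cor:SchanuelTheorem} to handle the degenerate subcase $a_{r-1}=0$) produces asymptotic formulas of the shape $N(U,H_L,B)\sim C\, B^{a(L)}(\log B)^{b(L)-1}$ and $N(U',H_L,B)\sim C'\, B^{a(L|_F)}(\log B)^{b(L|_F)-1}$ with explicit positive constants.

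The ratio $N(U,H_L,B)/N(U',H_L,B)$ tends to zero if and only if either $a(L)<a(L|_F)$, or the two exponents coincide and $b(L)<b(L|_F)$. The key observation is that $\lambda_{L|_F}=r/\lambda<(r+1)/\lambda=\lambda_L\leq a(L)$, so $a(L|_F)=\max\{\lambda_{L|_F},\mu_{L|_F}\}$ exceeds $a(L)$ if and only if $\mu_{L|_F}>a(L)=\max\{\lambda_L,\mu_L\}$, which is precisely~\eqref{eq accumulating criterion}. The tied-exponent subcase cannot contribute: if $a(L)=a(L|_F)$ then necessarily $a(L|_F)=\mu_{L|_F}>\lambda_{L|_F}$, forcing $b(L|_F)=1\leq b(L)$. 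Case~(2) proceeds analogously, using Lemma~\ref{lem:restrictionBigDivisorII} to identify $L|_F$ with $\CO_{\PP^{t-1}}(\mu-a_1\lambda)$ and Schanuel's estimate to obtain $N(F,H_L,B)\sim C''\, B^{\mu_{L|_F}}$, after which the same comparison applies.

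The principal technical points are the compatibility $H_L|_{F(K)}=H_{L|_F}$, which follows from the construction of the standard adelic metric, together with the careful treatment of the subcase $a_{r-1}=0$, where $X'$ is a product of projective spaces and one must combine Theorem~\ref{thm:degenerate_case_general_theorem} with a separate asymptotic for the appropriate embedded subbundle of~$X'$ in order to extract the leading term of $N(U',H_{L|_F},B)$ rather than merely of $N(X',H_{L|_F},B)$.
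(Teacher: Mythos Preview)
Your proof is correct and follows essentially the same approach as the paper: both arguments apply the main counting theorems to obtain asymptotics for $N(U,H_L,B)$ and $N(U',H_L,B)$ (resp.\ $N(F,H_L,B)$), then reduce the comparison to~\eqref{eq accumulating criterion} via the key observation $\lambda_{L|_F}=r/\lambda<(r+1)/\lambda=\lambda_L\leq a(L)$, which rules out the tied-exponent case. You are in fact slightly more careful than the paper in flagging the degenerate subcase $a_{r-1}=0$ (where Theorem~\ref{thm:generalVersionTheorem} does not apply to $X'$ directly and one must pass through Theorem~\ref{thm:degenerate_case_general_theorem}) and in making explicit the compatibility $H_L|_{F(K)}=H_{L|_F}$.
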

\begin{proof}
    Assume~$r>1$ and~$\mu>\lambda (a_r-a_{r-1})$. By Theorem~\ref{thm:generalVersionTheorem} the good open subset~$U'$ of~$F$ strongly accumulates more rational points of bounded~$H_L$-height than~$U$ if and only if
    $$\max\{\lambda_L,\mu_L\}<\max\{\lambda_{L|_F},\mu_{L|_F}\}, \text{ or }\max\{\lambda_L,\mu_L\}=\lambda_{L|_F}=\mu_{L|_F} \text{ and }\lambda_L\neq \mu_L.$$
    Since~$\lambda_{L|_F}=\frac{r}{\lambda}<\frac{r+1}{\lambda}=\lambda_L$, we see that the second case cannot occur. This proves~$(1)$.
    
    Now, if~$r=1$ and~$\mu>\lambda a_1$, then by Lemma~\ref{lem:restrictionBigDivisorII} the height function~$H_L$ restricted to~$F$ corresponds to the power~$H_{\PP^{t-1}}^{\mu-\lambda a_1}$ of the standard height function of~$\PP^{t-1}$. By Theorems~\ref{thm:generalVersionTheorem} and Corollary~\ref{cor:SchanuelTheorem} we see that~$F$ strongly accumulates more rational points of bounded~$H_L$-height than~$U$ if and only if~\eqref{eq accumulating criterion} holds. This completes the proof of the corollary.
\end{proof}

\begin{remark}
One can also define a weaker relative notion of subvarieties accumulating more rational points than others, by replacing condition~\eqref{eq strongly accumulates more} with
$$0<\limsup_{B\to \infty}\frac{N(Y_2,H_L,B)}{N(Y_1,H_L,B)}<1.$$
Then, in all possible cases, one can use the explicit formulas in Theorems~\ref{thm:generalVersionTheorem}, \ref{thm:degenerate_case_general_theorem} and Corollary~\ref{cor:SchanuelTheorem} to decide when~$U'$ (resp.~$F$) weakly accumulates more rational points of bounded~$H_L$-height than~$U$. For instance, in the case~$(a_1,a_2)=(0,1)$ of Example~\ref{example HK threefold}, we saw that~$U'$ weakly accumulates more rational points of bounded anticanonical height than~$F'$. 
\end{remark}

\section{Example: Hirzebruch surfaces}\label{subsection hirzebruch surfaces}

For an integer~$a>0$ consider the \emph{Hirzebruch surface}
$$X=X_2(a)=\PP(\CO_{\PP^1}\oplus \CO_{\PP^1}(-a)),$$
which we consider as a variety over~$K=\QQ$ for simplicity. In the basis~$\{h,f\}$ of~$\Pic(X)$ given in Proposition~\ref{prop effective cone}, choose a big line bundle class~$L=\lambda h+\mu f$. We then have
$$\lambda_L=\frac{2}{\lambda}, \quad \mu_L=\frac{a+2}{\mu}.$$
We write~$U=U_2(a)$ and~$F=\PP(\CO_{\PP^1}(-a))$, so that
$$X=U\sqcup F\simeq U_2(a)\sqcup \PP^1.$$
Using that~$\xi_{\mathbb{Q}}(s)=(2\pi^{s/2})^{-1}\Gamma(s/2)\zeta(s)$ and~$\zeta(2)=\frac{\pi^2}{6}$, we get by Theorem \ref{thm:generalVersionTheorem} the asymptotic formula
      $$
	N(U,H_L, B)\sim C_L \left\lbrace \begin{array}{ll}
	  B^{\lambda_L}\log(B),	& \textup{if }\lambda_L=\mu_L,  \\
  B^{\max\{\lambda_L,\mu_L\}},	& \textup{if } \lambda_L\neq \mu_L,  \\
	\end{array}\right.$$
 as~$B\to \infty$, where
 \begin{equation}\label{eq C_L Hirzebruch surface}
 C_L=\left\{
     \begin{array}{ll}
      \frac{18}{\pi^2 \mu}    &  \text{if }\lambda_L=\mu_L, \\
       \frac{3}{\pi} \Z_{\mathbb{P}^{1}}\left( \mu \lambda_L-a\right)   & \text{if }\lambda_L>\mu_L,\\
          \frac{6\xi_{\mathbb{Q}}(\lambda \mu_L-1)}{\pi (a+2) \xi_{\mathbb{Q}}(\lambda \mu_L)}
          & \text{if }\lambda_L<\mu_L.
     \end{array}\right.
     \end{equation}
     Now, the restriction~$L|_F$ of the line bundle class~$L$ to~$F \simeq \PP^1$ is big if and only if~$\mu>\lambda a$. In this case, by Corollary~\ref{cor:SchanuelTheorem}, we have
     $$N(F,H_L,B)\sim \frac{3}{\pi}B^{\frac{2}{\mu-\lambda a}} \quad \text{as }B\to \infty.$$
 As in Corollary~\ref{cor accumulating subvarieties}(2), we have that~$F$ strongly accumulates more points of bounded $H_L$-height than~$U$ if and only if 
 $$\max\left\{ \frac{2}{\lambda},\frac{a+2}{\mu}\right\}<\frac{2}{\mu-\lambda a},$$
 which is easily seen to be equivalent
 $$\lambda a<\mu< \lambda (a+1).$$

Finally, we turn our attention to the numerical value of the constant~$C_L$ in~\eqref{eq C_L Hirzebruch surface}. Assuming~$a=1$ for simplicity, we get the following first values of~$C_L$ depending on the choice of~$L=\lambda h+\mu f$. Here, as in Example~\ref{example product of projective spaces}, we use the Dirichlet $L$-function~$L_{-4}(s):=\sum_{n=1}^{\infty}\left(\frac{-4}{n}\right)n^{-s}$.

\begin{center}
\renewcommand{\arraystretch}{1.5}
         \begin{tabular}{|c|c|c|c|}
         \hline 
         $\lambda$ & $\mu $ &  {\bf Case}  & $C_L$\\
         \hline 
         $1$ & $1$ & $\lambda_L < \mu_L$  &  $\frac{2\xi_{\QQ}(2)}{\pi \xi_{\QQ}(3)}=\frac{2 \pi}{3 \zeta(3)}=1.74234272\ldots $   \\ 
          \hline
         $1$ & $2$ & $\lambda_L > \mu_L$  & $\frac{3}{\pi}\Z_{\PP^1}(3)=\frac{6}{\pi}\left(1+\frac{ \zeta(3/2) L_{-4}(3/2) }{\zeta(3)}\right)=5.49807267\ldots$   \\ 
         \hline
         $1$ & $3$ & $\lambda_L > \mu_L$ &   $\frac{3}{\pi}\Z_{\PP^1}(5)=\frac{6}{\pi}\left(1+\frac{ \zeta(5/2) L_{-4}(5/2) }{\zeta(5)}\right)=4.25372490\ldots$   \\ 
         \hline 
           $2$ & $1$ &  $\lambda_L < \mu_L$  &   $\frac{2\xi_{\QQ}(5)}{\pi \xi_{\QQ}(6)}=\frac{2835 \zeta(5)}{4 \pi^6}=0.76443811\ldots$   \\ 
          \hline
         $2$ & $2$ & $\lambda_L < \mu_L$ &   $\frac{2\xi_{\QQ}(2)}{\pi \xi_{\QQ}(3)}=\frac{2 \pi}{3 \zeta(3)}=1.74234272\ldots\ldots $  \\ 
         \hline
         $2$ & $3$ & $\lambda_L = \mu_L$ &  $\frac{6}{\pi^2}=0.60792710\ldots $   \\
         \hline
         $3$ & $1$ & $\lambda_L < \mu_L$  &    $\frac{2\xi_{\QQ}(8)}{\pi \xi_{\QQ}(9)}=\frac{32 \pi^7}{165375 \zeta(9)}=0.58325419\ldots $  \\ 
          \hline
         $3$ & $2$ & $\lambda_L < \mu_L$  & $\frac{2\xi_{\QQ}(7/2)}{\pi \xi_{\QQ}(9/2)}=\frac{2 \zeta(7/2) \Gamma(7/4)}{\sqrt{\pi} \zeta(9/2) \Gamma(9/4)} =0.97781868\ldots$     \\ 
         \hline
         $3$ & $3$ & $\lambda_L < \mu_L$ &   $\frac{2\xi_{\QQ}(2)}{\pi \xi_{\QQ}(3)}=\frac{2 \pi}{3 \zeta(3)}=1.74234272\ldots\ldots $  \\ 
         \hline
                 \end{tabular}
                 \renewcommand{\arraystretch}{1}
                 \end{center}

\begin{remark}\label{remark Serre and BM}
The case~$X_2(1)= \PP(\CO_{\PP^1}\oplus \CO_{\PP^1}(-1))$ was already studied by Serre (see~\cite[Section~2.12]{Serre1989}), and revisited by Batyrev and Manin in~\cite[Section~1.6]{Bat/Man90} and by Peyre in~\cite[\emph{Proposition}~2.7]{Pey02}. Following the notation in~\cite{Pey02}, one realizes~$X_2(1)$ as the variety
    $$V=\left\{([y_0,y_1,y_2],[z_0,z_1])\in \PP^2\times \PP^1:y_0z_1=y_1z_0\right\}.$$
    Then, for integers~$r,s$, the height function~$H_{r,s}$ on~$V(\QQ)$ defined by
    $$H_{r,s}(([y_0,y_1,y_2],[z_0,z_1])):=\sqrt{y_0^2+y_1^2+y_2^2}^{r+s}\sqrt{z_0^2+z_1^2}^{-s},$$
    for~$(y_0,y_1,y_2)\in \ZZ^3$ and~$(z_0,z_1)\in \ZZ^2$ primitive, corresponds to our height function~$H_L$ with~$L=(r+s)h+rf$. Moreover, the divisor~$E\subset V$ defined in loc.~cit.~by~$y_0=y_1=0$ corresponds to our subbundle~$F$. In the particular case of~$r=1$ and~$s=0$, corresponding to~$\lambda=\mu=1$, we see that
    \begin{equation*}
       \begin{split}
           N(V\setminus E,H_{1,0},B) &\sim \frac{1}{2}\#\left\{(y_0,y_1,y_2)\in \ZZ^3 \text{ primitive, such that }\sqrt{y_0^2+y_1^2+y_2^2}\leq B\right\}\\
           &\sim \frac{2\pi}{3\zeta(3)}B^3 \quad \text{as }B\to \infty
       \end{split} 
    \end{equation*}
(see, e.g.~\cite{CC07}). This matches our computations, since in the case~$\lambda=\mu=1$ we get
$$N(U,H_L,B)\sim C_LB^3, \quad C_L=\frac{2\xi_{\QQ}(2)}{\pi \xi_{\QQ}(3)}=\frac{2\pi}{3\zeta(3)}.$$
\end{remark}

	\bibliographystyle{alpha}
	\bibliography{biblio}
	
\end{document}